%
%
\documentclass[10pt]{article}
\usepackage{hyperref}
\usepackage{amssymb}
\usepackage{amsmath}
\input{liemacs10.sty} 
\addtolength\textwidth{3cm}
\addtolength\textheight{1cm}
\addtolength\oddsidemargin{-2cm}
\addtolength\evensidemargin{-2cm}

\usepackage{color}


\newcommand{\regexp}{\mathop{{\rm regexp}}\nolimits}

\newcommand{\Cont}{\mathop{{\rm Cont}}\nolimits}

\newcommand{\rank}{\mathop{{\rm rank}}\nolimits}

\newcommand{\comp}{\mathop{{\rm comp}}\nolimits}

\newcommand{\Conj}{\mathop{{\rm Conj}}\nolimits}

\renewcommand{\phi}{\varphi}

\renewcommand\mlabel{\label} 

\begin{document}

\title{Elliptic domains in Lie groups}
\author{ Jakob Hedicke\begin{footnote}
    {Centre de recherches mathématiques (CRM), Université de Montréal, 2920 Chemin de la tour, Montréal (Québec), H3T 1J4, Canada; jakob.hedicke@gmail.com  }\end{footnote}\, and Karl-Hermann Neeb
  \begin{footnote}
    {Department Mathematik, Friedrich-Alexander-Universit\"at Erlangen-N\"urnberg, Cauerstrasse 11, 91058 Erlangen, Germany; neeb@math.fau.de  }\end{footnote}
}
\date{}

\maketitle

\begin{abstract} An element $g$ of a Lie group
  is called stably elliptic if it is contained in the interior
  of the set $G^e$ of elliptic elements, characterized by
  the property that $\Ad(g)$ generates a relatively compact subgroup. 
  Stably elliptic elements appear naturally in the geometry of
  causal symmetric spaces and in representation theory. 
  We characterize stably elliptic elements in terms
  of the fixed point algebra of 
  $\Ad(g)$ and show that the connected components of the set
  $G^{se}$ of stably elliptic elements can be described in terms
  of the Weyl group action on a compactly embedded Cartan subalgebra.

In the case of simple hermitian Lie groups we relate stably elliptic elements to maximal invariant cones and the associated subsemigroups.
In particular we show that the basic connected component $G^{se}(0)$
can be characterized in terms of the compactness of order intervals 
and that $G^{se}(0)$ is globally hyperbolic with respect to
the induced biinvariant causal structure.\\
{\bf Keywords:} elliptic element of a Lie group, elliptic domain in
a Lie group,
globally hyperbolic manifold, causal structrue, invariant cone,
Guichardet--Wigner quasimorphism.  \\
{\bf MSC:}  22E15, 53C35, 53C50. 
\end{abstract}

\tableofcontents

\section{Introduction}

Let $G$ be a connected Lie group with Lie algebra~$\g$.
We call an element $g \in G$ {\it elliptic}, or {\it compactly embedded}, if
the closed subgroup of $\Aut(\g)$ generated by $\Ad(g)$ is compact.
The set of elliptic elements is denoted $G^e$ and its interior,
the set of {\it stably elliptic elements}, by $G^{se}$.
We show that $G^{se} \not=\eset$ if and only if
$\g$ possesses a Cartan subalgebra
which is compactly embedded in the sense that
its adjoint image generates a relatively compact group.
This can be checked easily. We study the structure of the
connected components of $G^{se}$, the so-called {\it elliptic domains}.
Our approach is based on 
Theorem~\ref{thm:compemb}, which characterizes stably elliptic elements
$g \in G$ as those for which the fixed point algebra
$\g^{\Ad(g)} = \Fix(\Ad(g))$ is  compactly embedded. 
As for the enumeration of the connected components of $G^{se}$,
we show that they are in one-to-one correspondence
with Weyl group orbits of connected components of
$T^{se} := T \cap G^{se}$, where $T = \exp \ft$ for a
compactly embedded Cartan subalgebra (Theorem~\ref{thm:conncomp}).

For simple hermitian Lie algebras $\g$, we connect in
Section~\ref{sec:4} these concepts to
maximal invariant cones $W$ and the associated subsemigroups
$S_W = \oline{\la \exp W \ra}$. We show that
the basic connected component $G^{se}(0)$
is contained in $S_W$ and can be characterized
in terms of compactness of order intervals if $S_W \not=G$,
but in all cases it carries a biinvariant causal structure
for which it is globally hyperbolic, i.e., the corresponding
causal intervals are compact.

We expect these results on elliptic domains in general 
Lie groups to  be useful in a variety of contexts.
The present investigation was mainly motivated by
the theory of causal symmetric spaces and their
connections with crown domains (\cite{GK02}, \cite[\S 6.1]{NO23}).
In this context the duality between a causal symmetric space 
of the form $G_\C/G$ and the group $G$, endowed with a biinvariant
causal structure, naturally lead to elliptic domains in $G$
that arise as intersections of complex tube domains 
in $G_\C$ with the real group $G$.
It also provides a tool to study
domains in homogeneous spaces for which all
stabilizer Lie algebras are compactly embedded (see \cite{GK02}
fore related conditions in the context of crown domains).
For the conjugation action of $G$ on itself, 
Theorem~\ref{thm:compemb} characterizes elements in $G^{se}$
as those with compactly embedded stabilizer Lie algebras.

In the representation theory  of semisimple Lie groups,
elliptic domains appear naturally in the theory of discrete
series representations and their characters \cite{HC65}.
Here the condition $G^{se} \not=\eset$  is equivalent to the
existence of (relative) discrete series representations.
Stably elliptic elements in the linear symplectic group are of importance in the study of periodic Hamiltonian systems and their stability and can be characterised in terms of Krein theory, see e.g.~\cite{Ek89}.

We now describe our results and the structure of this paper in some
more detail. In Section~\ref{sec:1} we collect some observations on the
set $G^e$ of elliptic elements of a connected Lie group~$G$.
We are mostly interested in its interior $G^{se}$, the set of
stably elliptic elements. In Proposition~\ref{prop:c.1ax} we
show that this set is non-empty if and only if
$\g$ contains a compactly embedded
Cartan subalgebra. A key result is Theorem~\ref{thm:compemb}
which characterizes stably elliptic elements as those
elliptic elements whose centralizer is contained in~$G^e$. 
This is complemented by Theorem~\ref{thm:compchar}
which characterizes compact elements in $\g$ as those for which
the corresponding conjugation flow on the simply
connected group $\tilde G$ with Lie algebra $\g$
has only relatively compact orbits.  We also obtain a
natural polar decomposition of $G^{se} \cong \R^N \times K^{se}$
(Proposition~\ref{prop:2.11})
reducing many questions to the subset $K^{se} := K \cap G^{se}$,
where $K = \la \exp \fk\ra$ for a maximal compactly embedded
subalgebra $\fk$. 

In Section~\ref{sec:2} we address the description of the
connected components of the open subset~$G^{se}$ 
of stably elliptic elements.
As $G$ is connected, all connected components are
invariant under conjugation. Using the fact that
$\g$ has a compactly embedded Cartan subalgebra $\ft$
(Proposition~\ref{prop:c.1ax}), we can use
the root decomposition with respect to $\ft$ to 
label connected components of  $G^{se}$ by $\Z$-valued functions
on the set of non-compact roots (Theorem~\ref{thm:conncomp}).

In Section~\ref{sec:3} we specialize to the case where $\g$ is a simple 
real Lie algebra which is {\it hermitian}, i.e.,
any  maximal compactly embedded subalgebra $\fk$ has non-zero
center $\fz(\fk)$ (\cite{Hel78}, \cite{Ne99}). 
Here we are mostly interested in the ``basic'' connected component
$G^{se}(0)$. Proposition~\ref{prop:3.11} shows that
this domain ``does not depend on $G$'', in the sense that
$\exp \: \g^{se}(0) \to G^{se}(0)$ is always a diffeomorphism. 
Further, the Properness Theorem~\ref{properness:theorem}
provides important information
on the polar decomposition of $G^{se}(0)$ 
that will be used in Section~\ref{sec:6} below. 

In Section~\ref{sec:4} we turn to the relation
between   elliptic domains with invariant cones in a
  simple hermitian Lie algebra $\g$. 
  Recall that a
  simple real Lie algebra $\g$ possesses a pointed generating closed
  convex cone $W \subeq \g$ invariant under $\Ad(G)$ if and only
  if $\g$ is hermitian (\cite{HHL89, Ne99}). In this case there exists a
  maximal such cone $W^{\rm max}_\g$, unique up to sign, with the
  property that any other pointed generating invariant cone $W$
  is either contained in $ W^{\rm max}_\g$ or $- W^{\rm max}_\g$. Here
  we are interested only in the maximal cone $W := W_\g^{\rm max}$
  and the corresponding
  biinvariant causal structure on $G$, defined by the cone field
  $(g.W)_{g \in G}$. If $G$ is simply connected,
  then this causal structure corresponds to a global order on
  $G$ if and only if $\g$ is of {\it tube type},
    i.e., if the corresponding Riemannian symmetric 
space $G/K$ is biholomorphic to a tube domain $T_\Omega = \Omega + i V$, 
where $\Omega$ is an open symmetric cone in the real vector
space~$V$ 
(\cite{Ol82}, \cite{Ne90}, \cite[Thm.~VIII.12]{Ne93}).
Then $S_W = \oline{\la \exp W \ra} \subeq G$ is a proper closed
subsemigroup. 
The main result of this section is Theorem~\ref{thm:4.x} which
characterizes $G^{se}(0)$ as the interior of the set
$\comp(S_W)$ of all those elements $s \in S_W$ for which the order interval
$S_W \cap s S_W^{-1}$ is compact. As a consequence, we obtain
that the causal structure on $G^{se}(0)$ is globally
hyperbolic (Corollary~\ref{cor:globhyp1}).

In Section~\ref{sec:6} we show with different methods
that the biinvariant causal
structure on $G^{se}(0)$ is also globally hyperbolic
if the hermitian Lie algebra $\g$ is not of tube type. 
Then the subsemigroup $S_W$ coincides with the
whole group, so that we cannot refer to the global
order structure corresponding to the biinvariant
cone field $(g.W)_{g \in G}$. The methods we use in this case
have been adapted from \cite{He22}, where a similar result was shown
for the symplectic group $G = \Sp_{2n}(\R)$.

\vspace{5mm}

\nin {\bf Acknowledgment:} We thank Tobias Hartnick
for helping us with some background in quasimorphisms.
We also thank Alain Valette for helpful discussions
concerning Example~2.12. J.H. is supported by funding from the Fondation Courtois and  K.-H.N. acknowledges support by DFG-grant NE 413/10-2.\\

\nin {\bf Notation:} 
\begin{itemize}
\item $Z_G(h) = \{ g \in G \: g h = h g\}$ denotes the centralizer
  of $h \in G$ in the group $G$. For subsets $S \subeq G$ we write
  $Z_G(S) := \bigcap_{h \in S} Z_G(h)$.
  The normalizer of $S$ is $N_G(S) = \{ g \in G \: g S g^{-1} = S\}$. 
\item $Z_G(x) = \{ g \in G \: \Ad(g)x = x\}$ denotes the centralizer 
  of $x \in \g$ in the group $G$ and likewise
  $Z_G(S)$ for subsets $S \subeq \g$.
\item For a subset $S \subeq \g$ we write
  $\fz_\g(S) := \{ x \in \g \:  [x,S] = \{0\}\}$ for its
  centralizer in $\g$. 
\item $e$ denotes the neutral element in $G$ and
  $G_e$ the identity component of the Lie group $G$.
\item $\L(G)$ or $\g$ denotes the Lie algebra of $G$.   
\item $\fk$ will always denote a maximal compactly embedded subalgebra
  of the Lie algebra $\g$ and $\ft \subeq \fk$ a maximal abelian subalgebra,
  so that $\ft$ is maximal abelian compactly embedded in $\g$.
   Further, $\fp$ denotes a $\fk$-invariant complement. 
\item For a Lie algebra $\g$, we write
  $\Inn(\g) = \la e^{\ad \g} \ra \subeq \Aut(\g)$ for the group
  of inner automorphisms. 
\end{itemize}

\section{Elliptic elements of Lie groups} 
\mlabel{sec:1}

In this section we take a closer look at the
set $G^e$ of elliptic elements of a connected Lie group~$G$.
We are mostly interesting in its interior $G^{se}$, the set of
stably elliptic elements. In Proposition~\ref{prop:c.1ax} we
shall see that this set is non-empty if and only if
the Lie algebra $\g$ contains a compactly embedded
Cartan subalgebra. A key result is Theorem~\ref{thm:compemb}
which characterizes stably elliptic elements as those
elliptic elements whose centralizer consists of elliptic elements.
This is complemented by Theorem~\ref{thm:compchar}
that characterizes compact elements in $\g$ as those for which
the corresponding conjugation flow on the simply
connected group $\tilde G$ with Lie algebra $\g$
has compact orbit closures.
It is also interesting to observe that the set $G^{se}$
has a natural polar decomposition $G^{se} \cong  \R^N \times K^{se}$
(Proposition~\ref{prop:2.11})
reducting many questions to the subset $K^{se} = K \cap G^{se}$,
where $K = \la \exp \fk\ra$ for a maximal compactly embedded
subalgebra $\fk$.

\begin{defn} \mlabel{def:1.1} (cf.\ \cite{HH89})
  (a) We call an element $x$ of a Lie algebra $\g$
  {\it compact/elliptic}
  if $\oline{\exp(\R \ad x)}$ is a compact subgroup of $\Aut(\g)$, i.e.,
if $\ad x$ is semisimple with purely imaginary spectrum. 
A~subalgebra $\fh \subeq \g$ is called {\it compactly embedded} if
$\oline{\la \exp(\ad \fh) \ra}$ is a compact subgroup of $\Aut(\g)$.
By \cite[Lemma~VII.1.5]{Ne99}, this is equivalent to
$\fh \subeq \comp(\g)$. 

\nin (b) Let $G$ be a connected Lie group. 
We call an element $g \in G$ {\it compact} if the cyclic subgroup 
$g^\Z \subeq G$ is relatively compact.

We likewise call $g \in G$ {\it compactly embedded} or
{\it elliptic} if $\Ad(g)$ is compact 
in $\Aut(\g)$, and write~$G^e$ for the set of
elliptic elements. Elements in the  interior of this set are
called {\it stably elliptic} and we write
\begin{equation}
  \label{eq:gse}
  G^{se} := (G^e)^\circ
\end{equation}
for this set.
We also put
\begin{equation}
  \label{eq:gse2}
  \g^{se} :=  \exp^{-1}(G^{se})
\end{equation}
and note that $\g^{se}$ is open in $\g$ because $\exp$ is continuous.
\end{defn}

\begin{rem} To justify the notation $\g^{se}$, we have to verify
  that this set does not depend on the Lie group $G$.
  So let $q_G \:  \tilde G \to G$  denote the universal covering morphism.
  Then the definition of $G^e$ implies that
  $\tilde G^e = q_G^{-1}(G^e)$, so that 
  \[ \tilde G^{se} =  q_G^{-1}(G^{se}).\]
  This in turn shows that, for $x \in \g$, the conditions
  $\exp_{\tilde G}(x) \in \tilde G^{se}$ and 
  $\exp_G(x) \in G^{se}$ are equivalent, and thus $\g^{se}$ is independent of
  $G$, because all connected Lie groups with the same Lie algebra have
  isomorphic universal covering groups.  
\end{rem}

\subsection{Existence of stably elliptic elements} 

\begin{lem} \mlabel{lem:c.1a} 
Let $K \subeq G$ be the integral subgroup corresponding to a 
maximal compactly embedded subalgebra $\fk \subeq \g$.
Then the  following assertions hold: 
\begin{itemize}
\item[\rm(i)] $G^e Z(G) = G^e$. 
\item[\rm(ii)] $Z_G(zg) = Z_G(g)$ for $g \in G, z \in Z(G)$. 
\item[\rm(iii)] $G^e   = \bigcup_{g \in G} g K g^{-1}$. 
\item[\rm(iv)] $G^e = \exp(\comp(\g))$ and
  $\comp(\g) = \exp^{-1}(G^e).$ 
\item[\rm(v)]  $\g^{se} \subeq \comp(\g)^\circ
  = \{ x \in \comp(\g) \: \ker(\ad x)\ \mbox{ compactly embedded}\}$. 
\item[\rm(vi)] $G^{se} = \exp(\g^{se})$.
\end{itemize}
\end{lem}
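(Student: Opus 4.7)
I would address the six parts in the order (i), (ii), (iv), (iii), (v), (vi), since (iii), (v), (vi) all use (iv). Parts (i) and (ii) reduce immediately to $\Ad(zg)=\Ad(g)$ for $z\in Z(G)=\ker\Ad$: ellipticity depends only on the coset $gZ(G)$, giving (i), and $h(zg)=zhg$ while $(zg)h=zgh$, so $h$ commutes with $zg$ iff $h$ commutes with $g$, giving (ii).

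For (iv), the inclusion $\exp(\comp(\g))\subeq G^e$ is direct: if $x\in\comp(\g)$, then $\oline{\exp(\R\,\ad x)}$ is a compact subgroup of $\Aut(\g)$ containing $\Ad(\exp x)=\exp(\ad x)$, so $\exp x\in G^e$. Conversely, if $g=\exp y\in G^e$, then $\la\exp(\ad y)\ra=\la\Ad(g)\ra$ is relatively compact in $\GL(\g)$; writing $\ad y=S+N$ for its Jordan decomposition (commuting semisimple $S$ and nilpotent $N$), a non-trivial $N$ would give polynomial growth of $\exp(n\,\ad y)$ and any non-zero real part of an eigenvalue of $S$ would give exponential growth, so $N=0$ and $\spec(S)\subeq i\R$, i.e., $y\in\comp(\g)$. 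Part (iii) then follows by combining (iv) with the standard fact $\comp(\g)=\bigcup_{g\in G}\Ad(g)\fk$ (every compact element generates a compactly embedded abelian subalgebra, which sits inside a maximal compactly embedded subalgebra, and all such are $\Ad(G)$-conjugate to $\fk$): this gives $G^e=\exp\comp(\g)=\bigcup_g g\exp(\fk)g^{-1}\subeq \bigcup_g gKg^{-1}$, while the reverse inclusion is automatic since $\Ad(K)$ is compact, so every $k\in K$ is elliptic.

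For (v), the inclusion $\g^{se}\subeq\comp(\g)^\circ$ follows from (iv) together with openness of $\g^{se}$. For the equality $\comp(\g)^\circ=\{x\in\comp(\g):\ker(\ad x)$ compactly embedded$\}$ I would argue by spectral perturbation. If $y\in\fz_\g(x)$ is non-compact, then $\ad(x+\eps y)=\ad x+\eps\,\ad y$ is a sum of commuting operators, and on $\ker\ad x$ its spectrum equals $\eps\cdot\spec(\ad y|_{\ker\ad x})$, which contains a non-imaginary eigenvalue, so $x+\eps y\notin\comp(\g)$ for small $\eps\ne 0$ and $x\notin\comp(\g)^\circ$. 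Conversely, if $\fz_\g(x)=\ker\ad x$ is compactly embedded, split $\g=\ker\ad x\oplus\im\ad x$ (valid since $\ad x$ is semisimple), note that $\ad x|_{\im\ad x}$ has spectrum in $i\R\setminus\{0\}$ with a uniform gap from $0$, and conclude from continuity of the spectrum that small perturbations of $x$ remain elliptic. Part (vi) is then immediate: $\exp(\g^{se})\subeq G^{se}$ is tautological, and any $g\in G^{se}\subeq G^e=\exp\comp(\g)$ is of the form $g=\exp x$, which forces $x\in\exp^{-1}(G^{se})=\g^{se}$.

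The main obstacle is the converse direction in (v): showing that compactly embedded $\fz_\g(x)$ really forces $x\in\comp(\g)^\circ$. This requires combining the $\ad x$-equivariant splitting $\g=\ker\ad x\oplus\im\ad x$, a uniform spectral gap of $\ad x$ on $\im\ad x$, and continuity of the spectrum under small perturbations both along and transverse to $\fz_\g(x)$. The remaining parts are routine consequences of (iv) together with conjugacy of maximal compactly embedded subalgebras.
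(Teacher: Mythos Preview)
Your argument has a circularity between (iii) and (iv). In (iv) you establish $\exp(\comp(\g))\subeq G^e$ and, via the Jordan decomposition, that $\exp y\in G^e$ forces $y\in\comp(\g)$; together these give only the second equality $\comp(\g)=\exp^{-1}(G^e)$. The first equality $G^e=\exp(\comp(\g))$ additionally requires $G^e\subeq\exp(\g)$, i.e., that every elliptic element lies in the image of the exponential map---and this you never prove. You then invoke precisely this unproved inclusion in (iii) when writing ``$G^e=\exp\comp(\g)=\bigcup_g g\exp(\fk)g^{-1}$'', and again in (vi). The paper avoids this by reversing the order: it proves (iii) first, directly at the group level (an elliptic $g$ has $\Ad(g)^\Z$ contained in some maximal compact subgroup of $\oline{\Ad(G)}$, all of which are $\Ad(G)$-conjugate to $\oline{\Ad(K)}$, and $\Ad^{-1}(\oline{\Ad(K)})=K$), and only then derives $G^e\subeq\exp(\comp(\g))$ from (iii) together with surjectivity of $\exp\res_\fk\colon\fk\to K$.

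A second, smaller issue is your proposed converse in (v). Continuity of the spectrum only shows that eigenvalues of $\ad x'$ stay \emph{near} $i\R$ for $x'$ near $x$; it does not force them to remain \emph{in} $i\R$, nor is semisimplicity an open condition. The correct mechanism (which is what the reference the paper cites actually does) is a submersion argument: the map $G\times\fk\to\g$, $(g,y)\mapsto\Ad(g)y$, has image $\comp(\g)$, and its differential at $(e,x)$ has range $[\g,x]+\fk=\im(\ad x)+\fk$, which equals $\g$ precisely when $\ker(\ad x)\subeq\fk$. This yields openness of $\comp(\g)$ near such $x$ without any spectral perturbation estimate.
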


\begin{prf} (i) and (ii) are trivial. 

\nin (iii) Clearly, $G^e$ is conjugation invariant and contains $K$, 
hence all conjugates $gKg^{-1}$ for $g \in G$. 

Suppose, conversely, that $g \in G^e$. 
Then $\Ad(g)^\Z$ is contained in a maximal compact subgroup
$U \subeq \oline{\Ad(G)}$. Since
$\oline{\Ad(K)}$ is maximal compact in $\oline{\Ad(G)}$,
maximal compact subgroups are conjugate under inner automorphisms,
and $\oline{\Ad(G)} = \Ad(G) \oline{\Ad(K)}$
(\cite[Thm.~VII.1.4(iii),(iv)]{Ne99}, \cite{HH89}), we infer that there
exists an element $h \in G$ with
$hgh^{-1} \in \Ad^{-1}(\oline{\Ad(K)}) = K$
(see \cite[Thm.~VII.1.4(iii)]{Ne99}). This proves (iii).

\nin (iv) The inclusion
$\exp(\comp(\g)) \subeq G^e$ follows directly from the
definition. As the exponential function of $K$ is surjective
and $\comp(\g) =  \Ad(G)\fk$ (\cite[Thm.~VII.1.4]{Ne99}), we derive from (iii) that $G^e \subeq \exp(\comp(\g))$.

It remains to show that
$\exp x \in G^e$ implies that $x \in \comp(\g)$.
This follows from the compactness of the set 
\[ \oline{\Ad(\exp \R x)} 
\subeq \Ad(\exp [0,1]x) \oline{\Ad(\exp \Z x)},  \]
which is a product of two compact sets.

\nin (v) From (iv) we know that $\comp(\g) = \exp^{-1}(G^e)$,
so that the continuity of $\exp$ implies that
$\g^{se} = \exp^{-1}((G^e)^\circ) \subeq \comp(\g)^\circ$.
The equality follows from \cite[Lemma~VII.1.7]{Ne99}.

\nin (vi) As (iv) implies that $G^{se} \subeq \exp(\g)$, we have
  $G^{se} = \exp(\exp^{-1}(G^{se})) = \exp(\g^{se})$.
\end{prf}

\begin{prop} \mlabel{prop:c.1ax}  
  The following are equivalent:
  \begin{itemize}
  \item[\rm(a)] $G^{se} \not=\eset$.
  \item[\rm(b)]   $\g^{se} \not=\eset$. 
  \item[\rm(c)] $\comp(\g)$ has interior points.
  \item[\rm(d)] $\g$ contains a compactly embedded Cartan subalgebra.
  \end{itemize}
\end{prop}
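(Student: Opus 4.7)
The plan is to establish the four-way equivalence by running a cycle, drawing at every step on Lemma~\ref{lem:c.1a}. The equivalence $(a)\Leftrightarrow(b)$ is immediate from part~(vi) of that lemma, which records $G^{se}=\exp(\g^{se})$, so the two sets are simultaneously non-empty. The implication $(b)\Rightarrow(c)$ is contained verbatim in part~(v): $\g^{se}\subeq\comp(\g)^\circ$, so if the left side has a point, then the interior of $\comp(\g)$ is non-empty.

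For $(c)\Rightarrow(d)$, I would pick any $x\in\comp(\g)^\circ$. By the equality in Lemma~\ref{lem:c.1a}(v), the centralizer $\fz_\g(x)=\ker(\ad x)$ is compactly embedded in $\g$; since $x$ is compact, $\ad x$ is semisimple, so $x$ is a semisimple element of $\g$. I would then appeal to the structural fact that every Cartan subalgebra $\fh$ of the reductive subalgebra $\fz_\g(x)$ (which necessarily contains the central element $x$) is itself a Cartan subalgebra of the ambient algebra $\g$; such an $\fh$ inherits compact embedding from $\fz_\g(x)$ because $\oline{\la\exp(\ad \fh)\ra}\subeq\oline{\la\exp(\ad \fz_\g(x))\ra}$ is compact, yielding~(d).

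For $(d)\Rightarrow(b)$, let $\ft$ be a compactly embedded Cartan subalgebra. The set of $\g$-regular elements in $\ft$---those $x\in\ft$ with $\alpha(x)\ne 0$ for every root $\alpha$ of $(\g_\C,\ft_\C)$---is the complement of finitely many hyperplanes, hence dense in $\ft$. I choose such an $x$ small enough that $\exp_G$ is a diffeomorphism of an open neighborhood $U\subeq\g$ of $x$ onto its image in $G$. Regularity forces $\fz_\g(x)=\ft$, which is compactly embedded, so Lemma~\ref{lem:c.1a}(v) places $x$ in the open set $\comp(\g)^\circ$; after shrinking $U$ if necessary, $U\subeq\comp(\g)$, and Lemma~\ref{lem:c.1a}(iv) gives $\exp(U)\subeq\exp(\comp(\g))=G^e$. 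Hence $\exp(x)$ has an open $G$-neighborhood inside $G^e$, placing $\exp(x)$ in $G^{se}$ and $x$ in $\g^{se}$.

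The only non-bookkeeping step is the structural input used in $(c)\Rightarrow(d)$---the assertion that a Cartan $\fh$ of $\fz_\g(x)$ is also a Cartan of $\g$ when $x$ is semisimple. This is standard and can be verified in one line by self-centralization: $\fz_\g(\fh)\subeq\fz_\g(x)$ because $x\in\fh$, and inside $\fz_\g(x)$ the Cartan $\fh$ is self-centralizing, so $\fz_\g(\fh)=\fh$. Everything else is an assembly of the exponential bookkeeping already supplied by Lemma~\ref{lem:c.1a}.
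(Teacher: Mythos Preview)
Your proof is correct and follows the paper's argument closely: both run the cycle through Lemma~\ref{lem:c.1a}, with $(d)\Rightarrow(b)$ obtained by choosing a regular element of $\ft$ inside a small exponential chart. The only difference is that the paper cites \cite[Thm.~VII.1.8(i)]{Ne99} for $(c)\Leftrightarrow(d)$, whereas you give a direct argument for $(c)\Rightarrow(d)$; your ``one-line'' verification that $\fh$ is a Cartan of $\g$ tacitly uses that $\fh\subeq\fz_\g(x)\subeq\comp(\g)$ consists of ad-semisimple elements, so that the self-centralization $\fz_\g(\fh)=\fh$ indeed upgrades to self-normalization.
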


\begin{prf} (a) $\Leftrightarrow$ (b): Lemma~\ref{lem:c.1a}(iv) 
shows that $G^{se} \not=\eset$ implies that
$\g^{se} = \exp^{-1}(G^{se}) \not=\eset$
and the converse follows directly from the definition.

\nin   (b) $\Rarrow$ (c): From Lemma~\ref{lem:c.1a}(v)
  we know that, if $\g^{se} \not=\eset$, then
  $\comp(\g)$ has interior points.

\nin (c) $\Leftrightarrow$ (d):   
By \cite[Thm.~VII.1.8(i)]{Ne99}, (c) is is equivalent to the existence of a
compactly embedded Cartan subalgebra.

\nin (d) $\Rarrow$ (b): 
Let $U \subeq \g$ be an open $0$-neighborhood for which
$\exp\res_U \: U \to \exp(U)$ is a chart of $G$ and
let $\ft \subeq \g$ be a compactly embedded Cartan subalgebra. 
Then $U \cap \ft$ contains a regular element $x$, i.e.,
$\ker(\ad x) = \ft$. Then  \cite[Lemma~VII.1.7]{Ne99}
(or \cite[Thm.~2.9]{HH89}) implies
that $x \in \comp(\g)^\circ$. Thus
$\exp(U \cap \comp(\g)^\circ) \subeq G^e$ is an open neighborhood
of $\exp x$, and therefore $x \in \g^{se}$.
\end{prf}

\subsection{Charactization of stably elliptic elements} 

\begin{lem} \mlabel{lem:unique-max}
If  $\fq \subeq\g$ is a compactly embedded subalgebra  
 of full rank in $\g$, then $\fq$ is contained in a unique maximal
  compactly embedded subalgebra.   
\end{lem}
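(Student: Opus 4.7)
The plan is to produce inside $\fq$ a compactly embedded Cartan subalgebra $\ft$ of $\g$ and then to show that the maximal compactly embedded subalgebra of $\g$ containing $\ft$ is uniquely determined by $\ft$ via the $\ft$-root decomposition, so that any maximal compactly embedded subalgebra containing $\fq$ must coincide with this canonical one.

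First, I would extract $\ft$ from $\fq$. Since $\fq$ is compactly embedded, \cite[Lemma~VII.1.5]{Ne99} gives $\fq \subeq \comp(\g)$, and $\fq$ is reductive; therefore any Cartan subalgebra $\ft$ of $\fq$ is abelian of dimension $\rank(\fq) = \rank(\g)$ and is itself compactly embedded in $\g$. Thus $\ft$ is an abelian compactly embedded subalgebra of $\g$ of maximal possible dimension, which by \cite[Thm.~VII.1.8]{Ne99} makes it a compactly embedded Cartan subalgebra of~$\g$.

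Next, I would show that $\ft$ is contained in a unique maximal compactly embedded subalgebra of $\g$. Passing to the complexification and using the root decomposition $\g_\C = \ft_\C \oplus \bigoplus_{\alpha \in \Delta} \g_\C^{\alpha}$, I set $\g^{[\alpha]} := (\g_\C^{\alpha} + \g_\C^{-\alpha}) \cap \g$ and call $\alpha$ compact when $\g^{[\alpha]} \subeq \comp(\g)$, writing $\Delta_c$ for the set of compact roots. Any subalgebra $\fh$ with $\ft \subeq \fh \subeq \g$ is stable under $\ad \ft$, hence decomposes as $\fh = \ft \oplus \sum_{\alpha \in S} \g^{[\alpha]}$ for some $S \subeq \Delta$; if moreover $\fh$ is compactly embedded, the inclusion $\fh \subeq \comp(\g)$ forces $S \subeq \Delta_c$. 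Using standard root-system arguments (the sum of two compact roots is again compact whenever it is a root) one sees that
\[
\fk_{\max}(\ft) := \ft \oplus \sum_{\alpha \in \Delta_c} \g^{[\alpha]}
\]
is itself a subalgebra contained in $\comp(\g)$, hence compactly embedded; it is therefore the unique maximal compactly embedded subalgebra of $\g$ containing~$\ft$.

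Since $\fq$ is compactly embedded and contains $\ft$, the previous paragraph gives $\fq \subeq \fk_{\max}(\ft)$. If $\fk$ is now any maximal compactly embedded subalgebra containing $\fq$, then $\fk$ contains $\ft$ as well, and the same argument yields $\fk \subeq \fk_{\max}(\ft)$; by maximality $\fk = \fk_{\max}(\ft)$, which proves uniqueness. The main delicate point is verifying that $\fk_{\max}(\ft)$ is a Lie subalgebra, i.e.\ that $\Delta_c$ is closed under root sums; this is a standard structural fact which I would invoke from \cite[Ch.~VII]{Ne99} rather than reprove.
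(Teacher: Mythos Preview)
Your proof is correct and follows essentially the same strategy as the paper: extract a compactly embedded Cartan subalgebra $\ft \subeq \fq$ and reduce to the uniqueness of the maximal compactly embedded subalgebra containing~$\ft$. The paper is simply more economical, invoking \cite[Prop.~VII.2.5]{Ne99} directly for this last step rather than sketching its proof via the root decomposition as you do (and indeed your final deferral to \cite[Ch.~VII]{Ne99} is precisely this citation).
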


\begin{prf} By assumption, $\fq$ contains a Cartan subalgebra
  $\ft$ of $\fq$ and then $\ft$ is compactly embedded.
  Let $\fk\supeq \fq$ be a maximal compactly embedded subalgebra of $\g$.
  Then $\fk$ contains $\ft$, hence is unique by
  \cite[Prop.~VII.2.5]{Ne99}.   
\end{prf}

\begin{lem} \mlabel{lem:unique-with-t} 
  Let $\ft \subeq\g$ be a compactly embedded Cartan subalgebra
  and $\fk \supeq \ft$ be maximal compactly embedded.
  Then a Lie subalgebra $\fq \supeq \ft$ is compactly
  embedded if and only if $\fq \subeq \fk$.
\end{lem}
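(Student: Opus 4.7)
The plan is to deduce both directions directly from Lemma~\ref{lem:unique-max} together with the hypothesis that $\ft$ is a Cartan subalgebra of $\g$ (hence of maximal rank).

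For the easy ``if'' direction, suppose $\fq \subeq \fk$. Since $\fk$ is compactly embedded, the group $\oline{\la \exp(\ad \fk)\ra} \subeq \Aut(\g)$ is compact. The subgroup $\oline{\la \exp(\ad \fq)\ra}$ is then a closed subgroup of this compact group, hence itself compact, which means $\fq$ is compactly embedded.

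For the ``only if'' direction I would first apply Lemma~\ref{lem:unique-max} to $\ft$ itself: since $\ft$ is a Cartan subalgebra of $\g$ it has full rank, and being compactly embedded it is contained in a \emph{unique} maximal compactly embedded subalgebra of $\g$; by assumption $\fk$ is such a maximal subalgebra containing $\ft$, so this unique maximal subalgebra is exactly $\fk$. Now assume $\fq \supeq \ft$ is compactly embedded. Because $\fq$ contains the Cartan subalgebra $\ft$, it still has full rank in $\g$, so a second application of Lemma~\ref{lem:unique-max} gives a unique maximal compactly embedded subalgebra $\fk' \supeq \fq$. Then $\ft \subeq \fq \subeq \fk'$, and the uniqueness established for $\ft$ forces $\fk' = \fk$. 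Hence $\fq \subeq \fk$, completing the proof.

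No real obstacle arises: the whole argument is a bookkeeping exercise in the uniqueness statement of Lemma~\ref{lem:unique-max}, with the only subtle point being the verification that any $\fq$ with $\fq \supeq \ft$ automatically has full rank in~$\g$, which is immediate from $\ft$ being a Cartan subalgebra of~$\g$.
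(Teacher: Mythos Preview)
Your proof is correct and follows essentially the same route as the paper's: for the nontrivial direction, embed $\fq$ in a maximal compactly embedded subalgebra $\fk'$, observe $\ft \subeq \fk'$, and then invoke Lemma~\ref{lem:unique-max} (applied to~$\ft$) to conclude $\fk' = \fk$. The only cosmetic difference is that you explicitly appeal to Lemma~\ref{lem:unique-max} a second time to produce $\fk'$, whereas the paper just asserts the existence of such a maximal $\fk_1$ directly.
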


\begin{prf} Any subalgebra of $\fk$ is compactly embedded.
  If, conversely, $\fq \supeq \ft$ is compactly embedded,
  then it is contained in a maximal compactly embedded subalgebra
  $\fk_1 \supeq \fq \supeq \ft$, and Lemma~\ref{lem:unique-max}
  implies $\fk_1 = \fk$.
\end{prf}

\begin{lem} \mlabel{lem:cent-in-k} If $x \in \fk$
    and $\Fix(e^{\ad x})$ is compactly embedded, then
    $\Fix(e^{\ad x}) \subeq \fk$.
\end{lem}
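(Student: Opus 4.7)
Write $\fq := \Fix(e^{\ad x})$. The plan is to reduce the statement to Lemma~\ref{lem:unique-with-t} by exhibiting a compactly embedded Cartan subalgebra $\ft$ of $\g$ satisfying $\ft \subeq \fk \cap \fq$. Once such a $\ft$ is found, Lemma~\ref{lem:unique-with-t} applies directly to the compactly embedded subalgebra $\fq \supeq \ft$ and yields $\fq \subeq \fk$.

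The construction of $\ft$ proceeds in two steps. First, since $\fk$ is compactly embedded, it is reductive (its adjoint image generates a compact subgroup of $\Aut(\g)$), so every element of $\fk$ lies in a maximal abelian subalgebra of $\fk$; pick such a subalgebra $\ft$ containing the given $x$. Second, I would invoke the standard fact from \cite[Ch.~VII]{Ne99} (namely \cite[Prop.~VII.2.5]{Ne99}, already cited in the excerpt) that a maximal abelian subalgebra of a maximal compactly embedded subalgebra is automatically a compactly embedded Cartan subalgebra of $\g$. This gives $\ft$ with $\ft \subeq \fk$ and $x \in \ft$, as required.

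With this $\ft$ in hand the rest is immediate: $\ft$ is abelian and contains $x$, so $[\ft,x] = \{0\}$, whence $\ft \subeq \ker(\ad x) \subeq \Fix(e^{\ad x}) = \fq$. Since $\fq$ is compactly embedded by hypothesis and contains $\ft$, Lemma~\ref{lem:unique-with-t} (applied to this $\ft$ and $\fk$) yields $\fq \subeq \fk$.

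The only genuinely non-trivial step is the promotion of a maximal abelian subalgebra of $\fk$ to a compactly embedded Cartan subalgebra of $\g$; this is where the maximality of $\fk$ among compactly embedded subalgebras is used in an essential way. Everything else is formal: that $x \in \fq$, that $\ker(\ad x) \subeq \Fix(e^{\ad x})$, and the direct appeal to Lemma~\ref{lem:unique-with-t}.
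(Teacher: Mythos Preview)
Your overall strategy matches the paper's: find a compactly embedded Cartan subalgebra $\ft \subeq \fk$ with $x \in \ft$, then apply Lemma~\ref{lem:unique-with-t}. However, there is a real gap in your ``promotion'' step. The claim that a maximal abelian subalgebra of $\fk$ is automatically a Cartan subalgebra of $\g$ is \emph{false} without further hypotheses: for $\g = \fsl_3(\R)$ the maximal compactly embedded subalgebra is $\fk = \so_3(\R)$, whose maximal abelian subalgebras are one-dimensional, while $\rk(\g) = 2$. Moreover, \cite[Prop.~VII.2.5]{Ne99} does not assert this; it is the uniqueness statement used in Lemma~\ref{lem:unique-max}, not a rank comparison.

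What is missing is exactly the deduction that $\g$ possesses a compactly embedded Cartan subalgebra at all, and this is where the hypothesis on $\Fix(e^{\ad x})$ must enter. The paper's argument supplies it: since $x \in \fk$ is elliptic, $\ad x$ is semisimple, so $\ker(\ad x)$ contains a Cartan subalgebra of $\g$; and $\ker(\ad x) \subeq \Fix(e^{\ad x})$ is compactly embedded by hypothesis, so that Cartan subalgebra is compactly embedded. Only then does one know that Cartan subalgebras of $\fk$ (in particular a maximal abelian $\ft \ni x$) are compactly embedded Cartan subalgebras of~$\g$, after which your appeal to Lemma~\ref{lem:unique-with-t} goes through. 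Insert this step and your proof becomes the paper's.
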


\begin{prf} First, $\Fix(e^{\ad x}) \supeq \ker(\ad x)$
  implies that $\ker(\ad x) = \fz_\g(x)$ is compactly embedded.
  Since $\ad x$ is semisimple,
this subspace contains a Cartan subalgebra of~$\g$.
So $\g$ possesses a compactly embedded Cartan subalgebra
  and thus every Cartan subalgebra 
  $\ft \subeq \fz_\fk(x)$ is a compactly embedded Cartan
  subalgebra of $\g$.
  In view of   Lemma~\ref{lem:unique-with-t}, the inclusions
  $\ft \subeq \fz_\g(x)  \subeq \Fix(e^{\ad x})$ 
  imply that $\Fix(e^{\ad x}) \subeq \fk$.
\end{prf}

\begin{lem} \mlabel{lem:compact-central}
Let $G$ be a connected Lie group,
$\fk \subeq \g$ a maximal compactly embedded Lie subalgebra
and $K = \exp \fk$.
If $k \in K$ is such that $\g^{\Ad(k)} = \Fix(\Ad(k))$
is compactly embedded, then
\begin{equation}
  \label{eq:fix0}
  gkg^{-1} \in K \qquad \Rarrow \qquad g \in K
\end{equation}
and in particular
\begin{equation}
  \label{eq:fix1}
  Z_G(k) = Z_K(k)\subeq K.
\end{equation}
\end{lem}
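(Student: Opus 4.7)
The strategy is to apply Lemma~\ref{lem:cent-in-k} to $k = \exp x$ (with $x \in \fk$, possible since $K = \exp\fk$) to obtain that $\Fix(\Ad(k)) = \Fix(e^{\ad x}) \subseteq \fk$, so in particular $\fp \cap \Fix(\Ad(k)) = \{0\}$ for any $\fk$-invariant complement $\fp$. This is the key rigidity input. It already gives $\L(Z_G(k)) = \Fix(\Ad(k)) \subseteq \fk$, hence $Z_G(k)_e \subseteq K$; the substantive work is to control the non-identity components and to rule out $g \notin K$ in~\eqref{eq:fix0}.

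Next, I would invoke the polar decomposition $K \times \fp \to G$, $(k',p)\mapsto k'\exp(p)$, which is a diffeomorphism for $\fk$ maximal compactly embedded (standard, e.g.\ from \cite[Thm.~VII.1.4]{Ne99}). Via the induced diffeomorphism $\phi\colon \fp \to G/K$, $p \mapsto \exp(p)K$, the action of any $k'\in K$ on $G/K$ becomes \emph{linear}: using $\Ad(k')\fp = \fp$, one computes
\[
 k'\cdot \phi(p) \;=\; k'\exp(p)K \;=\; \exp(\Ad(k')p)\cdot k' K \;=\; \phi(\Ad(k') p).
\]
In particular, the fixed set of $k$ on $G/K$ corresponds to $\fp^{\Ad(k)} = \fp \cap \Fix(\Ad(k)) = \{0\}$ by the first step, so $eK$ is the unique fixed point of~$k$ on~$G/K$.

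To establish \eqref{eq:fix0}, take $g \in G$ with $gkg^{-1} \in K$ and write $g = k_0\exp(p_0)$. Conjugation by $k_0^{-1}$ shows $\exp(p_0)k\exp(-p_0)\in K$, which (setting $s = \exp(p_0)$) is equivalent to $k \in s^{-1}Ks$, i.e.\ to $k$ fixing the coset $\exp(-p_0)K = \phi(-p_0)$ in $G/K$. The uniqueness of the fixed point forces $p_0 = 0$, hence $g = k_0 \in K$. Assertion~\eqref{eq:fix1} is then immediate: every $h \in Z_G(k)$ satisfies $hkh^{-1} = k \in K$, so \eqref{eq:fix0} gives $h \in K$ and therefore $h \in Z_K(k)$; the reverse inclusion is trivial.

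The main obstacle is confirming that the polar decomposition is indeed a global diffeomorphism in the paper's generality (connected Lie groups whose Lie algebra has a maximal compactly embedded subalgebra); once this standard structural fact is in place, the linearity of the $K$-action on $G/K$ combined with Lemma~\ref{lem:cent-in-k} makes the argument essentially formal.
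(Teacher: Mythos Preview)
Your approach is essentially the same as the paper's, and your geometric rephrasing via the unique fixed point of $k$ on $G/K$ is a clean way to package the argument. The paper works directly in $G$: writing $g = \exp(x_1)\exp(x_2)\exp(x_3)k_0$ and using the $K$-equivariance of the decomposition to deduce $\Ad(k)x_j = x_j$, hence $x_j \in E_j \cap \Fix(\Ad(k)) \subseteq E_j \cap \fk = \{0\}$. Your fixed-point argument on $G/K$ is the same computation read through the quotient.

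The one genuine issue is the polar decomposition you invoke. The single-factor map $K \times \fp \to G$, $(k',p)\mapsto k'\exp(p)$, is \emph{not} a diffeomorphism for arbitrary connected Lie groups---this is the Cartan decomposition, valid in the reductive case, but it can fail when $\g$ has a nontrivial radical (the exponential on $\fp$ need not be a diffeomorphism onto a closed submanifold, and $K\exp(\fp)$ need not exhaust $G$). The reference \cite[Thm.~VII.1.4]{Ne99} gives conjugacy of maximal compactly embedded subalgebras, not this decomposition. The paper instead uses \cite[Thm.~14.3.7]{HN12}, which provides three $\Ad(K)$-invariant subspaces $E_1,E_2,E_3 \subseteq \g$ with $\Phi\colon E_1\times E_2\times E_3\times K \to G$, $(x_1,x_2,x_3,k')\mapsto \exp(x_1)\exp(x_2)\exp(x_3)k'$, a $K$-equivariant diffeomorphism. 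Your argument adapts without change: the left $K$-action on $G/K \cong E_1\times E_2\times E_3$ is still linear (by $\Ad$ on each factor), and the fixed set of $k$ is $\prod_j (E_j \cap \Fix(\Ad(k))) = \{0\}$ by Lemma~\ref{lem:cent-in-k}. So the gap you correctly flagged as ``the main obstacle'' is real, but the fix is exactly the refined decomposition the paper uses.
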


\begin{prf}   We write $k = \exp x$ with some $x \in \fk$ and observe that
  \begin{equation}
    \label{eq:infk}
      \Fix(\Ad(k)) = \Fix(e^{\ad x}) \subeq \fk
  \end{equation}
follows from Lemma~\ref{lem:cent-in-k}.

By \cite[Thm.~14.3.7]{HN12}
  there exist three $\Ad(K)$-invariant linear subspaces 
  $E_1, E_2, E_3 \subeq \g$ (possibly trivial) such that the map
  \[ \Phi \: E_1 \times E_2 \times E_3 \times K \to G,\quad
  \Phi(x_1, x_2, x_3, k) := \exp(x_1) \exp(x_2) \exp(x_3) k \]
  is a $K$-equivariant diffeomorphism for the $K$ action on
  $G$ by conjugation and on the left by
  \[ q.(x_1, x_2, x_3, k) := (\Ad(q)x_1, \Ad(q)x_2, \Ad(q)x_3,
    qkq^{-1})\quad \mbox{ for } \quad q \in K.\]
  For $g = \Phi(x_1, x_2, x_3,k_0)$ we consider the condition
  \[ g kg^{-1}
    = \Phi(x_1, x_2, x_3,k_0) k \Phi(x_1, x_2, x_3,k_0)^{-1} 
    = \Phi(x_1, x_2, x_3,e) k_0 k k_0^{-1} \Phi(x_1, x_2, x_3,e)^{-1} \in K.\]
  As $\Fix(\Ad(k_0 k k_0^{-1})) = \Ad(k_0) \Fix(\Ad(k))$ is compactly embedded,
  we may assume that $k_0 = e$. We have to show that $x_j = 0$ for $j = 1,2,3$.
  The relation $k' := g kg^{-1} \in K$ implies that
  \[  \Phi(x_1, x_2, x_3,k)  = gk = k'g
    = \Phi(\Ad(k')x_1, \Ad(k')x_2, \Ad(k')x_3,k'),\]
  so that we obtain $k = k'$ and $\Ad(k)x_j = x_j$ for $j = 1,2,3$.
  With \eqref{eq:infk}, we finally obtain
\[ x_j \in E_j \cap \Fix(\Ad(k)) \subeq E_j \cap \fk = \{0\}.\qedhere\]
\end{prf}

The following theorem generalizes the characterization
of interior elements of $\comp(\g)$ as those whose
centralizer is compactly embedded 
(\cite[Thm.~2.9]{HH89}, \cite[Lemma~VII.1.7]{Ne99})
from the Lie algebra level to the group context.
It is a key result of this paper.  

\begin{thm} \mlabel{thm:compemb}
  {\rm(Characterization of stably elliptic elements)} 
For an elliptic element 
$g \in G^e$, the following are equivalent: 
\begin{itemize}
\item[\rm(a)] $g$ is stably elliptic, i.e.,
  $g \in G^{\rm se} = (G^e)^\circ$. 
\item[\rm(b)] $\L(Z_G(g)) = \g^{\Ad(g)} = \Fix(\Ad(g))$ is compactly embedded. 
\item[\rm(c)] $Z_G(g) \subeq G^e$. 
\end{itemize}
\end{thm}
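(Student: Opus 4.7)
All three conditions are invariant under conjugation of $g$ (for (b), because $\g^{\Ad(hgh^{-1})} = \Ad(h)\,\g^{\Ad(g)}$), so by Lemma~\ref{lem:c.1a}(iii) I may assume $g = k \in K$ and write $k = \exp x$ with $x \in \fk$. The plan is to establish (b)$\Leftrightarrow$(c) and (a)$\Leftrightarrow$(b). The first equivalence follows immediately from the preceding lemmas: for (c)$\Rightarrow$(b), the Lie algebra $\g^{\Ad(g)} = \L(Z_G(g))$ exponentiates into $Z_G(g) \subeq G^e$, so Lemma~\ref{lem:c.1a}(iv) yields $\g^{\Ad(g)} \subeq \comp(\g)$; for (b)$\Rightarrow$(c), Lemma~\ref{lem:cent-in-k} forces $\g^{\Ad(k)} = \Fix(e^{\ad x}) \subeq \fk$, and Lemma~\ref{lem:compact-central} then gives $Z_G(k) = Z_K(k) \subeq K \subeq G^e$.

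I expect (b)$\Rightarrow$(a) to be the main obstacle, since one must upgrade an algebraic compactness property of the fixed-point algebra to openness of $G^e$ at $k$. The idea is to produce an open neighborhood of $k$ inside $G^e$ as the image of the smooth conjugation map
\[ \psi \colon G \times K \to G, \qquad \psi(h, k') := h k' h^{-1}, \]
whose image lies in $G^e$ by Lemma~\ref{lem:c.1a}(iii). Left-trivializing $T_k G \cong \g$, the differential at $(e, k)$ computes to $d\psi_{(e, k)}(X, Y) = (\Ad(k)^{-1} - \id) X + Y$, with image $(\id - \Ad(k)^{-1})\g + \fk$. Ellipticity of $k$ makes $\Ad(k)$ semisimple, so $\g = \Fix(\Ad(k)) \oplus (\id - \Ad(k)^{-1})\g$; combined with $\Fix(\Ad(k)) \subeq \fk$ from Lemma~\ref{lem:cent-in-k}, the differential is surjective. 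Hence $\psi$ is a submersion at $(e, k)$, its image contains an open neighborhood of $k$, and that neighborhood lies in $G^e$, placing $k$ in $(G^e)^\circ = G^{se}$.

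For (a)$\Rightarrow$(b), the plan is to push small commuting perturbations of $g$ through Lemma~\ref{lem:c.1a}(iv). For $y \in \g^{\Ad(g)}$ small enough that $g \exp y$ remains in a fixed $G^e$-neighborhood of $g$, the identity $\Ad(g)y = y$ makes $g$ and $\exp y$ commute in $G$, so $\oline{\la \Ad(g) \ra}$ and $\oline{\la \Ad(g \exp y) \ra}$ are compact abelian subgroups of $\Aut(\g)$ that commute elementwise. Their product is therefore a compact subgroup containing $\Ad(\exp y) = \Ad(g)^{-1}\Ad(g \exp y)$, so $\exp y \in G^e$ and thus $y \in \comp(\g)$ by Lemma~\ref{lem:c.1a}(iv). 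Scaling invariance of $\comp(\g)$ then extends this conclusion from a neighborhood of $0$ to all of $\g^{\Ad(g)}$, delivering (b).
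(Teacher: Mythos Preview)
Your proof is correct and follows essentially the same route as the paper's: the implications (a)$\Rightarrow$(b), (b)$\Rightarrow$(c), (c)$\Rightarrow$(b) are argued identically, and for (b)$\Rightarrow$(a) both you and the paper use a submersion argument based on the conjugation map into $G^e$---you work with $G\times K$ while the paper uses a $\fk$-invariant complement $\fp\times K$, but the surjectivity of the differential reduces to the same fact $\Fix(\Ad(k))\subeq\fk$ in either case. Your upfront reduction to $g\in K$ via conjugation invariance is a cosmetic difference from the paper's choice of writing $g=\exp x$ and selecting $\fk\ni x$ afterwards.
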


\begin{prf} In view of Lemma~\ref{lem:c.1a}(iv), there exists 
an element $x \in \g$ with $g = \exp x$. 

\nin (a) $\Rarrow$ (b): Let $g \in \comp_{\Ad(g)}(G)^\circ$. 
Let $y \in \g^{\Ad(g)}$, so that $g$ commutes with the one-parameter 
subgroup $\exp(\R y)$. Our assumption implies the existence of an 
$\eps > 0$ with $g \exp(t y) \in G^e$ for $|t| < \eps$. 
As $g$ and $\exp(ty)$ commute, it follows that $\exp(ty) \in G^e$, 
and thus $y \in \comp(\g)$ by Lemma~\ref{lem:c.1a}(iv). This shows that 
$\g^{\Ad(g)} \subeq \comp(\g)$, so that $\g^{\Ad(g)}$ is compactly 
embedded by \cite[Lemma~VII.1.5(b)]{Ne99}
(cf.\ also \cite[Cor.~2.6]{HH89}). 

\nin (b) $\Rarrow$ (c):
Suppose that $\g^{\Ad(g)}$ is compactly embedded.
We apply Lemma~\ref{lem:compact-central}
with a maximal compactly embedded subalgebra $\fk$ containing~$x$
and obtain for the corresponding integral subgroup
$K = \exp \fk$ that
\[ Z_G(g) = Z_K(g)  \subeq K \subeq G^e.\] 
     
\nin (c) $\Rarrow$ (b) follows from $\g^{\Ad(g)}= \L(Z_G(g))$
because (c) entails that $\L(Z_G(g)) \subeq \comp(\g)$ by
Lemma~\ref{lem:c.1a}(iv).

\nin (b) $\Rarrow$ (a): Assume that $\g^{\Ad(g)}$ is compactly embedded,
hence
contained in a maximal compactly embedded subalgebra 
$\fk \subeq \g$. Then $x \in \g^{\Ad(g)} \subeq \fk$ and 
$g = \exp x \in K := \exp \fk$.
Let $\fp \subeq \g$ be a $\fk$-invariant vector space
complement of $\fk$. 
We consider the map 
\[ \Phi \: \fp \times K\to \comp_{\Ad(G)} \subeq G, \quad 
\Phi(y,k) := (\exp y) k (\exp y)^{-1}.\] 
Then 
\[ \im(T_{(0,k)}(\Phi)) 
= k.\big(\fk + (\Ad(k)^{-1}-\1) \fp\big) \] 
coincides with $T_k(G)$ if and only if 
$\fp^{\Ad(k)} = \{0\}$, which follows from $\g^{\Ad(k)} \subeq \fk$.
Now the Inverse Function Theorem 
implies that $g = \Phi(0,g) \in \im(\Phi)^\circ$ is an inner point of 
$G^e$.
\end{prf}

\begin{prob}
  Show that, if $G$ is a connected Lie group
  and the subalgebra
  \[ \Fix(\Ad(g)) = \ker(\Ad(g) - \1) \]
  is compactly embedded in $\g$, then $g$ is elliptic.
    If $g = \exp x$ for some $x \in \g$, then $x \in \Fix(\Ad(g))
  \subeq \comp(\g)$, so that Lemma~\ref{lem:c.1a} implies that
  $g \in G^e$. If $g \not\in \exp(\g)$, we do not see how to argue. 
\end{prob}

\begin{rem} \mlabel{rem:1.7} Let $k = \exp x$. We have 
\begin{equation}
  \label{eq:cent1}
  Z_G(\exp x)
  = \{ g \in G \: \exp x g \exp(-x) = g \}
  = \{ g \in G \: \exp x = \exp(\Ad(g)x) \}
\end{equation}
and, for $x \in \comp(\g)$,  
\begin{equation}
  \label{eq:cent2}
  Z_\g(\exp x) = \Fix(e^{\ad x}) 
  = \bigoplus_{n\in \Z} \ker((\ad x)^2 + (2\pi n)^2\1).
\end{equation}
If $x$ is sufficiently small, so that the spectral radius is
less than $2\pi$, it follows that 
\[ \Fix(e^{\ad x}) = \ker(\ad x).\] 
\end{rem}

\begin{ex}
  The element
  \[ z := \pi \pmat{ 0 & -1 \\ 1 & 0} \in \g := \fsl_2(\R) \]
  is compact with $\ker(\ad z) = \so_2(\R) \subeq \comp(\g)$,
  so that $z \in \comp(\g)^\circ$. However,
  $\exp z = -\1 \in Z(G)$ and
  \[ Z(G) \cap G^{se} = \eset \]
  follows from $\g^{\Ad(g)} = \g$ for $g \in Z(G)$ (Theorem~\ref{thm:compemb}).   So $z \not\in \g^{se}$. 
\end{ex}

A Lie algebra $\g$ is called {\it quasihermitian}
if for one (and hence for all) maximal compactly embedded subalgebras
$\fk \subeq \g$ the center $\fz(\fk)$ satisfies
\[ \fk = \fz_\g(\z(\fk)) = \{ x \in \g \: [x,\fz(\fk)] =  \{0\}\}.\]
This implies that every Cartan subalgebra $\ft \subeq \fk$
is a compactly embedded Cartan subalgebra of $\g$
(\cite[Thm.~4.16]{HH89}, \cite{Ne99}). 

The following lemma translates the characterization of quasihermitian
Lie algebras to the group level. 

\begin{lem} \mlabel{lem:qh}
  If $\g$ is quasihermitian,
  then there exists an element $z \in Z(K)_e$ with $Z_G(z) = K$.
\end{lem}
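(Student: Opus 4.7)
The plan is to exhibit $z$ as $\exp z_{0}$ for a sufficiently generic element $z_{0}\in\fz(\fk)=\L(Z(K)_e)$. Since $\fz(\fk)\subseteq\fk\subseteq\comp(\g)$, the operator $\ad z_{0}$ is semisimple with purely imaginary spectrum on the complexification~$\g_{\C}$, and the $\ad \fz(\fk)$--weight decomposition can be written
\[ \g_{\C} \;=\; \g_{0}\,\oplus\,\bigoplus_{\alpha\in\Delta}\g_{\alpha}, \]
with finite weight set $\Delta\subseteq\fz(\fk)^{\ast}\setminus\{0\}$ and zero-weight space $\g_{0}=\fz_{\g}(\fz(\fk))_{\C}$.

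First I would pick $z_{0}\in\fz(\fk)$ satisfying $\alpha(z_{0})\notin 2\pi\Z$ for every $\alpha\in\Delta$. This is possible because the forbidden set is a countable union of affine hyperplanes in $\fz(\fk)$, hence has dense complement. In the degenerate case $\fz(\fk)=\{0\}$ the quasihermitian hypothesis already forces $\fk=\fz_{\g}(\fz(\fk))=\g$, hence $K=G$, and $z:=e$ does the job.

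Second, applying Remark~\ref{rem:1.7} to $x=z_{0}$, the choice of $z_{0}$ eliminates each summand $\ker((\ad z_{0})^{2}+(2\pi n)^{2}\1)$ for $n\neq 0$, while semisimplicity of $\ad z_{0}$ yields $\ker((\ad z_{0})^{2})=\ker(\ad z_{0})$ for $n=0$; therefore
\[ \Fix(\Ad(\exp z_{0})) \;=\; \Fix(e^{\ad z_{0}}) \;=\; \ker(\ad z_{0}) \;=\; \fz_{\g}(\fz(\fk)) \;=\; \fk, \]
the last equality being precisely the quasihermitian condition. In particular this fixed-point algebra is compactly embedded, so Lemma~\ref{lem:compact-central} applied to $k:=\exp z_{0}\in K$ yields $Z_{G}(\exp z_{0})=Z_{K}(\exp z_{0})$.

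Finally, because $z_{0}\in\fz(\fk)$ commutes with all of $\fk$, the element $z:=\exp z_{0}$ lies in $Z(K)_{e}$ and is centralized by every element of $K$, so $Z_{K}(z)=K$, which combined with the previous step gives $Z_{G}(z)=K$. There is no real obstacle here; the only substantive point is the interplay between Remark~\ref{rem:1.7}, which forces $\Fix(e^{\ad z_{0}})=\ker(\ad z_{0})$ once $z_{0}$ avoids the countable family of hyperplanes $\{\alpha=2\pi n\}_{\alpha\in\Delta,\,n\in\Z}$, and the quasihermitian identity $\fz_{\g}(\fz(\fk))=\fk$, which promotes this Lie-algebra equality to the desired group-level centralizer identity via Lemma~\ref{lem:compact-central}.
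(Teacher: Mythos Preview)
Your argument is correct and follows the same route as the paper: exhibit $z=\exp z_0$ for a generic $z_0\in\fz(\fk)$, use Remark~\ref{rem:1.7} to get $\Fix(e^{\ad z_0})=\ker(\ad z_0)=\fk$, and then apply Lemma~\ref{lem:compact-central}. The only difference is cosmetic---the paper picks $y\in\fz(\fk)$ with $\fz_\g(y)=\fk$ and \emph{rescales} it so that the spectral radius of $\ad y$ is below $2\pi$, whereas you avoid a countable family of affine hyperplanes; note, however, that since the eigenvalues $\alpha(z_0)$ of $\ad z_0$ are purely imaginary, your genericity condition should read $\alpha(z_0)\notin 2\pi i\,\Z$ rather than $2\pi\Z$, as otherwise the summands with $n\neq 0$ in Remark~\ref{rem:1.7} are not actually eliminated.
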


\begin{prf} As $\g$ is quasihermitian,
  $\fz(\fk)$ contains  elements $y$ with $\fz_\g(y) = \fk$.  
  \begin{footnote}{Then $\g$ contains a compactly embedded Cartan subalgebra
      $\ft$ and no non-compact root vanishes on $\fz(\fk)$. Hence
      there exists a single element on which no non-compact root vanishes.}    
  \end{footnote}
  Passing to a suitable multiple, we may assume that the
  spectral radius of $y$ is less than $2\pi$.
  Then $\g^{\Ad(\exp y)} = \ker(\ad y) \subeq \fk$
  (cf.\ Remark~\ref{rem:1.7}).
  Now Lemma~\ref{lem:compact-central} implies that
  $Z_G(\exp y) \subeq K$, so that $K = Z_G(\exp y)$ because
  $\exp y$ is central in~$K$.  
\end{prf}

\begin{ex}
    The Lie algebra $\g = \so_{4,4}(\R)$ is simple and split of type $D_4$.
    The subalgebra $\fk = \so_4(\R) \oplus \so_4(\R)$ is maximal compactly
    embedded and
    $\ft \cong \so_2(\R)^{\oplus 4}$ is a compactly embedded Cartan subalgebra.
    In particular, $\g^{se} \not=\eset$ by Proposition~\ref{prop:c.1ax}.
    As $\fz(\fk) = \{0\}$, the Lie algebra $\g$ is not quasihermitian.  
\end{ex}

\subsection{Polar decomposition of elliptic domains} 

\begin{prop} \mlabel{prop:2.11} {\rm(Polar decomposition of
    elliptic domains)} 
  Let $G$ be a connected Lie group for which $G^{se} \not=\eset$
  and $K$ as above. Then there exist
  $\Ad(K)$-invariant subspace $E_1, E_2$ and $E_3$ in $\g$,
  for which 
  \[ \Phi\: E_1 \times E_2 \times E_3 \times K \to G,
    \quad (x_1, x_2, x_3,k) \mapsto
    \exp(x_1) \exp(x_2) \exp(x_3) k \]
  is a diffeomorphism and for $M := \Phi(E_1 \times E_2 \times E_3)$,
  the following assertions hold: 
  \begin{itemize}
  \item[\rm(a)] $\phi \: M \times \fk^{se} \to \g^{se},
    (m,x) \mapsto \Ad(m) x$ is a diffeomorphism.   
  \item[\rm(b)] For $K^{se} := K \cap G^{se}$, the map
    $\phi \: M \times K^{se} \to G^{se},   (m,k) \mapsto mkm^{-1}$ 
  is a diffeomorphism.   
  \end{itemize}
\end{prop}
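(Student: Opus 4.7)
The plan is to invoke \cite[Thm.~14.3.7]{HN12}, as already used in the proof of Lemma~\ref{lem:compact-central}, to obtain $\Ad(K)$-invariant subspaces $E_1, E_2, E_3 \subseteq \g$ complementary to $\fk$ together with the $K$-equivariant diffeomorphism $\Phi$. Setting $\fp := E_1 \oplus E_2 \oplus E_3$, this yields the $\Ad(K)$-module decomposition $\g = \fk \oplus \fp$ and the polar decomposition $G = M \cdot K$ with unique factorization.

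For the bijectivity of $\phi_b$, surjectivity uses Lemma~\ref{lem:c.1a}(iii) to write $g \in G^{se}$ as $hkh^{-1}$ with $k \in K \cap G^{se} = K^{se}$; decomposing $h = mq$ via $\Phi$ rewrites $g = m(qkq^{-1}) m^{-1}$ with $qkq^{-1} \in K^{se}$. For injectivity, suppose $m_1 k_1 m_1^{-1} = m_2 k_2 m_2^{-1}$; setting $h := m_2^{-1} m_1$ gives $h k_1 h^{-1} = k_2 \in K$, and since Theorem~\ref{thm:compemb} ensures that $\Fix(\Ad(k_1))$ is compactly embedded, Lemma~\ref{lem:compact-central} forces $h \in K$, whereupon the uniqueness in $\Phi$ gives $m_1 = m_2$, $h = e$ and hence $k_1 = k_2$. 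Part (a) proceeds by the parallel argument: for surjectivity use $\g^{se} \subseteq \comp(\g) = \Ad(G) \fk$ (Lemma~\ref{lem:c.1a}(iv)) together with $\Ad(K) \fk = \fk$; for injectivity apply $\exp$ to rewrite $\Ad(m_1) y_1 = \Ad(m_2) y_2$ as $h \exp(y_1) h^{-1} = \exp(y_2) \in K$ and invoke Lemma~\ref{lem:compact-central}.

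For smoothness of the inverses I apply the Inverse Function Theorem together with a dimension count, since $\dim(M \times \fk^{se}) = \dim(M \times K^{se}) = \dim \g$; it suffices to show that $d\phi_a$ and $d\phi_b$ are injective at every point. A direct computation, using that $\frac{d}{ds}\Ad(\gamma(s)) y_0\big|_0 = \Ad(m_0)[m_0^{-1}\gamma'(0), y_0]$ for a smooth curve $\gamma$ in $M$ through $m_0$, gives $d\phi_a|_{(m_0, y_0)}(A, Y) = \Ad(m_0)([\xi, y_0] + Y)$ with $\xi := m_0^{-1} A \in \fp_{m_0} := m_0^{-1} T_{m_0} M$, a $\dim \fp$-dimensional complement to $\fk$ in $\g$ (by the $\Phi$-diffeomorphism). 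Vanishing forces $Y = [y_0, \xi]$; splitting $\xi = \xi_\fp + \xi_\fk \in \fp \oplus \fk$ and using $[\fk, \fp] \subseteq \fp$ and $[\fk, \fk] \subseteq \fk$ with $Y \in \fk$ yields $[y_0, \xi_\fp] = 0$, so $\xi_\fp \in \ker(\ad y_0) \cap \fp$. Now $\ker(\ad y_0)$ is compactly embedded (being a subalgebra of the compactly embedded $\Fix(\Ad(\exp y_0))$ from Theorem~\ref{thm:compemb}) and contains a Cartan subalgebra of $\g$, so Lemma~\ref{lem:unique-with-t} gives $\ker(\ad y_0) \subseteq \fk$; hence $\xi_\fp = 0$, forcing $\xi \in \fp_{m_0} \cap \fk = \{0\}$ and $Y = 0$. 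The analogous computation for $\phi_b$ reduces vanishing to $(\Ad(k_0) - 1)\xi_\fp = 0$ and uses $\Fix(\Ad(k_0)) \subseteq \fk$ (Theorem~\ref{thm:compemb} together with Lemma~\ref{lem:cent-in-k}) in place of $\ker(\ad y_0) \subseteq \fk$. The principal obstacle is this smoothness step: the complement $\fp_{m_0}$ depends on $m_0$ and is not $\Ad(k_0)$-invariant in general, so the kernel analysis cannot be carried out inside a fixed subspace; however, splitting tangent vectors against the fixed decomposition $\g = \fk \oplus \fp$ and exploiting the $\Ad(K)$-invariance of $\fp$ reduces the argument to the facts about $\ker(\ad y_0)$ and $\Fix(\Ad(k_0))$ already established in the previous lemmas.
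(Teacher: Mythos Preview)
Your proof is correct and follows essentially the same approach as the paper: the same citation to \cite{HN12} for the setup, the same use of Lemma~\ref{lem:compact-central} and Theorem~\ref{thm:compemb} for bijectivity, and the same reduction of regularity to the inclusions $\ker(\ad y_0)\subseteq\fk$ and $\Fix(\Ad(k_0))\subseteq\fk$. The only difference is cosmetic: the paper verifies \emph{surjectivity} of the differential by passing to an auxiliary $\ad y_0$- (resp.\ $\Ad(k_0)$-) invariant complement $F$, whereas you verify \emph{injectivity} by splitting against the fixed $\Ad(K)$-invariant complement $\fp$; by the dimension count these are equivalent, and your route has the mild advantage of avoiding the extra choice of~$F$. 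One small remark: your appeal to Lemma~\ref{lem:unique-with-t} for $\ker(\ad y_0)\subseteq\fk$ works (take $\ft$ a Cartan of $\fk$ through $y_0$), but citing Lemma~\ref{lem:cent-in-k} directly is shorter.
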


\begin{prf} First we observe that \cite[Thm.~14.3.8]{HN12} provides
  $\Ad(K)$-invariant subspace $E_1, E_2$ and $E_3$,
  for which $\Phi$ is a $K$-equivariant   diffeomorphism. 
  
\nin  (a) {\bf Surjectivity:}
Let $x \in \g^{se}$. Then $x \in \comp(g) = \Ad(G) \fk
= \Ad(MK)\fk= \Ad(M)\fk$.
  Therefore $\phi$ is surjective.

 \nin {\bf Injectivity:} First we observe that, for
  $x \in \fk^{se}$ and $g \in G$, we have
  \begin{equation}
    \label{eq:ink}
    \Ad(g) x \in \fk \quad \Rarrow  \quad g \in K.
  \end{equation}
  In fact, for the element $k := \exp(x) \in K^{se}$
  the subalgebra $\Fix(\Ad(k))$ is compactly embedded by
  Theorem~\ref{thm:compemb}
  and $\Ad(g)x \in \fk$ implies $gkg^{-1} \in K$, so that
  Lemma~\ref{lem:compact-central} yields $g \in K$.

    If $\Ad(m_1) x_1 = \Ad(m_2) x_2$
  for $x_1, x_2 \in \fk^{se}$, then
  $\Ad(m_2^{-1}m_1)x_1 \in \fk$, so that the preceding
  argument yields $m_2^{-1} m_1 \in K$, and thus $m_1 = m_2$,
  which in turn implies $x_1 = x_2$.

  \nin {\bf Regularity:} By the Inverse Function Theorem, it
  remains to show that   the differential of $\phi$ is everywhere
  surjective. We have
  \[ \dd \phi(m,x)(m.v,y) = \Ad(m)(y + [v,x]),\]
  so that
  \[ \Im(\dd\phi(m,x)) = \Ad(m)(\fk + [m^{-1}.T(M),x]).\]
  The subspace $E := m^{-1}.T(M) \subeq \g = T_e(G)$ is a complement of $\fk$.
  We claim that $[x,E] + \fk = \g$.
  As $x$ is compact, there exists an $\ad x$-invariant complement
  $F \subeq \g$ of $\fk$. Then
  \[ \fz_\g(x) = \fz_F(x) \oplus \fz_\fk(x) \subeq \Fix(e^{\ad x}) \subeq \fk \]
  follows from $\Fix(e^{\ad x}) \subeq \fk$ (see Lemma~\ref{lem:cent-in-k}).
  As $F \cap \fk = \{0\}$, we obtain $\fz_F(x) = \{0\}$,
  so that $F = [x,F]$. This leads to
$[x,\g]  + \fk = F + \fk = \g.$ 
  We thus obtain for every complement $E$ of $\fk$ that
  \[ \g =  [x,\g] + \fk = [x,E] + [x,\fk] + \fk = [x,E] + \fk.\]
  This shows that $\dd\phi(m,x)$ is surjective.

  \nin (b) {\bf Surjectivity:}
  By Lemma~\ref{lem:c.1a}(iii), any
  $g \in G^{se}$ is conjugate to an element of $K^{se}$.
  Since $K^{se}$ is conjugation invariant in $K$, we find an $m \in M$
  and $k \in K^{se}$ with $\phi(m,k) = g$.

 \nin {\bf Injectivity:} For $k \in K^{se}$ Theorem~\ref{thm:compemb}
  implies that $\Fix(\Ad(k))$ is compactly embedded,
  so that Lemma~\ref{lem:compact-central} shows that
    \begin{equation}
    \label{eq:ink2}
    gkg^{-1} \in K \quad \Rarrow \quad g \in K.
  \end{equation}
    If $\phi(m_1,k_1) = \phi(m_2,k_2)$, then
  $(m_2^{-1}m_1)k_1 (m_2^{-1}m_1)^{-1} \in K$, so that the preceding
  argument yields $m_2^{-1} m_1 \in K$, and thus $m_1 = m_2$,
  which in turn implies $k_1 = k_2$.
  
  \nin {\bf Regularity:} By the Inverse Function Theorem, it
  remains to show that   the differential of $\phi$ is everywhere
  surjective. Using the group structure on $T(G)$, we have 
  \begin{align*}
 \dd \phi(m,k)(m.v,k.x)
&  = m(k.x)m^{-1} + m(vk - kv)m^{-1} 
  = m(k.x  + vk - kv)m^{-1} \\
    &  = m(k.(x  + \Ad(k)^{-1}v - v))m^{-1}.
  \end{align*}
  Therefore $\dd\phi(m,k)$ is surjective if and only if
  \[ \fk + (\Ad(k)^{-1} - \1)(m^{-1}.T(M)) = \g.\] 
  The subspace $E := m^{-1}.T(M) \subeq \g = T_e(G)$ is a complement of $\fk$.
  We claim that
  \[  (\Ad(k)^{-1} - \1)E + \fk = \g.\] 
  As $k \in K$, there exists an $\Ad(k)$-invariant complement
  $F \subeq \g$ of $\fk$. Then
  \[ \g^{\Ad(k)} = F^{\Ad(k)} \oplus \fk^{\Ad(k)} \]
  entails $F^{\Ad(k)}  = \{0\}$, so that $F = (\Ad(k)-\1)F$, which leads to
  \[  (\Ad(k)-\1)\g  + \fk = F + \fk = \g.\]
  We thus obtain for every complement $E$ of $\fk$ that
  \[ \g =  (\Ad(k)-\1)\g + \fk = (\Ad(k)-\1)E+
     (\Ad(k)-\1) \fk + \fk = (\Ad(k)-\1)E+ \fk.\]
  This shows that $\dd\phi(m,k)$ is surjective.
\end{prf}

\subsection{Another characterization of compact elements}

The theorem below provides an interesting characterization
of compact elements in $\g$ in terms of seemingly weaker conditions. 

\begin{thm}
  \mlabel{thm:compchar}
Let $G$ be a connected Lie group and
$x \in \g$. We consider the flow on $G$, defined by
\[ \alpha_t(g) := \exp(tx) g \exp(-tx)\quad \mbox{ for } \quad t \in \R, g \in G.\]
Then the following assertions hold:
\begin{itemize}
\item[\rm(a)] If $G$ is simply connected, then
$x \in \comp(\g)$ if and only if all orbits $\alpha_\R(g)$ have compact closure.
\item[\rm(b)] Let $\ad x = (\ad x)_s + (\ad x)_n$ be the Jordan decomposition
  of $\ad x$ in $\der(\g)$. Then all orbits of
  the flow   on $\Ad(G)\subeq \Aut(\g)$ defined by 
  \[ \oline\alpha_t(\Ad(g)) = e^{t \ad x}\Ad(g) e^{-t\ad x}
    = \Ad(\alpha_t(g)) \]
  has orbits that are relatively
  compact in $\Aut(G)$ if and only if
  \begin{equation}
    \label{eq:spec-nil}
 \Spec(\ad x) \subeq i \R \quad \mbox{ and } \quad
 (\ad x)_n(\g) \subeq \fz(\g).
  \end{equation}
\item[\rm(c)] If $\g = [\g,\g]$, then \eqref{eq:spec-nil} implies
  $x \in \comp(\g)$. 
\end{itemize}
\end{thm}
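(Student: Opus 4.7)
The plan is to prove the three parts in the order (b), (c), (a). Part (c) will follow from (b) by a one-line commutator computation, and (a) will reduce to (b) together with a direct analysis inside $Z(G)_e$ that exploits the simple connectedness hypothesis.

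For (b), the key idea is to identify the infinitesimal generator of the conjugation flow $\bar\alpha_t$ on $\Ad(G)$. Since $\ad x$ annihilates $\fz(\g)$ and preserves it, it descends to an operator $\bar A$ on the quotient $\g/\fz(\g) \cong \ad(\g) = \L(\Ad(G))$, and this $\bar A$ is precisely $[\ad x, \cdot]$ restricted to $\ad(\g)$. Writing $\ad x = S + N$, the descent is $\bar A = \bar S + \bar N$, and I verify that $\bar A$ is semisimple with purely imaginary spectrum if and only if $\bar N = 0$ and $\Spec(\bar A) \subseteq i\R$, which translate precisely to $N(\g) \subseteq \fz(\g)$ and $\Spec(\ad x) \subseteq i\R$ (the spectra of $\ad x$ and $\bar A$ differ only by the multiplicity of the eigenvalue $0$). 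For $(\Leftarrow)$, the condition $N(\g) \subseteq \fz(\g)$ yields $[N, \ad y] = \ad(Ny) = 0$ for every $y \in \g$, so $e^{tN}$ commutes with all of $\Ad(G)$ and $\bar\alpha_t$ reduces to conjugation by $e^{tS}$, which has compact closure thanks to $\Spec(S) \subseteq i\R$. For $(\Rightarrow)$, using the parametrization $\Ad(\exp sy) = e^{s\ad y}$ and the identity
\begin{equation*}
\bar\alpha_t(\Ad(\exp sy)) = \Ad(\exp(s \, e^{t \ad x} y)),
\end{equation*}
boundedness of orbits through the elements $\Ad(\exp sy)$ reduces to boundedness of trajectories of $e^{t\bar A}$ on $\g/\fz(\g)$, and the standard finite-dimensional fact that a one-parameter linear flow has all orbits relatively compact iff its generator is semisimple with imaginary spectrum completes the argument.

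Part (c) then drops out: if $N(\g) \subseteq \fz(\g)$, the derivation identity gives
\begin{equation*}
N([\g,\g]) \subseteq [N(\g), \g] + [\g, N(\g)] \subseteq [\fz(\g), \g] = \{0\},
\end{equation*}
so $\g = [\g,\g]$ forces $N = 0$, whence $\ad x = S$ is semisimple with imaginary spectrum and $x \in \comp(\g)$.

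For (a), the forward direction exploits the Lie-group isomorphism $\Aut(G) \xrightarrow{\sim} \Aut(\g)$ for simply connected $G$: if $x \in \comp(\g)$ then $\{e^{t\ad x}\}$ has compact closure $T \subseteq \Aut(\g)$, which lifts to a compact subgroup $\tilde T \subseteq \Aut(G)$ containing $\{\alpha_t\}$, and each orbit $\alpha_\R(g) \subseteq \tilde T \cdot g$ is compact. For the backward direction, projection along $\Ad \colon G \to \Ad(G)$ preserves relative compactness of orbits, so (b) supplies $\Spec(\ad x) \subseteq i\R$ and $N(\g) \subseteq \fz(\g)$; the task is to upgrade this to $N = 0$. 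On the complex eigenspaces with $\lambda \neq 0$, $N$ commutes with $S$ and hence preserves $\g_{\C,\lambda}$, so $N(\g_{\C,\lambda}) \subseteq \g_{\C,\lambda} \cap \fz(\g)_\C \subseteq \g_{\C,\lambda} \cap \g_{\C,0} = \{0\}$, giving $N|_{\g_{\C,\lambda}} = 0$. For $y \in \ker S$, the relations $N^2 = 0$ on $\g$ (since $N(\g) \subseteq \fz(\g) \subseteq \ker N$) and the centrality of $Ny$ yield
\begin{equation*}
\alpha_t(\exp sy) = \exp(s(y + tNy)) = \exp(sy) \exp(st \, Ny),
\end{equation*}
so compactness of this orbit in $G$ forces $\{\exp(tNy)\} \subseteq Z(G)_e$ to be relatively compact. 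The key leverage of simple connectedness is that $Z(G)_e$ is itself simply connected: from the fibration $Z(G)_e \hookrightarrow G \twoheadrightarrow G/Z(G)_e$ and the vanishing of $\pi_2$ for any Lie group (Cartan), the long exact homotopy sequence yields $\pi_1(Z(G)_e) = 0$; hence $Z(G)_e \cong \R^{\dim \fz(\g)}$ and $\exp|_{\fz(\g)}$ is a linear isomorphism, forcing $Ny = 0$ for every $y \in \ker S$. Combined with the earlier vanishing on $\bigoplus_{\lambda \neq 0} \g_{\C,\lambda}$ this gives $N = 0$ on all of $\g$, so $x \in \comp(\g)$.

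The main obstacle I anticipate is the careful execution of $(\Rightarrow)$ in (b): translating relative compactness of orbits on the non-linear submanifold $\Ad(G) \subseteq \Aut(\g)$ into the linear-algebraic statement for the induced generator $\bar A$ on $\g/\fz(\g)$ requires controlling the interaction between the oscillatory factor $e^{tS}$ and the polynomial factor $e^{tN}$, and ruling out accidental cancellations by an almost-periodicity argument in $\fgl(\g)$. A secondary delicate input is the simple connectedness of $Z(G)_e$ in (a), which relies on $\pi_2$-vanishing for Lie groups.
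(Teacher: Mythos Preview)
Your arguments for (c), for (a), and for the implication $(\Leftarrow)$ in (b) are correct and essentially identical to the paper's. The treatment of (a) is even slightly slicker than the paper's: once $N(\g)\subseteq\fz(\g)$ is known, your eigenspace argument $N(\g_{\C,\lambda})\subseteq\g_{\C,\lambda}\cap\fz(\g)_\C=\{0\}$ for $\lambda\neq 0$, together with the central one-parameter group argument on $\ker S$, replaces the paper's second reduction to a purely nilpotent flow via integrating $D_s$ to $\Aut(G)$.

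There is, however, a genuine gap in your $(\Rightarrow)$ for (b). You write that ``boundedness of orbits through the elements $\Ad(\exp sy)$ reduces to boundedness of trajectories of $e^{t\bar A}$ on $\g/\fz(\g)$.'' But the orbit through $\Ad(\exp sy)$ is $\{e^{s\,\ad(e^{t\ad x}y)}:t\in\R\}$, and from its relative compactness you cannot conclude that $\{\ad(e^{t\ad x}y):t\}$ is bounded: the map $A\mapsto e^{sA}$ on $\ad(\g)$ is not proper, since an elliptic $A$ of arbitrarily large norm still has $e^{sA}$ inside a fixed compact torus. Thus the passage from the nonlinear submanifold $\Ad(G)$ to the linear flow $e^{t\bar A}$ on $\g/\fz(\g)$ fails as stated, and your ``almost-periodicity'' remedy cannot start, because almost-periodicity of $e^{tS}$ presupposes exactly the spectral condition $\Spec(\ad x)\subseteq i\R$ you are trying to establish.

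The paper closes this gap in two separate moves. First, to obtain $\Spec(\ad x)\subseteq i\R$, it looks at the real subspace $\fn=\g\cap\sum_{\Re\lambda\geq a}\g_\C^\lambda(x)$ for the maximal positive real part $a$; since $[\g_\C^\lambda,\g_\C^\mu]\subseteq\g_\C^{\lambda+\mu}$, every $y\in\fn$ is ad-nilpotent, so $\exp:\ad\fn\to\Ad(N)$ is a diffeomorphism onto a closed unipotent subgroup---on this particular subspace the exponential \emph{is} proper, and $e^{t\ad x}y\to\infty$ forces the $\Ad$-orbit to diverge. Only after $S$ is known to be elliptic does the paper split $\oline\alpha_t=\oline\beta_t\oline\gamma_t$ with $\oline\beta_t$ in a compact group and $\oline\gamma_t$ unipotent; then relative compactness of each $\oline\gamma$-orbit, together with the fact that $t\mapsto e^{tN}\phi e^{-tN}$ is polynomial (the paper cites Kraft's closed-orbit theorem, but ``bounded polynomial is constant'' already suffices), forces $e^{tN}$ to centralize $\Ad(G)$, i.e.\ $N(\g)\subseteq\fz(\g)$. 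Inserting this two-step argument in place of your single ``standard fact'' reduction makes your proof complete.
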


\begin{prf} Is it easy  to see that $x \in \comp(\g)$ implies that all
  orbits $\alpha_\R(g) \subeq G$ have compact closure. So the main
  point is the reverse argument.

\nin  {\bf Step 1:} First we show that relative compactness of the orbits in
  $\Ad(G)$ implies that all eigenvalues of $\ad x$ are purely imaginary: 
  Let $\g_\C^\lambda(x)$ be the generalized  $\lambda$-eigenspace
  of $\ad x$  in $\g_\C$. Then 
$\oline{\g_\C^\lambda(x)} =       \g_\C^{\oline\lambda}(x)$ 
  shows that, for $a > 0$, 
  \[ \fn := \g \cap \sum_{\Re \lambda \geq a} \g_\C^\lambda(x) \]
  is a real subalgebra of $\g$, consisting of nilpotent elements.

  If $a$ is the maximal real part of an eigenvalue and $a > 0$, then
  $\fn$ is abelian because
  $[\g_\C^\lambda, \g_\C^\mu] \subeq \g_\C^{\lambda+ \mu}$. Let $N := \exp \fn$ be the corresponding
  integral subgroup of $G$. As $\fn = [x,\fn]$,
    the subalgebra $\ad \fn$ consists of nilpotent
  derivations, so that $N := \exp(\ad \fn)$ is a unipotent group,
  hence in particular closed (\cite[Prop.~3.3.3, Prop.~11.2.15]{HN12})
  and the exponential function 
  $\exp \: \ad \fn \to \Ad(N)$ is a diffeomorphism.
  Moreover, $\fn \cap \fz(\g) = \{0\}$ implies that
  $\ad \res_{\fn} \: \fn \to \ad \fn$ is bijective.
  As $\lim_{t \to \infty} e^{t \ad x}y = \infty$ in the sense that the
  curve eventually leaves every compact subset, the same holds for
  its image in $\Ad(N)\subeq \Aut(\g)$. This implies that
  $\Re \lambda \leq 0$ for all eigenvalues of $\ad x$.
  The same argument applies to $-\ad x$ and thus entails
  that all eigenvalues of $\ad x$ are purely imaginary.

  \nin {\bf Step 2:} Assume now that $G$ is simply connected. 
We reduce the proof of (a) to the case where $\ad x$ is nilpotent: 
  From Step 1 we infer that, in the Jordan decomposition
  $D = D_s + D_n$ of $D := \ad x$ in $\der(\g)$
  (which is algebraic, hence splittable), the semisimple part
  $D_s$ is elliptic. As $G$ is simply connected,
  the derivation $D_s$ integrates to a flow $\beta \: \R \to \Aut(G)$.
  Since $D_s$ is elliptic, all orbits of this flow have compact closures.
  Moreover, $\alpha_\R$ and $\beta_\R$ commute, so that
  $\gamma_t := \beta_t^{-1} \alpha_t$ defines a flow on $G$
  for which all orbits are relatively compact.
  Its infinitesimal generator is the nilpotent derivation~$D_n$.

  \nin {\bf Step 3:} The nilpotent case: 
In view of \cite[Satz~III.1.1.4]{Kr84}, orbits of unipotent groups 
  are closed, which applies to the conjugation orbits
  \[ t \mapsto \Ad(\gamma_t(g)) = e^{t D_n} \Ad(g) e^{-t D_n}
  \in \Aut(G) \subeq \End(\g). \]
  As $\ad D_n$ is nilpotent on $\End(\g)$, all non-trivial orbits
  are non-compact and diffeomorphic to $\R$.
  Our assumption therefore implies that this action is trivial,
  which means that $e^{\R D_n}$ commutes with $\Ad(G)$. 
  We conclude that, for any $y \in \g$, we have
$0 = [D_n, \ad y] = \ad(D_n y),$ 
so that
\begin{equation}
  \label{eq:d-zent}
  D_n(\g) \subeq \fz(\g)  \quad \mbox{  and thus } \quad
  [\g,\g] \subeq \ker D_n.
\end{equation}
The fact that $D_n$ is the nilpotent component of
  $\ad x$, which annihilates $\fz(\g)$, shows that
  $\fz(\g) \subeq \ker D_n$, and thus $D_n^2 = 0$ by \eqref{eq:d-zent}.
  So
  \[ e^{tD_n} y = y + t D_n(y) \quad \mbox{ for } \quad t \in \R, y \in \g.\]
  For the automorphisms $\gamma_t  \in\Aut(G)$ with
  $\L(\gamma_t) = e^{tD_n}$ we then obtain 
  \[ \gamma_t(\exp y)  = \exp( y + tD_n(y))
    = \exp(y) \exp(t D_n(y)),\quad \mbox{ hence } \quad
    \gamma_\R(\exp y) = \exp y \exp(\R D_n(y)).\]
  If $D_n(y) \not=0$, then
  $\exp(\R D_n(y))$ is a non-trivial one-parameter group of
  $Z(G)_e \cong \R^d$ (\cite[Thm.~11.1.21]{HN12}), hence non-compact.
  This contradicts our assumption on relative compactness of the orbits.
  We thus obtain $D_n = 0$, i.e., that $x \in \comp(\g)$.

  \nin (b) Step 1 already implies that, if all orbits
  $\oline\alpha_\R(\Ad(g))$ are relatively compact in $\Aut(G)$,
  then $D_s$ is elliptic, i.e., $\Spec(\ad x) \subeq i \R$.
  Then  the flow $\oline\beta_t(\phi) = e^{t D_s} \phi e^{-tD_s}$
  also has compact orbit closures, and so has the linear unipotent flow 
  $\oline\gamma_t := \oline\beta_t^{-1} \oline\alpha_t$ generated by~$D_n$.
  The argument under Step 3 shows that this flow must be
  trivial, i.e., that
  $D_n(\g) \subeq \fz(\g)$.

  If, conversely, $x \in \g$ is such that $(\ad x)_s$ is elliptic and
  $(\ad x)_n(\g) \subeq \fz(\g)$, then $(\ad x)_n$ generates the trivial
  flow on $\Ad(G)$, and
  \[ \oline\alpha_t(\phi) = e^{t (\ad x)_s} \phi e^{-t (\ad x)_s}.\]
  But this flow has compact orbit closures because $(\ad x)_s$ is
  elliptic.

  \nin (c) Assume, in addition to \eqref{eq:spec-nil}, that
  $\g = [\g,\g]$. As the derivation $(\ad x)_n$ maps
  into the center, $[\g,\g] \subeq \ker (\ad x)_n$,
  so that $(\ad x)_n = 0$. In this case we obtain
  that $\ad x$ is elliptic, i.e., $x \in \comp(\g)$.
    \end{prf}

    \begin{ex} Here is a simple example that shows that, in general
      the condition of relative compactness of the orbits
      $\alpha_\R(g)$ in $G$  does not imply that $x \in \comp(\g)$.
  We consider the \break $3$-dimensional Heisenberg--Lie algebra
  \[ \g = \Bigg\{ \pmat{
    0 & a & c \\
    0 & 0 & b \\
    0 & 0 & 0} \: a,b,c \in \R \Bigg\} \]
  with the basis
  \[ p := \pmat{
    0 & 1 & 0 \\ 
    0 & 0 & 0 \\ 
    0 & 0 & 0}, \quad 
    q := \pmat{
    0 & 0 & 0 \\ 
    0 & 0 & 1 \\ 
    0 & 0 & 0} \quad \mbox{ and } \quad 
    z := \pmat{
    0 & 0 & 1 \\ 
    0 & 0 & 0 \\ 
    0 & 0 & 0}\]
    that satisfies
    \[ [p,q] = z, \quad [p,z] = [q,z] = 0.\]
    The corresponding Lie group
    \[ H := 
    \Bigg\{ \pmat{
    1 & a & c \\
    0 & 1 & b \\
    0 & 0 & 1} \: a,b,c \in \R \Bigg\} \]
    is simply connected and
    \[ \Gamma :=  \Bigg\{ \pmat{
    1 & 0 & c \\
    0 & 1 & 0 \\
    0 & 0 & 1} \: c \in \Z \Bigg\} \]
    is a discrete central subgroup, so that
    $G := H/\Gamma$ is a $3$-dimensional Lie group
    with $\pi_1(G) \cong \Gamma \cong \Z$. 

    The conjugation flow generated by $\ad p$ acts by
    \[ \alpha_t(\exp x) = \exp(e^{t \ad p}x)
    = \exp(x + t[p,x])= \exp(x) \exp(t[p,x]) \in \exp(x) Z(G).\]
    Since $Z(G) = \exp(\R z) \cong \R/\Z$ is compact, all orbits of this
    flows on $G$ are compact, but
    \[ p \not\in \comp(\g) = \R z.\]

    This examples also appears in \cite{Mo51} as a counterexample to a statement
    of A.~Weil asserting that if a locally compact group possesses a
    compact identity neighborhood invariant under all inner automorphisms,
    then there are arbitrarily small identity neighborhoods with this
    property.    \begin{footnote}{We thank A.~Valette for pointing out this
        reference.}     
    \end{footnote}
\end{ex}

\section{Connected components of $G^{se}$} 
\mlabel{sec:2}

In this section we address the description of the
connected components of the open subset~$G^{se}$
of stably elliptic elements.
In particular, we assume that $G^{se} \not=\eset$. 
As $G$ is connected, all connected components of $G^{se}$ are
invariant under conjugation. Using the fact that
$\g$ has a compactly embedded Cartan subalgebra $\ft$
(Proposition~\ref{prop:c.1ax}) and
all elements of $\g^{se}$ are conjugate to elements in $\ft$,
we use the root decomposition with respect to $\ft$ to 
label connected components of  $G^{se}$ by $\Z$-valued functions
on the set of non-compact roots (Theorem~\ref{thm:conncomp}).

\subsection{Root decomposition}

\begin{defn} (a) 
Let $\ft \subeq \g$ be a compactly embedded Cartan 
subalgebra, $\g_{\C}$ the complexification of $\g$, 
$z= x + i y \mapsto z^* := - x + iy$
the corresponding involution, and $\ft_\C$ the 
corresponding Cartan subalgebra of $\g_{\C}$. 
For a linear functional $\alpha \in \ft_\C^*$, we define the
{\it root space} 
\[ \g_{\C}^\alpha := \{ x \in \g_{\C} : (\forall y \in \ft_\C)\ [y,x] = 
\alpha(y)x\} \] 
 and write  
\[ \Delta := \Delta(\g_\C, \ft_\C) 
:= \{ \alpha \in \ft_\C^*\setminus \{0\}: 
\g_\C^\alpha \not= \{0\}\} \] 
for the set of {\it roots of $\g$.} 

\nin (b)  A root $\alpha \in \Delta$ is called {\it semisimple} 
if $\alpha([z,z^*]) \not= 0$ holds for an element $z \in \g_\C^\alpha$. 
In this case, $[\g_\C^\alpha, \g_\C^{-\alpha}] = \C [z, z^*]$ contains a 
unique element $\alpha^\vee$
with $\alpha(\alpha^\vee) = 2$ which we call the 
{\it coroot of $\alpha$}. 
We write $\Delta_s$ for the set of semisimple roots and call the roots in 
$\Delta_r := \Delta \setminus \Delta_s$ the {\it solvable roots}.  

A semisimple root $\alpha$ is called {\it compact} 
if $\alpha^\vee \in \R^+ [z,z^*]$, i.e.\  if 
$\alpha([z, z^*]) > 0$. All other roots are called {\it non-compact}. 
We write $\Delta_k$, resp., $\Delta_p$ for the set of compact, resp., 
non-compact roots. We also set $\Delta_{p,s} := \Delta_p \cap
\Delta_s$.   

\nin (c) For each compact root $\alpha \in \Delta_k$, the linear mapping 
$ s_\alpha \colon \ft \to \ft, x \mapsto x - \alpha(x) \alpha^\vee$ 
is a reflection in the hyperplane $\ker \alpha$. 
We write $\cW_\fk$ for the group generated by these
reflections. It is called the {\it Weyl group} 
of the pair $(\fk,\ft)$. 
According to \cite[Prop.~VII.2.10]{Ne99},
this group is finite. 
\end{defn}

\begin{defn} \mlabel{def:roots}
  (a)  A subset $\Delta^+ \subeq \Delta$ is 
called a {\it positive system} 
if there exists an element $x_0 \in i\ft$ with 
\[  \Delta^+ = \{ \alpha \in \Delta : \alpha(x_0) > 0\} \] 
and $\alpha(x_0) \not= 0$ holds for all $\alpha \in \Delta$. 
A positive system $\Delta^+$ is said to be 
{\it adapted} 
if for $\alpha\in \Delta_k$ 
and $\beta \in \Delta_p^+$ we have $ \beta(x_0) > \alpha(x_0)$ for 
some $x_0$ defining $\Delta^+$. In this case, we call 
$\Delta_p^+ := \Delta^+\cap \Delta_p$ an {\it adapted system of
  positive non-compact roots}.
In the following $\Delta_p^+$ always denotes an adapted positive system;
such systems always exist if $\fg$ is quasihermitian
(\cite[Prop.~VII.2.14]{Ne99}). 

\nin (b) We associate to an adapted system $\Delta_p^+$ 
of positive non-compact roots the convex cones 
\[  W^{\rm min}_\ft := W^{\rm min}_\ft(\Delta_p^+) := 
\cone(\{ i[{z_\alpha}, z_\alpha^*] \colon z_\alpha \in 
\g_\C^\alpha, \alpha\in \Delta_p^+ \}) \subeq \ft, \] 
and 
\[  W^{\rm max}_\ft := W^{\rm max}_\ft(\Delta_p^+) := \{ x \in  \ft \colon (\forall \alpha \in 
\Delta_p^+)\, i \alpha(x) \geq 0\}. \] 
\end{defn}

For $x \in \ft$ we then have
\[ \ker(\ad x) = \ft \oplus \g \cap \Big(\sum_{\alpha(x) = 0} \g_\C^\alpha\Big).\]
We conclude that the
cone $(W_\ft^{\rm max})^\circ \subeq \ft$ is a connected component of
$\ft \cap \comp(\g)^\circ$.

\subsection{Specification of connected components}

  An element $x \in \ft$ is contained in the interior of $\comp(\g)$
  if and only if its centralizer is compactly embedded
(\cite[Lemma~VII.1.7]{Ne99}), which means that
  $\alpha(x) \not=0$ for all non-compact roots $\alpha \in \Delta_p$.
  For the analogous picture on the group level, we observe that,
  for $x \in \ft$ and $g = \exp x$, we have
  \begin{equation}
    \label{eq:fixadg} \g^{\Ad(g)} = \ft \oplus
    \g \cap \Big(\sum_{\alpha(x) \in 2\pi i \Z}\g_\C^\alpha\Big).
  \end{equation}
The connected components of the set
\begin{equation}
  \label{eq:tse}
 \ft^{se} := \ft \cap \g^{se} = \ft \cap \exp^{-1}(G^{se})
 = \ft \setminus \bigcup_{\alpha \in \Delta_p} \alpha^{-1}(2\pi i \Z),
\end{equation}
are alcoves specified by inequalities of the form
\[ \ft^{se}(\vartheta) := \{ x \in \ft \:
(\forall \alpha \in \Delta_p^+)\
i\alpha(x) \in (2\pi \vartheta_\alpha, 2\pi(\vartheta_\alpha+1))\},\]
where $\vartheta = (\vartheta_\alpha) \in \Z^{\Delta_p^+}$. 
We call 
\begin{equation}
  \label{eq:basic} 
 \ft^{se}(0) = \{ x \in \ft \: (\forall \alpha \in \Delta_p^+)\
 0 < i\alpha(x) < 2\pi \}
\end{equation}
the {\it basic alcove}. Note that all the alcoves $\ft^{se}(\vartheta)$ are
open polyhedra because $\Delta_p$ is a finite set. We also observe that
\begin{equation}
  \label{eq:invbas-alc}
-\ft^{se}(0) = \ft^{se}(-1).  
\end{equation}
For every connected component of
$G^{se}$, the inverse image in $\ft^{se}$
is a union of certain alcoves $\ft^{se}(\vartheta)$.
So one has to determine which alcoves map into the same connected
component of~$G^{se}$.

Let $T := \exp \ft$ and $\Gamma_T := \exp^{-1}(e) \cap \ft$ be the
kernel of the exponential function of~$T$. This is a discrete
subgroup of $\ft$, contained in
\[ \Gamma_Z := \exp^{-1}(Z(G))
= \{ x \in \ft \: (\forall \alpha \in \Delta)
\ \alpha(x) \in 2 \pi i \Z\}.\]
Note that
\[ \ft^{se} = \ft^{se} + \Gamma_Z = \ft^{se} + \Gamma_T\]
and that $\Gamma_T = \Gamma_Z$ if and only if $Z(G) = \{e\}$. 
For $z \in \Gamma_Z$ we have
\begin{equation}
  \label{eq:alc-trans}
 z + \ft^{se}(\vartheta) = \ft^{se}(\vartheta')
  \quad \mbox{ with } \quad
  \vartheta_\alpha' = 
  \vartheta_\alpha + i \frac{\alpha(z)}{2\pi}.
\end{equation}

  \begin{lem} \mlabel{lem:tse-comp}
    The subsets
    \[ T^{se}(\vartheta) := \exp(\ft^{se}(\vartheta)) \]
    are the connected components of the open subset 
    $T^{se} := T \cap G^{se}$ of $T$.
    Moreover, the following are equivalent:
    \begin{itemize}
    \item[\rm(a)] $T^{se}(\vartheta) = T^{se}(\vartheta')$. 
    \item[\rm(b)] $\ft^{se}(\vartheta') \subeq \ft^{se}(\vartheta) + \Gamma_T$. 
    \item[\rm(c)] $\ft^{se}(\vartheta') \cap  (\ft^{se}(\vartheta) + \Gamma_T)
      \not=\eset$. 
    \end{itemize}
  \end{lem}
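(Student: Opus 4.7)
My approach is to exploit the fact that $\exp\colon \ft \to T$ is a covering homomorphism (its kernel $\Gamma_T$ is discrete in $\ft$), and then to use the alcove description of $\ft^{se}$ to track how the $\Gamma_T$-action on $\ft$ acts on alcoves.

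First I would observe the basic setup: since $T = \exp\ft$ and $\exp\colon \ft \to T$ is a surjective Lie group homomorphism with discrete kernel $\Gamma_T$, it is a covering map. Because for $x \in \ft$ the condition $\exp x \in G^{se}$ is equivalent to $x \in \ft^{se}$, we get $\exp(\ft^{se}) = T^{se}$ and $\exp^{-1}(T^{se}) \cap \ft = \ft^{se}$. The alcoves $\ft^{se}(\vartheta)$ are convex, open, pairwise disjoint (the defining strict inequalities $2\pi\vartheta_\alpha < i\alpha(x) < 2\pi(\vartheta_\alpha+1)$ force different $\vartheta$'s to give disjoint sets), and they exhaust $\ft^{se}$. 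Hence they are precisely the connected components of $\ft^{se}$.

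Next I would establish the equivalence of (a)--(c). The implication (b) $\Rightarrow$ (c) is trivial since each alcove is non-empty. For (c) $\Rightarrow$ (a), pick $x' \in \ft^{se}(\vartheta') \cap (\ft^{se}(\vartheta) + \Gamma_T)$ and write $x' = x + \gamma$ with $x \in \ft^{se}(\vartheta)$ and $\gamma \in \Gamma_T \subseteq \Gamma_Z$. Formula \eqref{eq:alc-trans} then shows that $\gamma + \ft^{se}(\vartheta) = \ft^{se}(\vartheta'')$ is again an alcove; since it contains $x'$, the disjointness of alcoves forces $\ft^{se}(\vartheta'') = \ft^{se}(\vartheta')$, and therefore
\[ T^{se}(\vartheta') = \exp(\ft^{se}(\vartheta')) = \exp(\gamma + \ft^{se}(\vartheta)) = \exp(\ft^{se}(\vartheta)) = T^{se}(\vartheta), \]
using $\exp\gamma = e$. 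For (a) $\Rightarrow$ (b), any $x' \in \ft^{se}(\vartheta')$ satisfies $\exp x' \in T^{se}(\vartheta) = \exp(\ft^{se}(\vartheta))$, so $\exp x' = \exp x$ for some $x \in \ft^{se}(\vartheta)$, whence $x' - x \in \Gamma_T$ and $x' \in \ft^{se}(\vartheta) + \Gamma_T$.

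Finally I would deduce the connected-component statement. Each $T^{se}(\vartheta)$ is the continuous image of the connected set $\ft^{se}(\vartheta)$, hence connected, and it is open in $T^{se}$ because $\exp$ is a covering map. Their union is $T^{se}$. If $T^{se}(\vartheta) \cap T^{se}(\vartheta') \neq \eset$, then there exist $x \in \ft^{se}(\vartheta)$, $x' \in \ft^{se}(\vartheta')$ with $\exp x = \exp x'$, so $x' - x \in \Gamma_T$; this is condition (c), and by the equivalence just proved, $T^{se}(\vartheta) = T^{se}(\vartheta')$. Thus the $T^{se}(\vartheta)$ form a partition of $T^{se}$ into open connected sets, so they are precisely its connected components. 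The only point requiring care is the use of \eqref{eq:alc-trans}, which relies on $\Gamma_T \subseteq \Gamma_Z$ so that $\Gamma_T$-translation preserves the alcove structure; this inclusion is built into the definitions.
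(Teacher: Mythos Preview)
Your proof is correct and follows essentially the same approach as the paper: both exploit that $\exp\colon \ft\to T$ is a covering with kernel $\Gamma_T$, that the alcoves $\ft^{se}(\vartheta)$ are the connected components of $\ft^{se}$, and that $\Gamma_T$-translation permutes alcoves (via \eqref{eq:alc-trans}). The paper's version is terser---it states $\exp_T^{-1}(T^{se}(\vartheta)) = \ft^{se}(\vartheta) + \Gamma_T$ and immediately declares (a)$\Rightarrow$(b)$\Rightarrow$(c) trivial, then argues (c)$\Rightarrow$(a) exactly as you do---and leaves the connected-component assertion for $T^{se}$ implicit, whereas you spell it out.
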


  \begin{prf} As $\exp_T^{-1}(T^{se}(\vartheta)) = \ft^{se}(\vartheta) + \Gamma_T$,
    the implications (a) $\Rarrow$ (b) $\Rarrow$ (c) are trivial.
    If (c) holds, then there exists a $z \in \Gamma_T$ and
    $x \in \ft^{se}(\vartheta)$ with $x + z \in \ft^{se}(\vartheta')$.
    Then $x$ is contained in the connected component $\ft^{se}(\vartheta') - z$
    of $\ft^{se}$, and thus
$\ft^{se}(\vartheta) = \ft^{se}(\vartheta') - z.$ 
    Applying $\exp_T$ to both sides now yields~(a).
  \end{prf}

\begin{prop} \mlabel{prop:3.4}
The subsets 
\[ \g^{se}(\vartheta) := \Ad(G)\ft^{se}(\vartheta) \subeq \g^{se}\]
have the following properties:
\begin{itemize}
\item[\rm(a)] They are the connected components of the open subset~$\g^{se}$,
  hence in particular open.
\item[\rm(b)] $\g^{se}(\vartheta_1) = \g^{se}(\vartheta_2)$
  if and only if there exists $\gamma \in \cW_\fk$ with $\gamma.\ft^{se}(\vartheta_1) = \ft^{se}(\vartheta_2)$. 
\end{itemize}
\end{prop}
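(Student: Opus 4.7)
\textbf{Proof plan for Proposition~\ref{prop:3.4}.}
The plan is to reduce everything to a Weyl group conjugacy statement on $\ft$, from which both (a) and (b) follow almost mechanically. First I would verify the covering property $\g^{se} = \bigcup_\vartheta \g^{se}(\vartheta)$: given $x \in \g^{se} \subeq \comp(\g) = \Ad(G)\fk$, write $x = \Ad(g)y$ with $y \in \fk$; since $\g^{se}$ is $\Ad(G)$-invariant (its defining condition, via Theorem~\ref{thm:compemb}, is $\Ad$-invariant), $y \in \fk \cap \g^{se}$, and standard compact Lie theory applied to $\fk$ makes $y$ $\Ad(K)$-conjugate to some $y' \in \ft \cap \g^{se} = \ft^{se}$, so $y' \in \ft^{se}(\vartheta)$ for some $\vartheta$. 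Next I would note that each $\g^{se}(\vartheta)$ is connected, since $\ft^{se}(\vartheta)$ is a convex open polyhedron and $G$ is connected, so $\g^{se}(\vartheta) = \Ad(G)\ft^{se}(\vartheta)$ is a continuous image of a connected set.

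The main step is the following \emph{key lemma}: if $x_1, x_2 \in \ft^{se}$ and $\Ad(h)x_1 = x_2$ for some $h \in G$, then there exists $n \in N_K(\ft)$ with $\Ad(n)x_1 = x_2$, so that the class $\gamma = [n] \in \cW_\fk$ satisfies $\gamma.x_1 = x_2$. To prove it, I would first observe $h(\exp x_1)h^{-1} = \exp(\Ad(h)x_1) = \exp x_2 \in K$. Since $\exp x_1 \in K^{se}$, Theorem~\ref{thm:compemb} yields that $\Fix(\Ad(\exp x_1))$ is compactly embedded, and Lemma~\ref{lem:compact-central} then forces $h \in K$. Both $\ft$ and $\Ad(h)\ft$ are then maximal abelian subalgebras of the compactly embedded subalgebra $\fz_\fk(x_2)$ (containing $x_2$), hence Cartan subalgebras of it; by compact Lie theory applied to $\fz_\fk(x_2)$, there exists $k_0 \in Z_K(x_2)$ with $\Ad(k_0)\Ad(h)\ft = \ft$. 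Setting $n := k_0 h \in N_K(\ft)$ gives $\Ad(n)x_1 = \Ad(k_0)x_2 = x_2$, as required. I expect this key lemma to be the main technical obstacle, and the delicate point is making sure Lemma~\ref{lem:compact-central} applies in the right direction.

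A complementary small step is to check that $\cW_\fk$ permutes the alcoves $\ft^{se}(\vartheta)$. Each $\gamma \in \cW_\fk$ is realized by $\Ad(n)$ with $n \in N_K(\ft)$, and since $\Ad(K)$ preserves the decomposition $\g = \fk \oplus \fp$, it maps root spaces to root spaces while preserving membership in $\fk_\C$ vs.\ $\fp_\C$; hence $\cW_\fk$ permutes $\Delta_p$, preserves the hyperplane family $\{\alpha^{-1}(2\pi i\Z)\}_{\alpha \in \Delta_p}$, and so permutes the alcoves of \eqref{eq:tse}.

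Finally I would assemble the pieces. For (a): if $\g^{se}(\vartheta_1) \cap \g^{se}(\vartheta_2) \neq \eset$, I pick a common point, pull back to $x_i \in \ft^{se}(\vartheta_i)$ with $x_2 = \Ad(g)x_1$, apply the key lemma to obtain $\gamma \in \cW_\fk$ with $\gamma.x_1 = x_2$; then $\gamma.\ft^{se}(\vartheta_1)$ is an alcove containing $x_2$, hence equals $\ft^{se}(\vartheta_2)$, and applying $\Ad(G)$ yields $\g^{se}(\vartheta_1) = \g^{se}(\vartheta_2)$. Combined with connectedness and covering, this shows the $\g^{se}(\vartheta)$ are precisely the connected components; they are therefore open in the locally connected open set $\g^{se}$. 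For (b), the forward direction is exactly the argument just given, while the reverse direction is immediate: if $\gamma.\ft^{se}(\vartheta_1) = \ft^{se}(\vartheta_2)$ with $\gamma = [n]$, $n \in N_K(\ft) \subeq G$, then $\g^{se}(\vartheta_2) = \Ad(G)\Ad(n)\ft^{se}(\vartheta_1) = \Ad(G)\ft^{se}(\vartheta_1) = \g^{se}(\vartheta_1)$.
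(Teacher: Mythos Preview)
Your key lemma and part~(b) are correct; in fact your key lemma is a self-contained proof (via Lemma~\ref{lem:compact-central}) of the special case of \cite[Lemma~VIII.1.1]{Ne99} that the paper simply cites, namely $\Ad(G)x \cap \ft = \cW_\fk.x$ for $x \in \ft$. The covering and connectedness steps are also fine.

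There is, however, a genuine gap in your argument for~(a). From ``connected, cover $\g^{se}$, and pairwise disjoint-or-equal'' you conclude that the $\g^{se}(\vartheta)$ are the connected components, and \emph{then} that they are open. But a partition of a space into connected pieces need not coincide with its decomposition into connected components unless the pieces are already known to be open (or closed): consider $[0,1] = [0,\tfrac12) \cup [\tfrac12,1]$. So openness must be established \emph{first}; only then does your disjoint-or-equal argument identify the $\g^{se}(\vartheta)$ with the components. The paper does precisely this missing step: it invokes the polar diffeomorphism of Proposition~\ref{prop:2.11}(a) to reduce openness of $\g^{se}(\vartheta) = \Ad(G)\ft^{se}(\vartheta)$ in $\g$ to openness of $\fk^{se}(\vartheta) = \Ad(K)\ft^{se}(\vartheta)$ in $\fk$, and then cites \cite[Lemma~III.5]{Ne96} for the latter (a statement about saturations of open Weyl-chamber pieces under a compact group action). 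Note that the naive submersion argument for the map $G \times \ft^{se}(\vartheta) \to \g$, $(g,x) \mapsto \Ad(g)x$, fails at non-regular $x$ (where some compact root vanishes), so the reduction to the $\fk$-level is genuinely needed. With that openness step inserted, your assembly for~(a) goes through and the remainder of your proof is fine.
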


\begin{prf} (a) Clearly, $\g^{se}(\vartheta)$
is connected. To see that it  is open, we write is as
\[ \g^{se}(\vartheta) = \Ad(G)\fk^{se}(\vartheta) \quad \mbox{ with }  \quad
\fk^{se}(\vartheta) = \Ad(K)\ft^{se}(\vartheta).\]
In view of Proposition~\ref{prop:2.11}(b), it suffices to see that
$\fk^{se}(\vartheta)$ is open, which follows from \cite[Lemma~III.5]{Ne96}
(see also Remark~\ref{rem:3.2} below).
We conclude that the sets $\g^{se}(\vartheta)$ are the connected components of $\g^{se}$.

\nin (b) As their are connected components of $\g^{se}$,
two sets $\g^{se}(\vartheta_1)$ and $\g^{se}(\vartheta_2)$ 
coincide if they intersect. 
For $x \in \ft$ we have $\Ad(G)x \cap \ft = \cW_\fk.x$
(\cite[Lemma~VIII.1.1]{Ne99}), so that
$\g^{se}(\vartheta_1) = \g^{se}(\vartheta_2)$ if and only if
\[ \ft^{se}(\vartheta_1) \cap \cW_\fk.\ft^{se}(\vartheta_2) \not=\eset.\] 
As the  action of $\cW_\fk$ on $\Delta_p$
induces an action on the set of connected components of $\ft^{se}$,
the assertion follows.   
\end{prf}

\begin{rem}  \mlabel{rem:3.2}
The derivative of the exponential function
$\exp \:  \g \to G$ is given by
\[  \dd\exp(x)y = \exp x.\frac{\1-e^{-\ad x}}{\ad x}y,\]
hence invertible if and only if
\[
x \in \regexp(\g) := \bigl\{ x \in\g\: \Spec(\ad x) \cap2\pi i \Z
\subeq\{0\}\bigr\}.\]
From \eqref{eq:cent2} we thus obtain that, for compact elements
$x \in  \g$:
\[ x \in \regexp(\g) \quad \Leftrightarrow \quad
  \g^{e^{\ad x}} = \ker(\ad x).\]
For $x \in \ft^{se}$, the left hand side $\g^{e^{\ad x}}$ is compactly embedded,
but may be larger than $\ker(\ad x)$ if compact roots vanish on~$x$.
\end{rem}

  \begin{lem} \mlabel{lem:expregx} 
      If $\Delta^+$ is an adapted positive system
    and $\Delta_k \subeq \Delta_p^+ - \Delta_p^+,$ 
    then
    \[    \fg^{se}(0) \subeq \regexp(\g).\]
\end{lem}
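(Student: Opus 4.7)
The plan is to reduce the claim to a statement about roots evaluated on $x \in \ft^{se}(0)$, using the characterization of $\regexp(\g)$ provided in Remark~\ref{rem:3.2}. First I observe that $\regexp(\g)$ is $\Ad(G)$-invariant, since for $g \in G$ the operators $\ad(\Ad(g)x) = \Ad(g) (\ad x) \Ad(g)^{-1}$ and $\ad x$ are conjugate and hence have the same spectrum. Because $\g^{se}(0) = \Ad(G) \ft^{se}(0)$, it therefore suffices to prove the inclusion $\ft^{se}(0) \subeq \regexp(\g)$.

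Fix $x \in \ft^{se}(0)$. Since $\ft$ is compactly embedded, $x$ is a compact element, so Remark~\ref{rem:3.2} tells me that $x \in \regexp(\g)$ if and only if $\g^{e^{\ad x}} = \ker(\ad x)$. Using \eqref{eq:fixadg}, the inclusion $\ker(\ad x) \subeq \g^{e^{\ad x}}$ is automatic, and the reverse inclusion is equivalent to the implication
\[
\alpha \in \Delta, \ \alpha(x) \in 2\pi i \Z \quad \Longrightarrow \quad \alpha(x) = 0.
\]
So the whole task reduces to verifying this implication for every root $\alpha$.

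For non-compact roots the verification is immediate from the definition \eqref{eq:basic} of the basic alcove: if $\alpha \in \Delta_p^+$, then $0 < i\alpha(x) < 2\pi$, while for $\alpha \in -\Delta_p^+$ the same inequalities yield $-2\pi < i\alpha(x) < 0$; in both cases $\alpha(x) \not\in 2\pi i \Z$. For compact roots I use the hypothesis $\Delta_k \subeq \Delta_p^+ - \Delta_p^+$: write $\alpha = \beta - \gamma$ with $\beta, \gamma \in \Delta_p^+$; then $i\alpha(x) = i\beta(x) - i\gamma(x)$ lies in the open interval $(-2\pi, 2\pi)$, and the only element of $2\pi i \Z$ it can represent is $0$.

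The main (and essentially only) obstacle is the compact-root case, which is precisely where the extra assumption $\Delta_k \subeq \Delta_p^+ - \Delta_p^+$ is needed; without it, compact roots could take values in $2\pi i \Z^\times$ on $\ft^{se}(0)$ and the inclusion into $\regexp(\g)$ would fail. The rest is bookkeeping with the alcove inequalities and the root decomposition.
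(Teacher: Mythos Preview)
Your proof is correct and follows essentially the same route as the paper: reduce by $\Ad(G)$-invariance to $x \in \ft^{se}(0)$, then verify the spectral condition $\Spec(\ad x) \cap 2\pi i \Z \subeq \{0\}$ root by root, handling non-compact roots directly from the alcove inequalities and compact roots via the hypothesis $\Delta_k \subeq \Delta_p^+ - \Delta_p^+$. The only cosmetic difference is that you pass through the fixed-point characterization in Remark~\ref{rem:3.2}, whereas the paper works directly with the definition of $\regexp(\g)$; the underlying computation is identical.
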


\begin{prf} Let $x \in \g^{se}(0) = \Ad(G) \ft^{se}(0)$.
  To see that $\exp$ is regular in $x$, we may w.l.o.g.\ assume that
  $x \in \ft^{se}(0)$. Clearly,
  $i\alpha(x) \in (0,2\pi)$ for $\alpha \in \Delta_p^+$,
  so that $\{\alpha(x)\} \cap 2\pi i \Z \subeq \{0\}$.
  For $\alpha \in \Delta_k$, our assumption yields
  $\beta,\gamma \in \Delta_p^+$ with $\alpha = \beta - \gamma$.
  As both $i\beta(x)$ and $i\gamma(x)$ are contained in the
  interval $(0,2\pi)$, their difference
  $i\alpha(x)$ is contained in the open interval $(-2\pi,2\pi)$,
  and therefore $\{\alpha(x) \} \cap 2\pi i \Z \subeq \{0\}$
  also holds in this case. This implies that
  $\Spec(\ad x) \cap 2\pi i \Z \subeq \{0\}$, i.e.,
  $x \in \regexp(\g)$.   
\end{prf}

\begin{rem} \mlabel{rem:cont-quot}
  Let $T_+ \subeq T$ be an alcove that meets every conjugacy class of $K$
  exactly once, i.e., $T_+ =  \exp(\ft_{++})$, where
  $\ft_{++} \subeq \ft$ is a closed convex subset that is a fundamental domain
  for the action of the affine Weyl group $\Gamma_T \rtimes \cW_\fk$
  (\cite[Lemma~12.4.25]{HN12}). As $K = V \times C$, where $V \cong \R^d$ and
  $C$ is compact, we have $T_+ = V \times T_+^C$, where $T_+^C$ is an alcove
  of the compact group $C$. Let
  \[ q \: K \to T_+ \cong K/{\rm conj} \]
  be the quotient map, mapping $k$ to $uku^{-1} \in T_+$.
  We claim that $q$ is a continuous map. In view of the product decomposition
  $K = V \times C$ and $q(v,c) = (v,q(c))$, we only have to conjugate
  with elements of $C$. Assume that $k_n = (v_n,c_n) \to k = (v,c)$,
  i.e., $v_n \to v$ and $c_n \to c$. Then
  \[ q(k_n) = (v_n, t_n) = (v_n, d_n c_n d_n^{-1})
    \quad \mbox{ with } \quad d_n \in C.\]
  We have to show that $q(k_n) \to q(k)$. Passing to a subsequence,
  we may assume that $d_n \to d$ in the compact group~$C$.
  Then $d_n c_n d_n^{-1} \to d c d^{-1}$ implies $dcd^{-1} \in T_+$
  (a closed subset of $T$), hence
  $q(k_n) \to (v, dcd^{-1}) = q(k)$.   
\end{rem}

\subsection{Classification of connected components}

As the exponential function need not be regular on the open subsets
$\g^{se}(\vartheta)$, for instance if $G$ is a compact non-abelian group,
it is not clear if their exponential image
is open, but the following theorem shows in particular that this is  always
the case. 

\begin{thm} \mlabel{thm:conncomp}
  The  connected components of $G^{se}$ are of the form 
  \begin{equation}
    \label{eq:gs-comp}
 G^{se}(\vartheta):= \exp(\g^{se}(\vartheta))  
 = \exp(\Ad(G)\ft^{se}(\vartheta)) = \bigcup_{g \in G} g T^{se}(\vartheta) g^{-1}
  \end{equation}
for some tuple $(\vartheta_\alpha)_{\alpha \in \Delta_p}$ of integers.
If $\cW_\fk \cong N_K(T)/T$ is the Weyl group of $(\g,\ft)$, acting on $T$,
then
\[ G^{se}(\vartheta_1) = G^{se}(\vartheta_2)\quad \Leftrightarrow \quad
  T^{se}(\vartheta_1) \in \cW_\fk.T^{se}(\vartheta_2).  \] 
\end{thm}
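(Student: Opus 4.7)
The plan is to establish, in order, that each set $G^{se}(\vartheta)$ in \eqref{eq:gs-comp} is well-defined, connected, and open, that any two such sets are either equal or disjoint, and that the equality relation is governed by the Weyl group~$\cW_\fk$.

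First I observe that the three descriptions agree: $\g^{se}(\vartheta) = \Ad(G)\ft^{se}(\vartheta)$ by Proposition~\ref{prop:3.4}, and the identity $\exp(\Ad(g)x) = g\exp(x)g^{-1}$ together with Lemma~\ref{lem:tse-comp} yields the conjugacy form $\bigcup_{g\in G} g T^{se}(\vartheta) g^{-1}$. This exhibits $G^{se}(\vartheta)$ as the image of the connected product space $G \times T^{se}(\vartheta)$ under the continuous conjugation map, hence it is connected. The $G^{se}(\vartheta)$ cover $G^{se}$ because $G^{se} = \exp(\g^{se})$ by Lemma~\ref{lem:c.1a}(vi) and $\g^{se}$ is the union of its connected components $\g^{se}(\vartheta)$.

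The key step is the equivalence
\[ G^{se}(\vartheta_1) \cap G^{se}(\vartheta_2) \ne \eset \quad \Longleftrightarrow \quad T^{se}(\vartheta_1) \in \cW_\fk.T^{se}(\vartheta_2). \]
The implication ``$\Leftarrow$'' is immediate since $N_K(T) \subeq G$ and $G^{se}(\vartheta)$ is defined as a union of $G$-conjugates. For ``$\Rightarrow$'', pick $x$ in the intersection and write $x = g_i t_i g_i^{-1}$ with $t_i \in T^{se}(\vartheta_i)$, so that $t_1 = g t_2 g^{-1}$ for $g := g_1^{-1}g_2 \in G$. Since $t_2 \in K \cap G^{se}$, Theorem~\ref{thm:compemb} says that $\Fix(\Ad(t_2))$ is compactly embedded, and then Lemma~\ref{lem:compact-central} applied to $t_2$ forces $g \in K$. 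Using the product decomposition $K = V \times C$ with $V$ central and $C$ compact from Remark~\ref{rem:cont-quot}, the classical Weyl conjugacy theorem for the compact group~$C$ provides $w \in N_K(T)$ with $wt_2w^{-1} = t_1$. Then $wT^{se}(\vartheta_2)w^{-1}$ is a connected open subset of $T^{se}$ (since conjugation by $w$ preserves $T$ and $T^{se}$) containing $t_1$, so it coincides with the connected component $T^{se}(\vartheta_1)$.

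The main obstacle is to verify that each $G^{se}(\vartheta)$ is actually open in $G$, not merely a connected piece of $G^{se}$. Here I would invoke the polar decomposition diffeomorphism $\phi \colon M \times K^{se} \to G^{se}$ of Proposition~\ref{prop:2.11}(b). Another application of Lemma~\ref{lem:compact-central} (to rule out extraneous conjugators) identifies $K^{se}(\vartheta) := K \cap G^{se}(\vartheta) = \bigcup_{k \in K} k T^{se}(\vartheta) k^{-1}$, from which $\phi^{-1}(G^{se}(\vartheta)) = M \times K^{se}(\vartheta)$; hence it suffices to show $K^{se}(\vartheta)$ is open in $K$. For this I use the continuous quotient $q \colon K \to T_+$ of Remark~\ref{rem:cont-quot}. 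Since $K^{se}(\vartheta)$ is $K$-conjugation invariant and two elements of $T$ are $K$-conjugate iff they are $\cW_\fk$-conjugate, we obtain $K^{se}(\vartheta) = q^{-1}\bigl(T_+ \cap \cW_\fk.T^{se}(\vartheta)\bigr)$, the preimage of a finite union of open subsets of~$T_+$ under the continuous map~$q$. Combining the three steps, $\{G^{se}(\vartheta)\}$ is an open cover of $G^{se}$ by connected sets that are pairwise equal or disjoint, hence precisely the connected components of $G^{se}$, with labels identified modulo the $\cW_\fk$-action on alcoves as claimed.
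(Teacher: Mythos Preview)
Your proof is correct and follows essentially the same route as the paper's: both reduce to $K^{se}$ via the polar decomposition of Proposition~\ref{prop:2.11}(b), use the continuous quotient $q\colon K\to T_+$ from Remark~\ref{rem:cont-quot} to establish openness of the $K^{se}(\vartheta)$, and invoke Weyl conjugacy ($\Conj(k)\cap T=\cW_\fk.k$) for the classification of components. Your version is somewhat more explicit---you spell out the use of Lemma~\ref{lem:compact-central} to force conjugators into $K$ and to identify $K\cap G^{se}(\vartheta)$ with the $K$-saturation of $T^{se}(\vartheta)$, and you write $K^{se}(\vartheta)=q^{-1}\bigl(T_+\cap\cW_\fk.T^{se}(\vartheta)\bigr)$ explicitly---whereas the paper compresses these steps into the phrase ``from the compactness of $\Ad(K)$ it follows that\ldots''.
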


\begin{prf} Proposition~\ref{prop:2.11}(c) implies that intersecting with $K$
  yields a bijection between connected components of $G^{se}$ and those of $K^{se}$,
  hence   reduces the problem to describe
the connected components of $G^{se}$ to the connected components
of $K^{se}$.

Since every conjugacy class in $K$ intersects $T$, we have
\[ K^{se} = \bigcup_{k \in K} k T^{se} k^{-1} \quad \mbox{ for }\quad
  T^{se} = T \cap G^{se}.\]
By Lemma~\ref{lem:tse-comp}, the connected components of $T^{se}$ are
of the form
\[ T^{se}(\vartheta) := \exp(\ft^{se}(\vartheta)) \quad \mbox{ with }\quad
T^{se}(\vartheta_1) = T^{se}(\vartheta_2) \quad \Leftrightarrow \quad
(\ft^{se}(\vartheta_1) + \Gamma_T) \cap \ft^{se}(\vartheta_2)\not=\eset.\] 

For $x \in \ft$ we have $\exp x \in T^{se}$ if and only if $x \in \ft^{se}$.
In this context we may also fix an ``alcove''
$T_+ \subeq T$ that intersects every $K$-conjugacy class exactly
once and obtain a continuous map $K \to T_+$, mapping
each element to the unique representative of its conjugacy class
(Remark~\ref{rem:cont-quot}).
From the compactness of $\Ad(K) \cong K/Z(K)$, it now follows that
the connected subsets
\[ K^{se}(\vartheta) := \exp(\Ad(K) \ft^{se}(\vartheta)) \subeq K \]
are open in $K$,
hence connected components of $K^{se}$. This proves the first part of the theorem.

To characterize when $G^{se}(\vartheta_1) = G^{se}(\vartheta_2)$,
we first observe that this is equivalent to
$K^{se}(\vartheta_1) = K^{se}(\vartheta_2)$
(Proposition~\ref{prop:2.11}(c)).
For $k \in T$ and $\Conj(k) := \{ uku^{-1} \: u \in K\}$, we have
\[ \Conj(k) \cap T = \cW_\fk.k \]
(cf.~\cite[Prop.~12.2.13]{HN12}).\begin{footnote}
{If $uku^{-1} \in T$, then
$u^{-1} T u \subeq Z_K(k)$ is a Cartan subgroup, hence conjugate to $T$,
so that there exists $v \in Z_K(k)$ with
$v^{-1} u^{-1} T uv = T$, and then $uv \in N_K(T)$ satisfies
$uvk(uv)^{-1} = uku^{-1}$.}
\end{footnote}
Therefore
\[ K^{se}(\vartheta) \cap T = \cW_\fk.T^{se}(\vartheta),\]
and thus 
\[ K^{se}(\vartheta_1) = K^{se}(\vartheta_2)\quad \Leftrightarrow \quad
  T^{se}(\vartheta_1) \in \cW_\fk.T^{se}(\vartheta_2).  \] 
This completes the proof.
\end{prf}

  \begin{rem} Recall from \eqref{eq:alc-trans} that, for
    $z \in \Gamma_Z$, we have 
\[  z + \ft^{se}(\vartheta) = \ft^{se}(\vartheta')
  \quad \mbox{ with } \quad
  \vartheta_\alpha' = 
  \vartheta_\alpha + i \frac{\alpha(z)}{2\pi}.\]
On the group level, this implies
\[ \exp(z) T^{se}(\vartheta) = T^{se}(\vartheta'),\] 
and Theorem~\ref{thm:conncomp} now implies that
\begin{equation}
  \label{eq:gtheta-trans}
  \exp(z) G^{se}(\vartheta) = G^{se}(\vartheta').
\end{equation}
\end{rem}

\section{The case of simple hermitian Lie algebras}
\mlabel{sec:3}

In this section we specialize to the case where $\g$ is a simple 
real Lie algebra which is {\it hermitian}, i.e.,
the maximal compactly embedded subalgebras $\fk$ have a non-zero
center $\fz(\fk)$, and then $\fz(\fk)$ is one-dimensional
(\cite{Hel78}, \cite{Ne99}). 
We fix a connected Lie group $G$ with Lie algebra $\g$, and choose
$z \in \fz(\fk)$ such that $\Spec(\ad z) = \{0, \pm i \}$.
We pick a positive system $\Delta^+$ in such a way that
\[ \Delta_p^+ = \{ \alpha \in \Delta_p \: i\alpha(z) = 1\}.\]
Here we are mostly interested in the connected component
$G^{se}(0)$. Proposition~\ref{prop:3.11} shows that
this domain ``does not depend on $G$'', in the sense that
$\exp \: \g^{se}(0) \to G^{se}(0)$ is always a diffeomorphism. 
The Properness Theorem~\ref{properness:theorem},
that will be used below, provides important information
on the polar decomposition of $G^{se}(0)$ by asserting that
compact subsets of $K^{se}(0)$ correspond to closed subsets of~$G$.

\begin{rem}
  In general, different connected components of $G^{se}$
  may not be obtained from each other by multiplication with central
  elements. This follows from the fact that $\Ad(G)^{se}$ may not 
  be  connected, but the center of $\Ad(G)$ is trivial if $\g$ is a 
simple hermitian Lie algebra.
\end{rem}

\begin{ex} \mlabel{ex:herm}
  \nin (a) The Lie algebra
  \[ \g = \sp_{2n}(\R) = \{ x \in \gl_{2n}(\R) \: x^\top J + J x = 0 \}
    \quad \mbox{ for }  \quad J = \pmat{0& -1_n \\ 1_n & 0} \] 
  can also be characterized as the subspace $J \Sym_{2n}(\R)$ of
  $M_{2n}(\R)$. For the complex structure $J$ on $\R^{2n}$, 
  the subalgebra $\fk = \fu_n(\C) = \sp_{2n}(\R) \cap \gl_n(\C)$
  is maximal compactly embedded and
  the space $\ft$ of diagonal matrices
  $\bx = \diag(-i x_1, \ldots, -i x_n)$ is a compactly embedded
  Cartan subalgebra. 
  Then $\ft \cong i \R^n$ in a natural way, and the positive system
  $\Delta_p^+$ can be chosen such that
  \[ \Delta_p^+ = \{  \eps_i + \eps_j, \:
    1 \leq i \leq j \leq n\},\quad \mbox{   where } \quad
    \eps_j(\bx)  =  -i x_j.\]
Note that 
\[ \Delta_k = \{ \pm(\eps_j - \eps_k) \: j < k\} \cong A_{n-1}\]
is the root system of $\fu_n(\C)$,
and the Weyl group $\cW_\fk \cong S_n$ acts by permutation of the diagonal
entries. Here
  \begin{equation}
    \label{eq:ts0}
    \ft^{se}(0)= \{ \bx \: (\forall j)\ 0 < x_j < \pi\}.
  \end{equation}
    The element $z = \frac{1}{2}\diag(-i,\ldots, -i)$ satisfies
  $i\alpha(z) = 1$ for all $\alpha \in \Delta_p^+$.
  In terms of the complex structure $J$,   we have
  \[ W_\ft^{\max}=W_\ft^{\min} = \{ x \in \ft \: Jx \geq 0\},\]
  and this implies that
  Hence $W^{\rm max} = W^{\rm  min} = \{ x \in \sp_{2n}(\R) \: J x \geq 0\}$. 
Therefore $\mathfrak{g}^{se}(0)=\mathrm{Ad}(G)\mathfrak{t}^{se}(0)$
is contained in $W^{\rm max}$ and we derive that any 
\[ g\in G^{se}(0) = \exp(\g^{se}(0))\]
has the property that $\Spec(g) \subeq  \T \setminus \{-\1\}.$ 
This in turn implies that
$G^{se}(0)$ coincides with the domain
$\mathrm{Sp}_{\mathrm{ell}}^+(2n)$ in \cite{ABP22, He22}.

  We take a closer look at the case $n = 2$, where
  \[ \Delta_p^+ = \{ \eps_1 + \eps_2, 2 \eps_1, 2 \eps_2 \} \]
  is not linearly independent.
  Here
  \[\frac{1}{2\pi i} \Gamma_Z =  \{ (x_1, x_2) \:
    x_1, x_2 \in \shalf\Z, x_1 + x_2 \in \Z\},\]
  and
  \begin{align*}
 \frac{1}{2\pi i} \ft^{se}(0,0)
&    = \{ (x_1, x_2) \: 0 < x_1, x_2 < 1/2, 0 < x_1 + x_2 < 1\} \\
&    = \{ (x_1, x_2) \: 0 < x_1, x_2 < 1/2\} \cong \Big(0,\frac{1}{2}\Big)^2
      \end{align*}
  is a square in our coordinates.
  For
  \[ \alpha_1 := 2 \eps_1, \quad \alpha_2 := 2 \eps_2, \quad
    \alpha_3 := \eps_1 + \eps_2,\]
  we obtain 
  \[ \frac{1}{2\pi i} \ft^{se}(1,0,0)
    = \big\{ (x_1, x_2) \: \shalf < x_1 < 1, 0 < x_2 < \shalf,
    0 < x_1 + x_2 < 1\big\}\]
  which is a triangle. It follows in particular that
  $\ft^{se}(1,0,0)$ is not a translate of $\ft^{se}(0,0,0)$.

\nin (b) Let $\g = \su_{p,q}(\C)$, $n = p + q$, where 
$\ft \subeq \g$ is the space is diagonal matrices.
We identify $\ft$ with
\[ \{ x \in i \R^n \: x_1 + \cdots + x_n = 0\}.\] 
The positive system
  $\Delta_p^+$ can be chosen such that
  \[ \Delta_p^+ = \{ \eps_i - \eps_j \: i \leq p < j\},\quad \mbox{ 
    where } \quad \eps_j(\diag(ix_1, \ldots, i x_n))  =  i x_j.\]     

  We take a closer look at the case $p = 2$, $q = 1$, $n = 3$, where 
  \[ \Delta_p^+ = \{ \eps_1 - \eps_3, \eps_2 - \eps_3 \}, \]
  and
  \[     \alpha_1 := \eps_1 - \eps_3, \quad \alpha_2 := \eps_2 - \eps_3 \]
are linearly idenpendent. It follows in particular that
  all $\ft^{se}(\vartheta)$ are non-empty.

  To get a geometric picture, we eliminate $x_3 = - x_1 - x_2$
  and identify $\ft$ with $i \R^2$. Then
  \[ \alpha_1 = 2 \eps_1 + \eps_2 \quad \mbox{ and } \quad 
    \alpha_2 =  \eps_1 + 2\eps_2,\]
  and the subsets $\ft^{se}(\vartheta)$ become parallelograms
  whose vertices are contained in $\Gamma_Z$. We therefore have
  \[ \ft^{se}(\vartheta) = z + \ft^{se}(0) \quad \mbox{ for }\quad
    \vartheta_j = \frac{i \alpha_j(z)}{2\pi}.\] 
\end{ex}

\begin{ex} (a) For $G := \tilde\SL_2(\R)$, the
  compactly embedded Cartan subalgebra $\ft = \so_2(\R) = \fk$
  is $1$-dimensional and
  $\Delta = \{ \pm \alpha\}$.
  For $z \in \ft$ with $i\alpha(z) = 1$, we then have
  \[ \ft^{se}(\vartheta) = (2\pi \vartheta, 2\pi(\vartheta+1)) \cdot z.\]
  As $\Delta_k = \eset$, the Weyl group $\cW_\fk$ is trivial,
  so that all connected components
  $\g^{se}(\vartheta)$, $\vartheta \in \Z$, are distinct.
  Here the exponential function $\exp_T \: \ft\to T$ is a diffeomorphism,
  so that also the connected components
  $G^{se}(\vartheta)$, $k \in \Z$, are distinct.
  The center $Z(G) \cong \Z$, which is generated by
  $\exp(2\pi z)$, acts simply transitively on the connected components
  of $G^{se}$:
  \[ G^{se}(\vartheta) = \exp(2 \pi z)^\vartheta  G^{se}(0), \quad \vartheta \in \Z.\] 

  \nin (b) The picture changes for $G = \SL_2(\R)$, where
  $\ker(\exp_T) =  \Z 4\pi z$. Then
  \[ T^{se} = T^{se}(0) \cup T^{se}(1) \quad \mbox{ leads to } \quad 
    G^{se} = G^{se}(0) \cup G^{se}(1).\]

  \nin (c) For $G = \PSL_2(\R)$ with 
  $\ker(\exp_T) =  \Z 2\pi z$, we obtain 
  \[ T^{se} = T^{se}(0) \quad \mbox{ leads to } \quad 
    G^{se} = G^{se}(0),\]
  so that $G^{se}$ is connected. 
\end{ex}

  \begin{lem} \mlabel{lem:l1}
    Suppose that $\g$ is simple hermitian. 
    If $x \in \ft^{se}(0)$ and $z \in \Gamma_Z$ satisfy
    $\exp(x + z) \in T^{se}(0)$, then $z \in \Gamma_T$, so that
    $\exp(x+z) = \exp(x)$. The same conclusion holds if
    \begin{equation}
      \label{eq:tse-shift}
 T^{se}(0) = T^{se}(\vartheta) \quad \mbox{ for } \quad
 \vartheta_\alpha = i \frac{\alpha(z)}{2\pi}.
    \end{equation}
\end{lem}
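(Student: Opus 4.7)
The plan is to reduce everything to a single linear-algebraic fact about how far apart two points of the basic alcove $\ft^{se}(0)$ can be in the directions of non-compact roots, and then use the hypothesis that $\g$ is \emph{simple} hermitian to promote ``vanishing on all of $\Delta_p$'' to ``equal zero'' in~$\ft$.

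First I would dispose of the second assertion. Assume $T^{se}(0) = T^{se}(\vartheta)$ with $\vartheta_\alpha = i\alpha(z)/(2\pi)$. By the translation identity \eqref{eq:alc-trans}, $\ft^{se}(\vartheta) = z + \ft^{se}(0)$, so for any choice of $x \in \ft^{se}(0)$ (which is non-empty) we get $x + z \in \ft^{se}(\vartheta)$ and hence $\exp(x+z) \in T^{se}(\vartheta) = T^{se}(0)$. Thus the hypothesis of the first assertion is satisfied, and it suffices to prove that one.

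For the first assertion, the hypothesis $\exp(x+z) \in T^{se}(0)$ means $x+z \in \exp_T^{-1}(T^{se}(0)) = \ft^{se}(0) + \Gamma_T$, so we can write $x + z = y + \gamma$ with $y \in \ft^{se}(0)$ and $\gamma \in \Gamma_T$. Set $v := y - x = z - \gamma$. Because $\Gamma_T \subseteq \Gamma_Z$, we have $v \in \Gamma_Z$, and in particular $i\alpha(v) \in 2\pi\Z$ for every $\alpha \in \Delta$. On the other hand, for $\alpha \in \Delta_p^+$ both $i\alpha(x)$ and $i\alpha(y)$ lie in the open interval $(0, 2\pi)$ by definition of $\ft^{se}(0)$, so
\[
-2\pi \;<\; i\alpha(v) \;=\; i\alpha(y) - i\alpha(x) \;<\; 2\pi.
\]
Integrality then forces $i\alpha(v) = 0$ for every $\alpha \in \Delta_p^+$, and hence for every $\alpha \in \Delta_p$.

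The main step, and the only place where simplicity is used, is to conclude from $\alpha(v) = 0$ for all $\alpha \in \Delta_p$ that $v = 0$. Equivalently, I would show $\bigcap_{\alpha \in \Delta_p} \ker\alpha = \{0\}$ in~$\ft$. Since $\g$ is simple hermitian, the ideal generated by $\fp$ is all of~$\g$; but $\fp + [\fp,\fp]$ is already stable under bracketing with $\fk$ (as $[\fk,\fp]\subseteq\fp$ and $[[\fp,\fp],\fp]\subseteq[\fk,\fp]\subseteq\fp$), so it is an ideal, forcing $[\fp,\fp] = \fk$. Consequently, if $v \in \ft \subseteq \fk$ annihilates $\fp_\C = \bigoplus_{\alpha \in \Delta_p}\g_\C^\alpha$, then $[v,\fp] = 0$ and $[v,\fk] = [v,[\fp,\fp]] = 0$, so $v \in \fz(\g) = \{0\}$. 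This gives $v = 0$, hence $z = \gamma \in \Gamma_T$ and $\exp(x+z) = \exp(x)$, completing the proof. The only potential obstacle is verifying this faithfulness/spanning fact, but in the simple hermitian setting it is a standard consequence of the $\fk\oplus\fp$ decomposition together with triviality of the center of~$\g$.
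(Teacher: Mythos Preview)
Your proof is correct and follows essentially the same route as the paper: write $z$ as an element of $\Gamma_T$ plus a difference $v$ of two points in $\ft^{se}(0)$, use the open interval $(0,2\pi)$ to force $\alpha(v)=0$ for all $\alpha\in\Delta_p^+$, and then invoke simplicity to conclude $v=0$. The only cosmetic differences are that the paper reduces the first assertion to the second via Lemma~\ref{lem:tse-comp} (you go the other way), and the paper simply asserts that $\Delta_p^+$ spans $i\ft^*$ whereas you supply the $[\fp,\fp]=\fk$ justification.
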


\begin{prf} From \eqref{eq:alc-trans}, we obtain
\[  z + \ft^{se}(0) = \ft^{se}(\vartheta)
  \quad \mbox{ with } \quad
  \vartheta_\alpha = i \frac{\alpha(z)}{2\pi},\]
and our assumption implies that 
$\exp(x + z) \in T^{se}(\vartheta) \cap T^{se}(0),$ 
so that the two connected components
$T^{se}(0)$ and $T^{se}(\vartheta)$ of $T^{se}$ coincide
(Lemma~\ref{lem:tse-comp}). 
With Lemma~\ref{lem:tse-comp}
this in turn implies that
\[ \ft^{se}(\vartheta) = \ft^{se}(0) + z' \quad \mbox{ for some}
  \quad z' \in \Gamma_T.\]
So $\alpha(z) - \alpha(z')$ for all $\alpha \in \Delta_p^+$. 
Next we use that $\Delta_p^+$ spans $i\ft^*$ (because
$\g$ is simple hermitian) to conclude that $z  =z' \in \Gamma_T$.   
\end{prf}

\begin{rem} Let $\ft_0 := (\Delta_p)^\bot \subeq \ft$.
  Then $\ft_0$ commutes with the ideal of $\g$ generated by
  any $\fk$-invariant complement $\fp$ of $\fk$.
  Let  $\g = \fr \rtimes \fs$ be a $\fk$-invariant  
  Levi decomposition (\cite[Lemma~14.3.3]{HN12}).
Write $\fs_c \subeq \fs$ for the maximal compact semisimple
  ideal and $\fs_{nc} \trile \fs$ for an ideal complementing $\fs_c$.
  Then $\ft_0$ commutes with $\fr \subeq \fp + \ft$
  and also with $\fs_{nc} \subeq \fp + [\fp,\fp]$. This shows that
  $\g = (\fr \rtimes \fs_{nc})  \rtimes \fs_c$, where $\fs_c$ is compact semisimple
  and $\ft_0$ commutes with $\fr + \fs_{nc}$. 
\end{rem}

\begin{prop} \mlabel{prop:3.11}
  Suppose that $\g$ is simple hermitian.
  The the following assertions hold:
  \begin{itemize}
  \item[\rm(a)] $\exp \: \g^{se}(0) \to G^{se}(0)$ is a diffeomorphism.
  \item[\rm(b)] The adjoint representation
    $\Ad \: G \to \Ad(G) \cong G/Z(G)$ maps $G^{se}(0)$ diffeomorphically
    onto $\Ad(G)^{se}(0)$.
  \end{itemize}
\end{prop}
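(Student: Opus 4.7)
The strategy for part (a) is to verify surjectivity, local regularity, and injectivity of $\exp\: \g^{se}(0) \to G^{se}(0)$ separately; part (b) then follows formally by applying (a) to both $G$ and $\Ad(G)$. Surjectivity is immediate from Theorem~\ref{thm:conncomp}. For regularity I invoke Lemma~\ref{lem:expregx}: its hypothesis $\Delta_k \subeq \Delta_p^+ - \Delta_p^+$ is the classical structural fact, a consequence of the Harish--Chandra decomposition of $\g_\C$, that every compact root of a simple hermitian $\g$ equals a difference of two non-compact positive roots. Hence $\g^{se}(0) \subeq \regexp(\g)$ and $\exp$ is a local diffeomorphism.

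For injectivity, Proposition~\ref{prop:2.11} provides compatible polar decomposition diffeomorphisms $M \times \fk^{se} \to \g^{se}$ and $M \times K^{se} \to G^{se}$ that intertwine $\exp$ with $\id_M \times \exp$, reducing the question to injectivity of $\exp\: \fk^{se}(0) \to K^{se}(0)$. Two structural properties of simple hermitian $\g$ drive the remainder. First, $\cW_\fk$ preserves the basic alcove $\ft^{se}(0)$: since compact roots vanish on $\fz(\fk) = \R z$, the Weyl group fixes $z$ pointwise and therefore stabilizes $\Delta_p^+ = \{\alpha \: i\alpha(z) > 0\}$. Second, $\ft^{se}(0)$ is a fundamental domain for $\Gamma_Z$-translations: for $x, x' \in \ft^{se}(0)$ with $x - x' \in \Gamma_Z$ one has $i\alpha(x - x') \in 2\pi\Z \cap (-2\pi,2\pi) = \{0\}$ for every $\alpha \in \Delta_p^+$, and since $\Delta_p^+$ spans $\ft^*$ for simple hermitian $\g$, this forces $x = x'$. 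Given $\exp(y_1) = \exp(y_2)$ with $y_j = \Ad(k_j) t_j$, $t_j \in \ft^{se}(0)$, conjugacy of $\exp(t_1), \exp(t_2)$ in $T$ yields some $w \in \cW_\fk$ with $\exp(w t_1) = \exp(t_2)$; these two facts then force $w t_1 = t_2$, so after adjusting $k_1$ by a representative of $w^{-1}$ I may assume $t_1 = t_2 =: t$, leaving $k := k_2^{-1} k_1 \in Z_K(\exp t)$.

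The heart of the argument is then the equality $Z_K(\exp t) = Z_K(t)$ for $t \in \ft^{se}(0)$. Their Lie algebras coincide because $t \in \regexp(\g)$ gives $\Fix(e^{\ad t}) = \ker(\ad t)$, so both centralizers have Lie algebra $\fz_\fk(t)$. For the component groups, I would use the standard description of centralizers in compactly embedded groups: both $\pi_0(Z_K(t))$ and $\pi_0(Z_K(\exp t))$ are naturally identified with quotients of the Weyl stabilizers $\{w \in \cW_\fk \: w t = t\}$ and $\{w \in \cW_\fk \: w \exp t = \exp t\}$ by the same subgroup (the Weyl group of the common identity component). A second application of the two facts above shows $w \exp t = \exp t \Leftrightarrow wt - t \in \Gamma_T \Leftrightarrow w t = t$, so the stabilizers agree, the component groups match, and $Z_K(\exp t) = Z_K(t)$. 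Hence $k \in Z_K(t)$, giving $\Ad(k_1) t = \Ad(k_2) t$ and $y_1 = y_2$.

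For part (b), apply (a) to both $G$ and $\Ad(G)$ (which has the same Lie algebra $\g$): both exponentials $\exp_G \: \g^{se}(0) \to G^{se}(0)$ and $\exp_{\Ad(G)} \: \g^{se}(0) \to \Ad(G)^{se}(0)$ are diffeomorphisms, and since $\Ad \circ \exp_G = \exp_{\Ad(G)}$, the restriction $\Ad|_{G^{se}(0)} = \exp_{\Ad(G)} \circ \exp_G^{-1}$ is a composition of diffeomorphisms. The main obstacle is the centralizer identification in the third paragraph: reconciling the Lie-algebraic statement (identity components agree via $\regexp$) with the group-level statement (component groups agree via the Weyl-theoretic fundamental-domain property), both of which crucially use that $\Delta_p^+$ spans $\ft^*$ and is $\cW_\fk$-stable, a phenomenon genuinely special to simple hermitian Lie algebras.
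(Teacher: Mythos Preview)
Your proof is correct, but the injectivity argument takes a considerably more involved route than the paper's. Both proofs establish surjectivity from Theorem~\ref{thm:conncomp} and regularity from Lemma~\ref{lem:expregx} via $\Delta_k \subeq \Delta_p^+ - \Delta_p^+$; the paper in fact proves this inclusion on the spot from the simplicity of $\g_\C$ (the subspace $\fp_\C^+ + [\fp_\C^+,\fp_\C^-] + \fp_\C^-$ is a nonzero ideal, hence all of $\g_\C$, so $\fk_\C = [\fp_\C^+,\fp_\C^-]$). Part~(b) is handled identically in both.

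The divergence is in injectivity. The paper exploits regularity a second time: if $\exp x = \exp y$ with $\exp$ regular at $x$, then \cite[Lemma~9.2.31]{HN12} gives $[x,y]=0$, so $\exp(x-y)=e$ and the abelian subalgebra $\R x + \R y$ is compactly embedded, hence $\Ad(G)$-conjugate into $\ft$. One may therefore assume $x,y \in \ft^{se}(0)$ with $x-y \in \Gamma_T$, and then the same span argument you give ($i\alpha(x-y) \in (-2\pi,2\pi) \cap 2\pi\Z = \{0\}$ for $\alpha \in \Delta_p^+$, together with $\Delta_k \subeq \Delta_p^+ - \Delta_p^+$) forces $x-y \in \fz(\g) = \{0\}$. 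This bypasses the polar-decomposition reduction, the Weyl-conjugacy step, and the centralizer analysis entirely.

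Your route via Proposition~\ref{prop:2.11} and the identification $Z_K(\exp t) = Z_K(t)$ is valid but carries more overhead. Two points deserve a word more than you give them. First, the assertion that $K$-conjugate elements of $T$ are $\cW_\fk$-conjugate is standard for compact $K$, but here $K$ need only be compactly embedded (for simply connected $G$ one has $K \cong \R \times K'$); the argument still goes through via conjugacy of maximal tori in $Z_K(t_2)_e$, but this should be said. Second, the component-group identification $\pi_0(Z_K(\exp t)) \cong \{w \in \cW_\fk : w\exp t = \exp t\}/\cW(Z_K(\exp t)_e)$ requires knowing that every component of $Z_K(\exp t)$ meets $N_K(T)$, which again follows from conjugacy of maximal tori in the identity component but is not entirely automatic. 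The paper's commutation trick sidesteps all of this.
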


\begin{prf} (a) As $\g$ is simple hermitian, we have
  \[ \g_\C = \fk_\C \oplus \fp_\C^+  \oplus \fp_\C^- \quad \mbox{ with } \quad
    \fp_\C^\pm = \sum_{\alpha \in \pm \Delta_p^+} \g_\C^\alpha.\]
  Then $\fp_\C^+ + [\fp_\C^+, \fp_\C^-] + \fp_\C^- \trile \g_\C$ 
  is a non-zero ideal because this subspace is invariant
  under $\fk_\C$ and $\fp_\C^\pm$. The simplicity of $\g_\C$
  now implies that this ideal cannot be proper, i.e., 
$\fk_\C = [\fp_\C^+, \fp_\C^-],$ 
  and hence that $\Delta_k \subeq \Delta_p^+ - \Delta_p^+$. 
  Now Lemma~\ref{lem:expregx}  implies that the exponential
  function has no singular point in $\g^{se}(0)$.
  So the map $\exp \: \g^{se}(0) \to G^{se}(0)$ is a surjective
  local diffeomorphism, and it remains to show that it is injective.

  Let $x,y \in \g^{se}(0)$ with $\exp x = \exp y$.
  Since $\exp$ is regular in $x$,
  \cite[Lemma~9.2.31]{HN12} implies that $[x,y] = 0$,
  so that $\exp(x-y) = e$. Therefore
  $\fb := \R x + \R y$ is a compactly embedded abelian subalgebra,
  hence conjugate by $\Ad(G)$ to a subspace of $\ft$
  (\cite[Thm.~VII.1.4]{Ne99}).
  We may therefore assume that $x,y \in \ft^{se}(0)$.
  Now $\exp(x-y) = e$ implies that
  $\alpha(x-y) \in 2 \pi i \Z$ for all roots~$\alpha$.
  If $\alpha \in \Delta_p^+$, then
  $i\alpha(x-y) \in (-2\pi, 2\pi) \cap 2\pi \Z = \{0\}$.
  As $\Delta_k \subeq \Delta_p^+ - \Delta_p^+$,
 all roots vanish on $x-y$, so that
  $x-y \in \fz(\g) = \{0\}$. This proves that
  $\exp$ is injective on $\g^{se}(0)$.

\nin (b) As $\Ad \circ \exp_G = \exp_{\Ad(G)}$ and (a) applies to
$G$ as well as the adjoint group, the assertion follows from the fact
that $\Ad \: G \to \Ad(G)$ is a covering map.
\end{prf}

\begin{thm}\mlabel{properness:theorem}
  {\rm(Properness Theorem)}
  Suppose that $\g$ is simple hermitian.
  If $C \subeq K^{se}(0)$ is a compact subset,
    then $\{ p C p^{-1} \: p  \in \exp(\fp)\}  \subeq
    G^{se}(0)$ is a closed subset of~$G$.
\end{thm}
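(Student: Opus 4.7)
The plan is to show that the continuous map $\Psi\: \fp \times C \to G$, $\Psi(X,c) := \exp(X) c \exp(-X)$, is proper; the image is then automatically closed. Since $C$ is compact, properness reduces to the claim that whenever $\Psi(X_n, c_n) \to g$ in $G$ with $c_n \in C$ and $X_n \in \fp$, the sequence $\|X_n\|$ stays bounded. Assuming this, passing to a subsequence gives $X_n \to X$ and $c_n \to c \in C$, so $g = \Psi(X,c)$ lies in the image. The whole argument therefore comes down to deriving a contradiction from $\|X_n\| \to \infty$.

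First I would decompose $\exp(X_n) = k_n \exp(H_n) k_n^{-1}$, using that every element of $\fp$ is $\Ad(K)$-conjugate to a unique element of a closed Weyl chamber $\fa^+$ of a fixed maximal abelian subspace $\fa \subeq \fp$. Here $H_n \in \fa^+$ and $k_n \in K$. Since $\g$ is simple hermitian, $\Ad(K)$ is compact in $\Aut(\g)$, so after passing to a subsequence we get $\Ad(k_n) \to \Ad(k)$ and $H_n/\|H_n\| \to H_\infty \in \fa^+$ with $\|H_\infty\|=1$. Lifting $k_n$ through the covering $\Ad\: K \to \Ad(K)$ lets us additionally arrange $k_n \to k$ in $K$ (the only ambiguity is multiplication by $Z(G)$, which is central and so leaves the conjugation $k_n^{-1}(\cdot)k_n$ unchanged). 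Setting $c_n' := k_n^{-1} c_n k_n \to c' := k^{-1} c k \in K^{se}(0)$ and $g_n' := k_n^{-1} g_n k_n \to k^{-1} g k =: g'$, the problem becomes $\exp(H_n) c_n' \exp(-H_n) = g_n' \to g'$.

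Next I would work in the $\fa$-restricted root decomposition $\g = \bigoplus_\mu \g_\mu$, under which $\Ad(\exp H_n)$ is block-diagonal with eigenvalue $e^{\mu(H_n)}$ on $\g_\mu$. The $(\mu,\nu)$-block of $\Ad(g_n') = \Ad(\exp H_n)\, \Ad(c_n')\, \Ad(\exp H_n)^{-1}$ equals $e^{(\mu-\nu)(H_n)} (\Ad(c_n'))_{\mu\nu}$. Its boundedness as $n \to \infty$ forces $(\Ad(c_n'))_{\mu\nu} \to 0$ whenever $(\mu-\nu)(H_\infty) > 0$, so $\Ad(c')$ preserves the filtration $V_{\leq r} := \bigoplus_{\mu(H_\infty)\leq r}\g_\mu$. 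The $K$-invariant inner product $(X,Y) := -B(X,\theta Y)$ (with $B$ the Killing form and $\theta$ the Cartan involution) makes the $\fa$-grading orthogonal, and $\Ad(c')$ is orthogonal with respect to it, so it also preserves each orthogonal complement $V_{\leq r}^\perp$; hence $\Ad(c')$ is block-diagonal in the $H_\infty$-grading. Consequently $\Ad(c')$ commutes with $\ad H_\infty$, and simplicity of $\g$ forces $\Ad(c') H_\infty = H_\infty$.

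This yields the contradiction: $c'$ commutes with $\exp(\R H_\infty)$, so $\exp(\R H_\infty) \subeq Z_G(c')$. Since $c' \in K^{se}(0)$, Theorem~\ref{thm:compemb} gives that $\Fix(\Ad(c'))$ is compactly embedded, and then Lemma~\ref{lem:compact-central} yields $Z_G(c') \subeq K$. Therefore $H_\infty \in \fk \cap \fp = \{0\}$, contradicting $\|H_\infty\|=1$. The main technical obstacle is phrasing compactness arguments correctly when $K$ itself need not be compact (as in simply connected hermitian groups); this is handled by working consistently at the level of $\Ad(K)$ and only at the end lifting back to~$K$.
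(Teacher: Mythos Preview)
Your argument is correct. Both your proof and the paper's hinge on the same geometric fact---that a stably elliptic element of $K$ cannot fix a nonzero vector in $\fp$---but the linear-algebraic packaging differs. The paper first reduces to the adjoint group via Proposition~\ref{prop:3.11}(b) (so that $K$ itself is compact), enlarges $C$ to be $\Ad(K)$-invariant, and then works in the weight decomposition of $\End(\g)$ under $\fa$: writing $c_n = \sum_\alpha c_n^\alpha$ and pairing with suitable dual elements $d^{-\alpha}$ via the trace form isolates the factor $e^{\alpha(y_n)}$ in $\tr(\exp(y_n)c_n\exp(-y_n)d^{-\alpha})$, and a separate application of the Cartan involution $\theta$ gives the two-sided bound. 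The conclusion is that $\alpha(y_n)$ is bounded for each $\alpha$ in $\supp(c)$, and $\supp(c)$ spans $\fa^*$ precisely because the centralizer of $c$ is compactly embedded. You instead argue by contradiction through a limiting direction $H_\infty = \lim H_n/\|H_n\|$: your filtration-plus-orthogonality step shows $\Ad(c')$ commutes with $\ad H_\infty$, whence (by $\fz(\g)=\{0\}$) $H_\infty \in \Fix(\Ad(c')) \subeq \fk$, contradicting $0\neq H_\infty \in \fp$. Your handling of non-compact $K$ at the level of $\Ad(K)$ is a clean alternative to the paper's wholesale passage to the adjoint group, and your orthogonality argument replaces the trace-pairing and the $\theta$-trick in one stroke; conversely, the paper's direct approach avoids the normalization step and yields an explicit bound rather than a contradiction. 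Neither route buys more generality here.
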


\begin{prf} Suppose that $g_n = p_n c_n p_n^{-1}$ with
$c_n \in C\subeq K^{se}(0)$ and $p_n = \exp y_n$ with $y_n \in \fp$
is a sequence
  converging to some $g \in G$. Passing to a subsequence, we may
  assume that $c_n \to c \in C$. We claim that the sequence
  $y_n$ has a convergent subsequence, converging to some
  $y \in \fp$, so that $g = \exp(y) c \exp(-y) \in G^{se}(0)$.

In view of (b), we may w.l.o.g.\ assume that $Z(G) = \{e\}$, i.e.,
  that $G \cong \Ad(G)$.   
  We write $p_n = u_n a_n u_n^{-1}$ with $a_n\in  \exp(\fa)$, where 
  $\fa \subeq \fp$ is maximal abelian and $u_n \in K$.
  Passing from $C$ to the larger compact subset
  $\bigcup_{k \in K} k C k^{-1} \subeq K^{se}(0)$, we may
  further assume that $C$ is conjugation invariant under~$K$.
  Then
  \[ p_n c_n p_n^{-1}
    = (u_n a_n u_n^{-1}) c_n (u_n a_n^{-1} u_n^{-1})
    = u_n (a_n c_n'  a_n^{-1}) u_n^{-1} 
  \]
  for $c_n' := u_n^{-1} c_n u_n \in C$.
  Since the sequence $(u_n)$ in the compact group
  $K \cong \Ad(K)$ has a convergent subsequence,
  this reduces our problem to the special case where~$y_n \in \fa$. 

Consider the natural representation of $\fa$ on
  $\End(\g)$ by $y.A := [\ad y,A]$. We thus obtain a weight space
  decomposition
  \[ \End(\g) =  \bigoplus_{\alpha \in \fa^*} \End(\g)_\alpha.\]
  Accordingly, we write
  \[ c_n = \sum_\alpha c_n^\alpha \in \Ad(K) \subeq \End(\g).\]
  We likewise write $c= \sum_\alpha c^\alpha$
  and
  \[ \supp(c) := \{ \alpha \in \fa^* \: c^\alpha \not=0\}.\] 
  As $c \in K^{se}(0)$, its centralizer is compactly embedded
  (Lemma~\ref{lem:cent-in-k} and Theorem~\ref{thm:compemb}),
  so that $\supp(c)$ spans $\fa^*$. 
  If $\alpha \in \supp(c)$, then there exists an endomorphism
  $d^{-\alpha} \in \End(\g)$ with
  $\tr(c^\alpha d^{-\alpha}) \not=0$ because the trace form
  $(A,B) \mapsto \tr(AB)$ is invariant under the $\fa$-action.
  As $c_n^\alpha \to c^\alpha$, we then obtain
$\tr(c_n^\alpha d^{-\alpha}) \not=0$ 
    for $n$ sufficiently large. Then
    \[\tr(\exp(y_n) c_n \exp(-y_n) d^{-\alpha}) 
= \tr(c_n \exp(-y_n) d^{-\alpha}\exp(y_n)) 
= e^{\alpha(y_n)} \tr(c_n^\alpha d^{-\alpha}),\]
and since $\tr(c_n^\alpha d^{-\alpha}) \to \tr(c^\alpha d^{-\alpha}) \not=0$,
the sequence $e^{\alpha(y_n)}  \in \R$ converges,
hence $\alpha(y_n)$ is bounded from below for each $\alpha\in \supp(c)$. 

Let $\theta$ be a Cartan involution of $\g$, so that
$\g = \fk \oplus \fp$ is the corresponding eigenspace decomposition. 
Then 
\[ \theta(\exp(y_n) c_n \exp(-y_n))
  = \exp(-y_n) c_n \exp(y_n) \to \theta(g),\]
so that the same argument applies with $y_n$ replaces by $-y_n$.
Therefore the sequence $(\alpha(y_n))_{n \in \N}$ is bounded
for each $\alpha \in \supp(c)$, and since $\supp(c)$ spans
$\fa^*$, the sequence $(y_n)_{n\in \N}$ in $\fa$ is bounded.
We may therefore assume that $y_n \to y$ in $\fa$.
This completes the proof.
\end{prf}

\begin{rem} Suppose that $\g$ is simple hermitian
  and that $G$ is simply connected.
  Then
  \[ K \cong Z(K)_e \times K' \cong \R \times K',\]
  where $K'$ is compact semisimple.
  Choosing $z_0 \in \fz(\fk)$ in such a way that $\Delta^+_p(z_0) = \{i\}$,
  we obtain the above diffeomorphism by
  \[ \R \times K' \to K, \quad (t,k) \mapsto \exp(tz_0)k.\]

  An element $k = \exp(tz_0)k' \in K$ is contained in $K^{se}$ if and only
  if its action on $\fp$ has no non-zero fixed point. This is equivalent
  to
  \[ e^{-it} \not\in \Spec(\Ad_{\fp_\C^+}(k')).\]
  For $x = t z_0 + x' \in \ft$ with $x' \in \ft \cap \fk'$, we have
  \[ \alpha(x) = i t + \alpha(x') \quad \mbox{ for } \quad \alpha \in \Delta_p^+.\]

  The convex subset $\ft^{se}(0)$ is $\cW_\fk$-invariant, hence also invariant
  under the fixed point projection $p_\fz \: \ft \to \fz(\fk)$ with kernel
  $\ft \cap \fk'$. We conclude that
  \[ \ft^{se}(0) = \{ t z_0 + x' \:
    0 < t < 2\pi, (\forall \alpha \in \Delta_p^+)\
    i \alpha(x') \in (-t, 2\pi - t)\}.\]

  We also observe that the point symmetry
\[ s(x) := 2\pi z_0 - x \]
    with respect to $\pi z_0$   leaves $\ft^{se}(0)$ invariant. 
\end{rem}

\section{Elliptic domains and compact order intervals
  in semigroups} 
\mlabel{sec:4}

In this section we discuss aspects relating
  elliptic domains with invariant cones in a
  simple hermitian Lie algebra $\g$.
  Recall that a
  simple real Lie algebra $\g$ possesses a pointed generating closed
  convex cone $W \subeq \g$ invariant under $\Inn(\g)$ if and only
  if $\g$ is hermitian (\cite{HHL89, Ne99}). In this case there exists a
  maximal such cone $W^{\rm max}_\g$, unique up to sign, with the
  property that any other pointed generating invariant cone $W$
  is either contained in $ W^{\rm max}_\g$ or $- W^{\rm max}_\g$. Here
  we are interested only in the maximal cone $W := W_\g^{\rm max}$
  and the corresponding
  biinvariant causal structure on $G$, defined by the cone field
  $(g.W)_{g \in G}$. If $G$ is simply connected,
  then this causal structure corresponds to a global order on
  $G$ if and only if $G$ is of {\it tube type},
    i.e., if the corresponding Riemannian symmetric 
space $G/K$ is biholomorphic to a tube domain $T_\Omega = \Omega + i V$, 
where $\Omega$ is an open symmetric cone in the real vector
space~$V$ (actually a euclidean Jordan algebra) 
(\cite{Ol82}, \cite{Ne90}, \cite[Thm.~VIII.12]{Ne93}).
Then $S_W := \oline{\la \exp W \ra} \subeq G$ is a proper closed
subsemigroup. 
The main result of this section is Theorem~\ref{thm:4.x} which
characterizes $G^{se}(0)$ as the interior of the set
$\comp(S_W)$ of all those elements $s \in S_W$ for which the order interval
$S_W \cap s S_W^{-1}$ is compact. As a consequence, we obtain
that the causal structure on $G^{se}(0)$ is globally
hyperbolic (Corollary~\ref{cor:globhyp1}).

\subsection{The maximal invariant cone and $\g^{se}(0)$}

From \eqref{eq:fixadg} 
it follows in particular that
$\pi z \in \ft^{se}(0)$ lies in the basic alcove.
Moreover, 
\begin{equation}
  \label{eq:om2}
 \ft^{se}(0) - \pi z
 = \{ x \in \ft \: (\forall \alpha \in \Delta_p)\  |i\alpha(x)| < \pi \}.
\end{equation}
We also observe that 
\begin{equation}
  \label{eq:om3}
  \ft^{se}(0)  = (W^{\rm max}_\ft)^\circ \cap (2\pi z - (W^{\rm max}_\ft)^\circ).
\end{equation}
As $\ft^{se}(0)$ is $\cW_\fk$-invariant, we have
\[ \g^{se}(0) \cap \ft = \ft^{se}(0).\]

Let $W := W^{\rm max}_\g \subeq \g$ be the unique pointed generating
invariant convex cone with $W \cap \ft = W^{\rm max}_\ft$.
Then 
\begin{equation}
  \label{eq:Wse1}
  W^{se} := W \cap \g^{se} \subeq \comp(\g)
\end{equation}
(cf.\ Lemma~\ref{lem:c.1a}(iv))
is an open $\Ad(G)$-invariant subset of $W$ that is
determined by $W^{se}_\ft := W^{se} \cap \ft$ via
\begin{equation}
  \label{eq:Wse}
  W^{se}= \Ad(G).W^{se}_\ft = \dot\bigcup_{\vartheta \geq 0} \g^{se}(\vartheta).
\end{equation}
Here 
\[ \g^{se}(0) = \Ad(G)\ft^{se}(0) \subeq W^{se} \]
is the connected component of $W^{se}$ containing $(0,2\pi)z$, 
and
\[ G^{se}(0) = \exp(\g^{se}(0)) \]
is the connected component of $G^{se}$ containing $\exp(z)$
(Theorem~\ref{thm:conncomp}).
We also note that
\begin{equation}
  \label{eq:comp-shift}
 \exp(2\pi m z) G^{se}(\vartheta)
 = G^{se}(\vartheta + m)
\end{equation}
follows from $\exp(2\pi z) \in Z(G)$ and \eqref{eq:alc-trans}.

\begin{rem} For the cone $W_\fk := W_\g^{\rm max} \cap \fk$, we have 
  \[ \fk^{se}(0) := \fk \cap \g^{se}(0) = \Ad(K) \ft^{se}(0)
    \  {\buildrel !\over  =}
   \ W_\fk^\circ \cap \big(2\pi z - W_\fk^\circ\big),\] 
which  follows from the $\Ad(K)$-invariance of both sides by reduction to
 \eqref{eq:om3}. This is an open order interval
 for the order on $\fk$ defined by the pointed generating closed
convex  cone $W_\fk$. In particular,  $\fk^{se}(0)$ is an open subset whose
 boundary is contained in
 \[ \partial W_\fk \cup \big(2\pi z - \partial W_\fk\big).\]
 \end{rem}

 \begin{rem} Let $\cD \subeq \fp_\C^+ \into G_\C/K_\C P_\C^-$
   denote the realization of the Riemannian symmetric space
   $G/K$ as a bounded symmetric domain.

   The subset $G^e = \bigcup_{g \in G} g K g^{-1}$ consists of all elements
   $g \in G$ for which $\Fix(g) \subeq \cD$ is non-empty.
   In this characterization $g \in G^{se}$ is equivalent to
   $|\Fix(g)| = 1$, because this is equivalent to $Z_G(g)$ being
   compactly embedded (Theorem~\ref{thm:compemb}), resp., to the action of $g$ on the tangent
   space $T_m(\cD)$ of any fixed point $m$ to have no fixed vectors.

   This leads to a natural map
   \[ \Psi \: G^{se} \to \cD, \quad  \Fix(g) = \{ \Psi(g)\}, \]
   which is equivariant with respect to the conjugation action on $G$:
   \[ \Psi(g_1 g_2 g_1^{-1}) = g_1. \Psi(g_2).\]
   In particular, we have for $k \in K^{se}$ and $p = \exp x \in \exp(\fp)$:
   \[ \Psi(pk p^{-1}) = p.\Psi(k) = \exp(x) K = \exp(x).0 \in \cD.\]
   We may also think of $G^{se}(0)$ as a $G$-equivariant fiber bundle
   \[ G^{se}(0) \cong G \times_K K^{se}(0)
   = (G \times K^{se}(0))/K, \quad
   (g,k_0).k = (gk, k^{-1} k_0 k).\]
 \end{rem}

 \begin{rem} (Connecting with tube domains of causal symmetric spaces)
   In this remark we briefly explain how the domains
   $G^{se}(0)$ occur naturally in the context of causal symmetric spaces
   (cf.\ \cite{NO23}).
      
We consider the noncompactly causal symmetric space
\[ M := G_\C/G \quad \mbox{ with } \quad -i W_\g^{\rm max} \subeq i \g
  \cong T_{e G}(M)\]
defining the causal structure.
We consider $G_\C$  as a complex symmetric space, containing
\[ G = \{ g \in G_\C \:\oline g = g\}_e
\quad \mbox{ and } \quad  M \cong \{ g \in G_\C \:\oline g = g^{-1}\}_e \]
as symmetric subspaces. 
Here we assume that $G = G_\C^\sigma$ for $\sigma(g) = \oline g$,
which holds in particular if $G_\C$ is simply connected.

For $h := -iz \in i\g$, the operator
$\ad h$ is diagonalizable with eigenvalues $\{-1,0,1\}$,  and the domain 
\[ W^{\rm res}_{\ft}
  := \Big\{ x \in W_\ft^{\rm max} \:\Spec\Big(\ad \Big(x - \frac{\pi}{2}z\Big)\Big)
  \subeq \Big(-\frac{\pi}{2},\frac{\pi}{2}\Big) \cdot i\Big \} 
  =  \frac{1}{2}\ft^{se}(0)\]
in $\ft$ defines the $\Ad(G)$-invariant domain 
\[ W^{\rm res} := \Ad(G)W^{\rm res}_{\ft} = \frac{1}{2} \g^{se}(0),\]
for which the exponential function
$\Exp_e(x) = \exp(2x)$ satisfies 
\begin{equation}
  \label{eq:expew}
\Exp_{e}(W^{\rm res})
= \exp(\g^{se}(0)) = G^{se}(0)
\end{equation}
is the connected component $G^{se}(0)$ in the open subset
$G^{se}$ of the symmetric subspace $G \subeq G_\C$. 

The complex tube domain associated to $M$ is 
\[ \cT_M \cong G_\C.G^{se}(0) = \{ g w \oline g^{-1} \:
w \in G^{se}(0)\} \subeq G_\C, \] 
(see \cite{NO23} for details).
So $\cT_M$ is a $G_\C$-equivariant fiber bundle
$\cT_M\to G_\C/G$ whose fiber in the base point is $G^{se}(0)$.
It contains $M \cong G_\C/G$ as a totally real submanifold in its boundary. 
\end{rem}

\begin{lem} \mlabel{lem:3.6} If
  $W = W^{\rm max}_\g$, then $W^{se} = W \cap \g^{se} \subeq W^\circ$.
 \end{lem}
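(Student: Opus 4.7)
The approach is to observe that $W^{se}$ is an open subset of $\g$ contained in $W$, from which the inclusion $W^{se} \subeq W^\circ$ is immediate (since $W^\circ$ is by definition the largest open subset of $\g$ contained in $W$).

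To establish openness of $W^{se}$ in $\g$, I would invoke the decomposition
\[ W^{se} = \dot\bigcup_{\vartheta \geq 0} \g^{se}(\vartheta) \]
recorded in equation~\eqref{eq:Wse}. By Proposition~\ref{prop:3.4}(a), each piece $\g^{se}(\vartheta)$ is a connected component of $\g^{se}$; since $\g^{se}$ is itself open in $\g$ and $\g$ is locally connected, each $\g^{se}(\vartheta)$ is open in $\g$. Hence $W^{se}$ is a union of open subsets of $\g$, and therefore open. Combined with $W^{se} \subeq W$, this gives $W^{se} \subeq W^\circ$.

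There is essentially no obstacle here: the substantive point is packaged in \eqref{eq:Wse}, which records that each connected component of $\g^{se}$ lies either entirely inside $W$ or entirely outside it (a consequence of $\Ad(G)$-invariance and convexity of $W$ together with connectedness of the components $\g^{se}(\vartheta)$). If one preferred a direct argument bypassing \eqref{eq:Wse}, one could conjugate an arbitrary $x \in W^{se}$ into $\ft$ (using $x \in \g^{se} \subeq \comp(\g) = \Ad(G)\ft$) to obtain $x \in \ft^{se} \cap W^{\rm max}_\ft$, then observe that \eqref{eq:tse} forces $\alpha(x) \neq 0$ for every $\alpha \in \Delta_p$ while $x \in W^{\rm max}_\ft$ forces $i\alpha(x) \geq 0$ on $\Delta_p^+$, yielding strict positivity and hence membership in $(W^{\rm max}_\ft)^\circ$; one then extends by $\Ad(G)$-invariance of $W^\circ$.
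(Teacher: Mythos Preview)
Both of your arguments are correct. The alternative direct argument you sketch at the end---conjugate into $\ft$, invoke \eqref{eq:tse} to get $\alpha(x)\neq 0$ for all $\alpha\in\Delta_p$, then combine with $i\alpha(x)\geq 0$ from $x\in W^{\rm max}_\ft$---is exactly the paper's proof (the paper obtains the nonvanishing by observing that $\alpha(x)=0$ for some $\alpha\in\Delta_p$ would make $\ker(\ad x)$ non-compactly embedded, contradicting $x\in\g^{se}\subeq\comp(\g)^\circ$ via Lemma~\ref{lem:c.1a}(v); your use of \eqref{eq:tse} is an equivalent phrasing). Your primary openness argument is a clean repackaging: once \eqref{eq:Wse} is granted, $W^{se}$ is a union of the open sets $\g^{se}(\vartheta)$ (Proposition~\ref{prop:3.4}(a)), hence open in $\g$ and therefore contained in $W^\circ$. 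As you correctly observe, the substance then lies in \eqref{eq:Wse}, and verifying that each $\g^{se}(\vartheta)$ lies entirely in $W$ or entirely outside it is again the same root-by-root check. So the two routes differ only in where the work is located.
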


 \begin{prf} Let $x \in W^{se}$. Then its adjoint orbit intersects
   $W^{se}_\ft = \ft^{se} \cap W^{\rm max}_\ft$, so that we may assume that
   $x \in W^{\rm max}_\ft$, i.e., $i\alpha(x) \geq 0$ for all $\alpha \in\Delta_p^+$. If one non-compact root vanishes on $x$,
   then $\ker(\ad x)$ is not compactly embedded, contradicting
   $x \in \g^{se}$. Therefore $x \in (W^{\rm max}_\ft)^\circ \subeq W^\circ$.
 \end{prf}

 \begin{defn} Suppose that the invariant cone $W\subeq \g$ is
   {\it global in $G$}, i.e., that the closed subsemigroup
   \[ S_W := \oline{\la \exp W \ra} \]
   satisfies
   \[ \L(S_W) := \{ x \in \g \: \exp(\R_+ x) \subeq S_W \} = W.\]

 \nin (a) We write $g \preceq_{S_W} h$ if $g^{-1} h \in S_W$
 and $g \prec_{S_W} h$ if $g^{-1} h \in S_W^\circ$.
 For $g \in G$, we put

 \nin (b)  The {\it order intervals} in $G$ are the sets
   \begin{equation}
     \label{eq:oint}[g,h]_{\preceq} := g S_W \cap h S_W^{-1}
     \quad \mbox{ and } \quad
     [g,h]_{\prec} := g S_W^\circ \cap h (S_W^\circ)^{-1}
   \end{equation}
We write 
\[ \comp(S_W) := \{ s \in S_W \: [e,s]_{\preceq} \ \mbox{ compact} \}\]
for the set of all elements $s \in S_W$ for which
 the order interval $[e,s]_{\preceq}$ is compact. 
\end{defn}

The following lemma collects some fundamental information about
$\comp(S_W)$ and its relation with the cone~$W$. 

\begin{lem} \mlabel{lem:connected} 
  The following assertions hold:
  \begin{itemize}
  \item[\rm(a)] The interior $\comp(S_W)^\circ$ of $\comp(S_W)$ in $G$ is connected.    
\item[\rm(b)] $S_W \setminus \comp(S_W)$ is a closed subset of $G$.   
  \item[\rm(c)] $\comp(S_W) \cap S_W^\circ = \comp(S_W)^\circ$.   
  \item[\rm(d)]    $\comp(S_W) \subeq \exp(W)$. 
  \item[\rm(e)] For each $s \in S \setminus \comp(S_W)$
    there exists $0\not= w \in W$ with $\exp(\R_+ w) \subeq s S_W^{-1}$. 
\item[\rm(f)] $\exp(W^\circ) \subeq S_W^\circ$.     
\item[\rm(g)] $\comp(S_W)^\circ \subeq G^{se}(0)$.
\item[\rm(h)] $\exp^{-1}(\comp(S_W)) \cap W^\circ \subeq \g^{se}(0)$.
  \end{itemize}
 \end{lem}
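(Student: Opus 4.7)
The plan is to organize the eight parts by dependency rather than by the listed order. First I would dispatch (f): for $w \in W^\circ$ and $n$ large enough that $\exp$ is regular at $w/n$, the map $v \mapsto \exp(v)\exp(w/n)^{n-1}$ is a local diffeomorphism from a neighborhood of $w/n$ (still inside $W^\circ$) into $S_W$, whose image is an open $G$-neighborhood of $\exp(w/n)^n = \exp(w)$, yielding $\exp(w) \in S_W^\circ$.

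Next I would prove (e), (b), (c) as a block. For (e), noncompactness of the closed set $[e,s]_\preceq = S_W \cap sS_W^{-1}$ produces an unbounded sequence $(g_n)$; the polar decomposition of Proposition~\ref{prop:2.11} expresses $g_n = \exp(y_n)k_n$ with $y_n$ in a vector subspace and $k_n$ in a compact set, so normalizing the divergent $y_n$ extracts an asymptotic direction, and closedness of $S_W$ and $sS_W^{-1}$ together with $\Ad(G)$-invariance of $W$ promote it to a nonzero $w \in \L(S_W) = W$ with $\exp(\R_+w) \subeq sS_W^{-1}$. Then (b) follows by applying (e) along $s_n \to s$ in $S_W \setminus \comp(S_W)$ and passing to a subsequential limit of the resulting rays. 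For (c), the containment $\comp(S_W)^\circ \subeq S_W^\circ$ is automatic from $\comp(S_W) \subeq S_W$; the converse uses (b) (i.e., closedness of $S_W \setminus \comp(S_W)$) to upgrade a small $S_W^\circ$-neighborhood of $s \in \comp(S_W) \cap S_W^\circ$ into a neighborhood lying inside $\comp(S_W)$.

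The main obstacle is (d): $\comp(S_W) \subeq \exp(W)$. This is the exponentiality theorem for Lie semigroups with compact order intervals. The approach is to use compactness of $[e,s]_\preceq$ together with Zorn's lemma applied to the induced closed partial order to produce a maximal $\preceq$-chain $C$ from $e$ to $s$; a control-theoretic / infinitesimal argument using the Lie-wedge structure $\L(S_W) = W$ then identifies $C$ with the image of a one-parameter subsemigroup $t \mapsto \exp(tw)$ with $w \in W$ and $\exp(w) = s$. This is the deepest step and uses the full Lie structure of $S_W$, not merely its topological properties.

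The remaining parts (g), (h), (a) follow. For (h), reduce via $\Ad(G)$-conjugation to $x \in \ft \cap W^\circ = (W_\ft^{\rm max})^\circ$, so that $0 < i\alpha(x)$ for all $\alpha \in \Delta_p^+$; the upper bound $i\alpha(x) < 2\pi$ is forced by combining (d) with a central-translation argument, using \eqref{eq:comp-shift} and $\exp(2\pi z) \in Z(G) \cap S_W$ to show that any violation would embed an infinite discrete family of distinct central translates inside $[e,\exp x]_\preceq$, contradicting compactness. This gives $x \in \ft^{se}(0) \subeq \g^{se}(0)$. Part (g) is then the composition of (c), (d), Lemma~\ref{lem:3.6}, and (h): $\comp(S_W)^\circ \subeq S_W^\circ \cap \exp(W) \subeq \exp(W^\circ)$, after which (h) applies. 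Finally (a) follows from (g) by a star-shapedness argument centered on $\exp(\pi z)$: for $s \in \comp(S_W)^\circ$ Proposition~\ref{prop:2.11} writes $s = mkm^{-1}$ with $k = \exp(x) \in K^{se}(0)$, $x \in \ft^{se}(0)$, and $m$ in an affine subset $M$; linearly interpolating $m \to e$ inside $M$ and $x \to \pi z$ inside the convex set $\ft^{se}(0)$ (using the Properness Theorem~\ref{properness:theorem} to verify the path remains inside $\comp(S_W)^\circ$) gives a path in $\comp(S_W)^\circ$ joining $s$ to the base point $\exp(\pi z)$.
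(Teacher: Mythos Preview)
The core gap is in your treatment of (g), (h), (a), where you invert the paper's logical order and the key idea for (g) is missing. Your (h) via ``infinite central translates'' does not work: since $\fz(\fk) = \R z$, the only central element available is $\exp(2\pi z)$, and $\exp(2\pi n z) \preceq_{S_W} \exp(x)$ requires $x - 2\pi n z \in W$, i.e.\ $i\alpha(x) \geq 2\pi n$ for \emph{every} $\alpha \in \Delta_p^+$; a single offending root $\alpha_0$ with $i\alpha_0(x) \geq 2\pi$ yields only finitely many such $n$, so compactness of $[e,\exp x]_\preceq$ is not contradicted. Your (g) via the chain $\comp(S_W)^\circ \subeq S_W^\circ \cap \exp(W) \subeq \exp(W^\circ)$ breaks at the second inclusion: Lemma~\ref{lem:3.6} asserts $W \cap \g^{se} \subeq W^\circ$, so to invoke it you would already need the preimage to lie in $\g^{se}$, which is precisely what (g) is meant to establish; there is no a priori reason a boundary element of $W$ cannot exponentiate into $S_W^\circ$. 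Your (a) constructs a path inside $G^{se}(0)$, but to place that path in $\comp(S_W)^\circ$ you need $G^{se}(0) \subeq \comp(S_W)^\circ$, which is the content of Theorem~\ref{thm:4.x}, proved \emph{after} this lemma and using it; the Properness Theorem~\ref{properness:theorem} is about closedness of conjugacy saturations and does not supply this inclusion. (A smaller slip: in (e) you invoke Proposition~\ref{prop:2.11}, which is a polar decomposition of $G^{se}$, not of $G$; elements of $[e,s]_\preceq$ need not be stably elliptic.)

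The idea you are missing is the paper's direct argument for (g): for $s \in \comp(S_W)^\circ$ the compact set $C := [e,s]_\preceq$ is invariant under conjugation by $Z_G(s)$, hence $CC^{-1}$ is a compact $Z_G(s)$-invariant identity neighborhood; since $G = \bigcup_{n} (CC^{-1})^n$, every $Z_G(s)$-conjugation orbit in $G$ is relatively compact. Theorem~\ref{thm:compchar} then forces each $x \in \g^{\Ad(s)} = \L(Z_G(s))$ to lie in $\comp(\g)$, so $\g^{\Ad(s)}$ is compactly embedded, and Lemma~\ref{lem:algrp} applied to $\phi = \Ad(s)$ gives $s \in G^{se}$. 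The paper proves (a) \emph{before} (g), by the elementary observation that any $a,b \in \comp(S_W)^\circ \subeq S_W^\circ$ admit a common $\prec$-lower bound $c \in S_W^\circ$, and causal paths from $c$ to $a$ and $b$ lie in $S_W^\circ \cap a S_W^{-1}$ and $S_W^\circ \cap b S_W^{-1}$, both contained in $\comp(S_W)^\circ$ via (c); this connectedness is then used in (g) to pin down the component $G^{se}(0)$. Finally (h) is derived \emph{from} (g): for $x \in W^\circ$ with $\exp x \in \comp(S_W)$, (f) and (c) give $\exp(tx) \in \comp(S_W)^\circ$ for all $t \in (0,1]$, so (g) yields $tx \in \g^{se}$ for all such $t$, and continuity in $t$ forces $i\alpha(x) \in (0,2\pi)$ for every $\alpha \in \Delta_p^+$.
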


 \begin{prf} (a) Let $a,b \in \comp(S_W)^\circ$. Then
$e \in [a,b]_{\prec}$ 
   implies the existence of a $c \in S_W^\circ$, contained in the intersection
   on the right. Then there are causal paths from $c$ to $a$ and $b$,
   which are contained in $S_W^\circ \cap a S_W^{-1}\subeq \comp(S_W)$
   and $S_W^\circ \cap bS_W^{-1} \subeq \comp(S_W)$, respectively.
   The concatenation of these paths connects $a$ and $b$ in
   $\comp(S_W)^\circ$.

\nin (b) follows from \cite[\S 1]{Ne91}.

\nin (c)     As $S_W \setminus \comp(S_W)$ is closed in $G$ by (b), 
  the intersection
  $\comp(S_W) \cap S_W^\circ$ is open in $G$, hence contained in
  $\comp(S_W)^\circ$. The converse inclusion is clear.

  \nin (d) follows from \cite[Thm.~IV.6]{MN95}.

   \nin (e)    Since the invariant cone $W = \L(S_W)$ is pointed,
   this follows from \cite[Cor.~VI.3]{Ne93}.

   \nin (f) Since  $\exp$ is a local diffeomorphism 
   around $0$, for $x \in W^\circ$, we have
   $\exp(tx) \in (\exp W)^\circ \subeq S_W^\circ$ for $t$ sufficiently small. As
   $S_W^\circ$ is a semigroup ideal in $S_W$, (f) follows.

   \nin (g)  Let $s \in \comp(S_W)^\circ$.
  Then the order interval $C := [e,s]_{\preceq}$ is compact
  and invariant under $Z_G(s)$. Therefore $CC^{-1}$ is a compact $e$-neighborhood
  invariant under the centralizer $Z_G(s)$.
  For every $g \in C$, the conjugation orbit under the group
  $Z_G(s)$ is relatively
  compact.   Now $G = \bigcup_{n \in \N} (CC^{-1})^n$ implies that,
  for each $g \in G$, the conjugation orbit under $Z_G(s)$ is relatively
  compact. So Theorem~\ref{thm:compchar}(c) shows that every
  $x \in  \L(Z_G(s)) = \g^{\Ad(s)}$ is compactly embedded, hence that
  $s \in G^{se}$ follows by applying Lemma~\ref{lem:algrp} to
  $\phi = \Ad(s)$. 

  Next we observe that $\comp(S_W)^\circ = U \cap S_W^\circ$
  for a sufficiently small $e$-neighborhood $U \subeq G$
  (\cite[Cor.~III.6]{MN95}), and thus
  $G^{se}(0) \cap \comp(S_W)^\circ \not=\eset$.
  Now the connectedness of $\comp(S_W)^\circ$ (Lemma~\ref{lem:connected})
  leads to~(g). 

  \nin (h) Let $x \in W^\circ$ with $\exp(x) \in \comp(S_W)$.
  Then (f) implies that $\exp(x) \in S_W^\circ$, hence by (c) further
  $\exp(x) \in \comp(S_W)^\circ$ and thus $\exp(x) \in G^{se}(0)$ by~(g).
  By Definition~\ref{def:1.1}, this means that $x \in \g^{se}$.
  Applying a suitable $\Ad(g)$, we may assume that $x \in \ft$.
    Since the same conclusions apply to the 
  multiples $tx$, $0 < t \leq 1$, we obtain
$0 < i\alpha(x) < 2\pi$ for every $\alpha \in \Delta_p^+,$ 
i.e., $x \in \ft^{se}(0) \subeq \g^{se}(0)$. We thus obtain~(h).
\end{prf}

\begin{lem} \mlabel{lem:algrp}
  Let $\g$ be a semisimple real Lie algebra.
  If $\phi \in \Aut(\g)$ is such that $\Fix(\phi) = \g^\phi$
  is compactly embedded, then $\phi$ is elliptic in $\Aut(\g)$.
\end{lem}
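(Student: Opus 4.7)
The plan is to apply the real Jordan decomposition inside $\Aut(\g)$, a real algebraic subgroup of $\GL(\g)$ because $\g$ is semisimple, and to use the compact-embeddedness hypothesis to force the hyperbolic and unipotent parts of $\phi$ to be trivial. Since $\Aut(\g)$ is algebraic, the Jordan components stay inside $\Aut(\g)$, so I may write $\phi = \phi_e \phi_h \phi_u$ as a pairwise commuting product with $\phi_e$ elliptic (semisimple with eigenvalues of modulus one), $\phi_h$ hyperbolic (semisimple with positive real eigenvalues), and $\phi_u$ unipotent. Because every derivation of $\g$ is inner, the canonical logarithms of $\phi_h$ and $\phi_u$ take the form $\ad h$ and $\ad n$ with $h, n \in \g$, where $\ad h$ has real spectrum (i.e., $h$ is hyperbolic) and $\ad n$ is nilpotent.

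Next I would show that both $h$ and $n$ lie in $\Fix(\phi)$. The pairwise commutation of the three Jordan factors together with the injectivity of $\exp$ on hyperbolic and on nilpotent derivations and the fact that $\z(\g) = \{0\}$ forces $\phi_e(h) = \phi_u(h) = h$ and $\phi_e(n) = \phi_h(n) = n$: for instance, $\phi_e \phi_h = \phi_h \phi_e$ gives $\exp(\ad(\phi_e h)) = \exp(\ad h)$, and injectivity of $\exp$ on hyperbolic derivations together with $\z(\g) = \{0\}$ yields $\phi_e(h) = h$. Combined with $h \in \ker(\ad h) = \Fix(\phi_h)$ and $n \in \ker(\ad n) = \Fix(\phi_u)$, this places $h, n \in \Fix(\phi_e) \cap \Fix(\phi_h) \cap \Fix(\phi_u)$. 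A short generalized-eigenspace argument identifies this triple intersection with $\Fix(\phi)$: the generalized $1$-eigenspace of $\phi$ equals $\Fix(\phi_s)$ where $\phi_s = \phi_e \phi_h$, and two commuting semisimple operators with eigenvalues respectively on the unit circle and on $\R_{>0}$ satisfy $\Fix(\phi_s) = \Fix(\phi_e) \cap \Fix(\phi_h)$. Thus $h, n \in \Fix(\phi)$.

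The compact-embeddedness hypothesis gives $\Fix(\phi) \subseteq \comp(\g)$, so both $\ad h$ and $\ad n$ must be semisimple with purely imaginary spectrum. But $\ad h$ was built to have real spectrum and $\ad n$ to be nilpotent, so both spectra reduce to $\{0\}$, forcing $\ad h = \ad n = 0$ and hence $h = n = 0$ by $\z(\g) = \{0\}$. Therefore $\phi_h = \phi_u = \id$ and $\phi = \phi_e$ is elliptic, which is exactly the claim. I expect the main obstacle to be the careful bookkeeping ensuring the Jordan components lie in $\Aut(\g)$ and the identification of $\Fix(\phi)$ with the triple intersection of the component fixed algebras; once those are secured, the two spectral dichotomies (real versus imaginary and nilpotent versus semisimple) finish the argument in one stroke.
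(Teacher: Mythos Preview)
Your proof is correct and follows essentially the same strategy as the paper's: use the multiplicative Jordan decomposition in the algebraic group $\Aut(\g)$, pull the hyperbolic and unipotent parts back to elements $h,n\in\g$ via $\der(\g)=\ad\g$, place $h,n$ in $\Fix(\phi)$, and then let the spectral dichotomies (real vs.\ imaginary, nilpotent vs.\ semisimple) force $h=n=0$. The paper proceeds in two stages---first $\phi=\phi_s\phi_u$, eliminating $\phi_u$, then building the hyperbolic derivation by hand from the modulus grading $\g^\theta$ of the $\phi_s$-eigenspaces---whereas you package this as the three-factor decomposition $\phi=\phi_e\phi_h\phi_u$ from the start; the content is the same, since the paper's grading construction is precisely the verification that $\log\phi_h\in\der(\g)$.

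Two minor remarks. First, you do not need the full identification $\Fix(\phi)=\Fix(\phi_e)\cap\Fix(\phi_h)\cap\Fix(\phi_u)$; only the trivial inclusion $\supseteq$ is used, so the generalized-eigenspace bookkeeping you flag as a potential obstacle can be dropped. Second, it is quicker to argue directly from the fact that $\phi_h$ and $\phi_u$ commute with $\phi$ itself: for instance $\phi\phi_h\phi^{-1}=\phi_h$ gives $\exp(\ad(\phi h))=\exp(\ad h)$, and injectivity of $\exp$ on hyperbolic derivations together with $\fz(\g)=\{0\}$ yields $\phi(h)=h$ in one line. This is how the paper handles the nilpotent part, and it spares you the separate commutation checks against each factor.
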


\begin{prf} We consider the mutliplicative Jordan decomposition
  $\phi = \phi_s \phi_u$ into commuting semisimple and unipotent
  factors in the algebraic group $\Aut(\g)$ (\cite[Ch.~3, \S 2, Thm.~6]{OV90}).

  First we show that $\phi$ is semisimple, i.e., that $\phi_u = \id_\g$.
  If this is not the case, then $D_n := \log \phi_u$ is a derivation
  of $\g$, hence of the form $\ad x_n$ for some nilpotent element
  $x_n \in \g$. Then $\ad x_n$ commutes with $\phi$, which means that
  $x_n \in \Fix(\phi)$. But this subalgebra is compactly embedded,
  so that $x_n$ is also elliptic and therefore $x_n = 0$.

  This shows that $\phi$ is semisimple, so that
  $\g_\C$ decomposes into $\phi$-eigenspaces $\g_\C(\lambda)$, satisfying
  \[ [\g_\C(\lambda), \g_\C(\mu)] \subeq \g_\C(\lambda\mu)
  \quad \mbox{ for } \quad \lambda, \mu \in \C^\times.\]
  For $\theta\in \R$, let
  \[ \g_\C^\theta := \sum_{|\lambda| = e^\theta} \g_\C(\lambda).\]
  These subspaces are invariant under complex conjugation, so that
$\g^\theta := \g_\C^\theta \cap \g$ 
  defines a family of real subspace of $\g$ satisfying
  \[ [\g^{\theta_1}, \g^{\theta_2}] \subeq \g^{\theta_1 + \theta_2}.\]
  We thus obtain a diagonalizable derivation $D_h$ of $\g$ by
  \[ D_h x = \theta x \quad \mbox{ for } \quad x \in \g^\theta.\]
  Now $D_h =\ad x_h$ for some hyperbolic element $x_h \in \g$ and,
  as above, it follows that $x_h \in \Fix(\phi)$. Now $x_h$
  is hyperbolic and elliptic, and we arrive at $x_h = 0$.
  This proves that all eigenvalues of $\phi$ have unit length,
  which means that $\phi$ is elliptic in $\Aut(\g)$. 
\end{prf}

\begin{rem} (a) The preceding lemma works with the same argument
  for any Lie algebra $\g$ for which
  $\ad \: \g \to \der(\g)$ is bijective, i.e., $\fz(\g) = \{0\}$
  and all derivations are inner.

  \nin (b) If $\g$ is the $3$-dimensional Heisenberg algebra with the
  basis $p,q,z$ satisfying $[p,q] = z$ and $z$ is central, then
  \[ \phi(p) = 2p, \quad \phi(q) = \frac{1}{2} q, \quad
    \phi(z) = z \]
  is an automorphism for which $\Fix(\phi) = \fz(\g)$ is compactly
  embedded, but $\phi$ is hyperbolic and not elliptic. 
\end{rem}

 \begin{lem} \mlabel{lem:component-incl}
   $G^{se}(0) \subeq [e,\exp(2\pi z)]_{\prec} = S_W^\circ \cap (\exp 2\pi z) (S_W^\circ)^{-1}$.
    \end{lem}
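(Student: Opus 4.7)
The plan is to reduce to elements of the form $\exp x$ with $x \in \ft^{se}(0)$ using conjugation invariance, and then read off both conditions from the description of $\ft^{se}(0)$ as an order interval with respect to $W_\ft^{\max}$.

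First I would note that $\exp(2\pi z)$ is central in $G$: indeed $2\pi z \in \ft \cap \fz(\fk)$ and $\Spec(\ad z) \subeq \{0, \pm i\}$, so $e^{2\pi \ad z} = \id$. Furthermore, the cone $W = W_\g^{\max}$ is $\Ad(G)$-invariant, hence so are $S_W$ and $S_W^\circ$; combined with the centrality of $\exp(2\pi z)$, this shows that the right-hand side $S_W^\circ \cap \exp(2\pi z)(S_W^\circ)^{-1}$ is invariant under conjugation by $G$. Since Proposition~\ref{prop:3.11}(a) together with the definition gives $G^{se}(0) = \exp(\Ad(G)\ft^{se}(0))$, it therefore suffices to prove $\exp x \in S_W^\circ \cap \exp(2\pi z)(S_W^\circ)^{-1}$ for every $x \in \ft^{se}(0)$.

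Next I would exploit the symmetry of $\ft^{se}(0)$ about $\pi z$: for $x \in \ft^{se}(0)$ and $\alpha \in \Delta_p^+$ we have $0 < i\alpha(x) < 2\pi$, and since $i\alpha(z) = 1$ by our choice of positive system, also $0 < 2\pi - i\alpha(x) = i\alpha(2\pi z - x) < 2\pi$. Hence $2\pi z - x \in \ft^{se}(0)$ as well. Applying Lemma~\ref{lem:3.6} (which gives $\ft^{se}(0) \subeq W^{se} \subeq W^\circ$), both $x$ and $2\pi z - x$ lie in $W^\circ$.

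Finally, Lemma~\ref{lem:connected}(f) yields $\exp x \in \exp(W^\circ) \subeq S_W^\circ$, giving the first condition. Since $x, z \in \ft$ commute, the same lemma applied to $2\pi z - x \in W^\circ$ gives
\[
 \exp(-x)\exp(2\pi z) = \exp(2\pi z - x) \in S_W^\circ,
\]
equivalently $\exp x \in \exp(2\pi z)(S_W^\circ)^{-1}$. This completes the inclusion. The argument is essentially an assembly of earlier structural results; the only points requiring care are the reduction step via conjugation invariance (which hinges on the centrality of $\exp(2\pi z)$) and the observation that $\ft^{se}(0)$ is stable under $x \mapsto 2\pi z - x$, neither of which poses a genuine obstacle.
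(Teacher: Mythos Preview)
Your proof is correct. The paper's argument rests on the same two ingredients—$\exp(W^\circ) \subeq S_W^\circ$ from Lemma~\ref{lem:connected}(f) and the reflection symmetry about $\pi z$—but carries them out at the group level rather than reducing to~$\ft$: it uses $\g^{se}(0) \subeq W^\circ$ directly for the first inclusion, and for the second invokes the component relations $G^{se}(0)^{-1} = G^{se}(-1)$ and $\exp(2\pi z)G^{se}(-1) = G^{se}(0)$ from \eqref{eq:invbas-alc} and \eqref{eq:comp-shift}. Your reduction to $\ft^{se}(0)$ via conjugation invariance and the explicit computation $\exp(2\pi z - x) = \exp(-x)\exp(2\pi z)$ is a slightly more hands-on route to the same conclusion; it avoids the component-labeling machinery at the cost of an extra reduction step. (A minor remark: the fact $G^{se}(0) = \exp(\Ad(G)\ft^{se}(0))$ you need is already the definition in Theorem~\ref{thm:conncomp}; Proposition~\ref{prop:3.11}(a) is stronger than necessary here.)
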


 \begin{prf} First, $\g^{se}(0) \subeq W^\circ$ implies
   that
   \[ G^{se}(0) = \exp(\g^{se}(0)) \subeq \exp(W^\circ) \subeq S_W^\circ,\]
where we have used Lemma~\ref{lem:connected}(f) for the last inclusion.
   We further find with \eqref{eq:invbas-alc} and \eqref{eq:comp-shift}
   that 
   \[ G^{se}(0)^{-1} \exp(2\pi z)
     = G^{se}(-1) \exp(2\pi z) = G^{se}(0),\]
   hence
   \[ G^{se}(0)  = \exp(2\pi z) G^{se}(0)^{-1} \subeq
     \exp(2\pi z)(S_W^\circ)^{-1}.\qedhere\]
\end{prf}

\subsection{The   Guichardet--Wigner quasimorphism} 
\mlabel{subsec:GW}

In this subsection we recall the Guichardet--Wigner
  quasimorphism as an effective tool to study
  the global structure of the simply connected covering groups
  of hermitian Lie groups.

Let $G$ be a $1$-connected simple hermitian Lie group
and $G = KAN$ be an Iwasawa decomposition. Then $K$ is also simply
connected, so that $\exp \: \fz(\fk) \to Z(K)_e$ is a diffeomorphism
and $Z(K)_e \cong \R$. Moreover, the projection $p_{\fz(\fk)} \: \fk \to \fz(\fk)$
along $[\fk,\fk]$, which is a homomorphism of Lie algebras, integrates to a
group homomorphism
\begin{equation}
  \label{eq:pz}
 p_Z \:  K \to \fz(\fk) \cong \R \quad \mbox{ with} \quad
 p_Z(\exp x) = p_{\fz(\fk)}(x) \quad \mbox{ for } \quad x \in \fk.
\end{equation}

\begin{defn} Let $G$ be a group. A function $f \: G \to \R$ is called a
  {\it quasimorphism} if
  \[ D_f := \sup_{g,h \in G} |f(gh)-f(g)-f(h)| < \infty.\]
\end{defn}

We consider the function 
  \[ \chi_{\rm Iwa} \: G = KAN \to Z(K)_e \cong \R, \quad
  \chi_{\rm Iwa}(kan) = p_Z(k) \]
  that restricts on $K$ to the homomorphism $p_Z$ from \eqref{eq:pz}.
  By \cite[\S 3.1]{BH12}, its homogenization
  \[ f_{GW}(g) := \lim_{m \to \infty} \frac{\chi_{\rm Iwa}(g^m)}{m} \]  
  defines a quasimorphism, called the
  {\it Guichardet--Wigner quasimorphism} 
  (see also \cite{GW78}, \cite{Sh06}).
  By \cite[\S 2.2.3]{Ca09}, homogeneous quasimorphisms are
  class functions.
  As the restrictions 
    of $\chi_{\rm Iwa}$ to the subgroups $AN$ and $K$ are homomorphisms,
    it coincides with     $f$ on these subgroups:
    \begin{equation}
      \label{eq:qh-hom}
      f_{GW}\res_K = p_Z \: K \to Z(K)_e \cong \R       \quad \mbox{ and } \quad
      f_{GW}\res_{AN} = 0.
    \end{equation}
 Further,
    $\Ad(G)\fn$ is the set of nilpotent elements
    of $\g$ , $\Ad(G)\fa$ the set of hyperbolic elements and
    $\Ad(G)\fk$ the set of elliptic elements (see \cite[\S IX.7]{Hel78}).
    Moreover,
    $\fn + \fa$ is a maximal triangulizable subalgebra and thus meets
    the adjoint orbit of every element with real spectrum.

    \begin{prop} \mlabel{prop:fgw2}
Let $x \in \g$ with the real Jordan decomposition
    \[ x = x_e + x_h + x_n, \]
    with $x_e$ elliptic, $x_h$ hyperbolic  and $x_n$ nilpotent
    and pairwise commuting.
    Then the quasimorphism $f_{GW}$ has the following properties:
\begin{itemize}
\item[\rm(a)] If $x_e = 0$, i.e., $x$ has real spectrum, 
      then $f_{\rm GW}(\exp x) = 0$.
\item[\rm(b)] If $x = x_e$ is elliptic and
  $\Ad(g)x \in \fk$, then $f_{\rm GW}(\exp x) = p_{\fz(\fk)}(\Ad(g)x)$.
\item[\rm(c)] On $G^{se}(0)$ it is the unique smooth
      conjugation invariant function that coincided with
      $p_Z$ on $K^{se}(0)$.
\end{itemize}
\end{prop}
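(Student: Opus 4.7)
The plan exploits throughout that $f_{GW}$ is a class function (being a homogeneous quasimorphism on a connected Lie group, by \cite[\S 2.2.3]{Ca09}), together with the explicit restrictions $f_{GW}|_K = p_Z$ and $f_{GW}|_{AN} = 0$ from \eqref{eq:qh-hom}.

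For (a), the decomposition $x = x_h + x_n$ with $\ad x_h$ diagonalizable over $\R$, $\ad x_n$ nilpotent, and the two commuting, shows that $\ad x$ has real spectrum. As recorded before the statement, $\fa + \fn$ meets the adjoint orbit of every such element, so there exists $g \in G$ with $\Ad(g) x \in \fa + \fn$. Then $g \exp(x) g^{-1} = \exp(\Ad(g) x) \in \exp(\fa + \fn) \subseteq AN$, and conjugation invariance combined with \eqref{eq:qh-hom} gives $f_{GW}(\exp x) = 0$. For (b), the hypothesis $\Ad(g) x \in \fk$ translates to $g \exp(x) g^{-1} = \exp(\Ad(g) x) \in K$, so conjugation invariance and \eqref{eq:qh-hom} yield
\[ f_{GW}(\exp x) \;=\; p_Z(\exp \Ad(g) x) \;=\; p_{\fz(\fk)}(\Ad(g) x). \]

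For (c), the strategy is to build a transparently smooth conjugation-invariant candidate via polar decomposition and then to identify it with $f_{GW}$. By Proposition \ref{prop:2.11}(b), every $g \in G^{se}$ factors uniquely as $g = mkm^{-1}$ with $m \in M$ and $k \in K^{se}$, and the resulting map $g \mapsto k$ is smooth; since conjugation preserves the connected components of $G^{se}$, it restricts to a smooth map $G^{se}(0) \to K^{se}(0)$. Because $p_Z \: K \to \fz(\fk)$ is a homomorphism into an abelian group, it is conjugation invariant on $K$, so $F(g) := p_Z(k)$ is a well-defined smooth conjugation-invariant function on $G^{se}(0)$ with $F|_{K^{se}(0)} = p_Z$. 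Writing each $k \in K^{se}(0)$ as $\exp x$ with $x \in \fk$ (via surjectivity of $\exp \: \fk \to K$) and applying (b) with $g = e$ gives $f_{GW}(k) = p_{\fz(\fk)}(x) = p_Z(k)$; conjugation invariance of $f_{GW}$ then forces $f_{GW} = F$ on all of $G^{se}(0)$, which establishes smoothness. Uniqueness is immediate: every $g \in G^{se}(0)$ is conjugate to an element of $K^{se}(0)$, so any continuous conjugation-invariant function on $G^{se}(0)$ is determined by its restriction to $K^{se}(0)$.

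The one substantive subtlety is the smoothness of $f_{GW}$ on $G^{se}(0)$: the defining limit $\lim_{m \to \infty} m^{-1} \chi_{\rm Iwa}(g^m)$ offers no direct regularity, and this is precisely what is circumvented by identifying $f_{GW}$ with the manifestly smooth polar-decomposition function $F$. Everything else is a direct appeal to class-function behavior and the two explicit formulae in \eqref{eq:qh-hom}.
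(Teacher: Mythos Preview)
Your proof is correct and follows essentially the same approach as the paper: parts (a) and (b) are handled identically via conjugation invariance together with the restrictions \eqref{eq:qh-hom}, and part (c) likewise uses the polar decomposition of Proposition~\ref{prop:2.11} to exhibit $f_{GW}$ on $G^{se}(0)$ as $p_Z$ composed with the smooth projection to $K^{se}(0)$. The only cosmetic difference is that you first build the candidate $F$ and then identify it with $f_{GW}$, whereas the paper goes in the reverse order; your explicit treatment of uniqueness is a small bonus the paper leaves implicit.
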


\begin{prf} The conjugacy
    invariance of $f_{GW}$ permits us to evaluate it on one-parameter
    groups generated by elements $x_h + x_n$ with real spectrum 
    and elliptic elements.

    \nin (a)  If $x = x_h + x_n\in \g$, then $f_{GW}(\exp x) = 0$
    follows from the fact that $x \in \Ad(G)(\fa + \fn)$
    (\cite[Thm.~15.14]{Bo91}) because
    $f_{GW}$ is conjugation invariant and vanishes on
    $\exp(\fa + \fn) = AN$. 

    \nin (b) follows directly from the conjugation invariance of $f_{GW}$.

    \nin (c) On $G^{se}(0)$ the function $f_{GW}$ is constant on
    conjugacy classes and coincides with $p_Z$ on
    $K^{se}(0) = G^{se}(0) \cap K$. 
      As $p_Z$ is smooth, 
      Proposition~\ref{prop:2.11} shows that $f_{GW}$ is smooth on $G^{se}(0)$.
\end{prf}

\begin{rem} (a) If $\zeta \in Z(G) \subeq K$, then
  $\chi_{\rm Iwa}((g\zeta)^n) = \chi_{\rm Iwa}(\zeta^n g^n) = n\chi_{\rm Iwa}(\zeta) + \chi_{\rm Iwa}(g^n)$, so that
  we also get
  \[ f_{GW}(g\zeta) = \chi_{\rm Iwa}(\zeta) + f_{GW}(g).\]

  \nin (b) The observation under (a) permits to calculate $f_{GW}$ for
  $G := \tilde\SL_2(\R)$. Let us call $g \in G$
  {\it elliptic, hyperbolic, unipotent} if $\Ad(g)$
  has the   corresponding property. Then an elementary discussion
  using the real Jordan normal form of elements of $\SL_2(\R)$ implies
  that every element of $G$ is elliptic, hyperbolic or unipotent.
  It is elliptic if and only if it is conjugate to an element $g' \in K$,
  and then
  \[ f_{GW}(g) = f_{GW}(g') = p_Z(g').\]
  If it is hyperbolic, it is conjugate to an element in
  $a\zeta \in A Z(G)$, and then
  \[ f_{GW}(a\zeta) = p_Z(\zeta).\]
  If it is unipotent, it is conjugate to an element of
  $N Z(G)$, and we also have
  \[ f_{GW}(n\zeta) = p_Z(\zeta).\]
\end{rem}

\begin{lem} \mlabel{lem:powermonotone}
  Let $S$ be an invariant subsemigroup of a group $G$.
We consider on $G$ the biinvariant order relation defined by
  \[ g_1 \preceq_S g_2 \quad \mbox{ if } \quad g_1^{-1} g_2 \in S.\]
  Then the following assertions hold:
  \begin{itemize}
  \item[\rm(a)] $a \preceq_S b$ and $c \preceq_S d$ imply 
    $ac \preceq_S bd$. 
  \item[\rm(b)] The power maps $p_n \: G \to G, g \mapsto g^n$,
    are monotone. 
  \end{itemize}
\end{lem}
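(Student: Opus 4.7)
The proof is essentially formal manipulation once the invariance hypothesis $gSg^{-1} \subeq S$ for all $g \in G$ is used correctly. I expect no serious obstacles — this should be a short, direct argument.

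For part (a), my plan is to compute $(ac)^{-1}(bd) = c^{-1}(a^{-1}b)c \cdot c^{-1}d$ and to note that both factors lie in $S$: the factor $c^{-1}(a^{-1}b)c$ lies in $c^{-1} S c \subeq S$ by invariance of $S$, and $c^{-1}d \in S$ by hypothesis. Since $S$ is a semigroup, the product lies in $S$, which is exactly the statement $ac \preceq_S bd$.

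For part (b), I would argue by induction on $n \geq 1$. The base case $n=1$ is immediate. For the inductive step, assume $g^{n-1} \preceq_S h^{n-1}$ whenever $g \preceq_S h$. Given $g \preceq_S h$, apply part (a) with $a = g^{n-1}$, $b = h^{n-1}$, $c = g$, $d = h$ to obtain $g^n = g^{n-1} g \preceq_S h^{n-1} h = h^n$. This establishes monotonicity of $p_n$ for all positive integers $n$, which is the intended reading of the statement in this context.
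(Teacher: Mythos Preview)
Your proof is correct and follows essentially the same approach as the paper. For (a) the paper phrases the computation in coset form, deriving $bd \in aScS = acSS \subeq acS$ from invariance, while you compute $(ac)^{-1}(bd)$ directly and factor it; both are the same conjugation argument. For (b) the paper likewise proceeds by induction via (a).
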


\begin{prf} (a) From $b \in a S$ and $d \in cS$, we derive that
  $bd \in a S c S = a c S S \subeq ac S,$   hence $ac\preceq_S bd$.

  \nin (b) Inductively we derived from (a) that
$a_j \preceq_S b_j$ for $j = 1,\ldots, n$,   implies that
  \[ a_1 \cdots a_n \preceq_S b_1 \cdots b_n\] 
  and this implies (b).
\end{prf}

If $G$ is  simply connected, 
then the maximal invariant cone $W = W_\g^{\rm max}$
is global in $G$ if  and only if $\g$ is of {\it tube type}, 
i.e., if the corresponding Riemannian symmetric 
space $G/K$ is biholomorphic to a tube domain $T_\Omega = \Omega + i V$, 
where $\Omega$ is an open symmetric cone in the real vector
space~$V$ (actually a euclidean Jordan algebra) (\cite{Ol82}, \cite{Ne90},
\cite[Thm.~VIII.12]{Ne93}).
If $\g$ is of tube type,
this can be derived from the monotonicity of the function
  \[  \chi_{\rm Iwa} \: KAN \to Z(K)_e \cong \R, \quad  kan \mapsto p_{Z}(k). \]

  \begin{prop} \mlabel{prop:tt-fgw}
    For a hermitian Lie algebra $\g$ of tube type,
    the Guichardet--Wigner quasimorphism $f_{GW}$ has the following
  properties:
  \begin{itemize}
  \item[\rm(a)] It defines a monotone function
    $f_{GW} : (G, \preceq_{S_W}) \to \R$.
  \item[\rm(b)] If $0 \not=x \in W$  is not nilpotent,
    then there exist $C > 0$ such that 
      $f_{GW}(\exp(tx)) \geq t C$ for $t \in \R$.
  \end{itemize}
\end{prop}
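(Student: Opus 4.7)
The two parts require different ingredients; I outline each separately.

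\textbf{Part (a).} Given $g_1 \preceq_{S_W} g_2$, Lemma~\ref{lem:powermonotone}(b) applied to the invariant subsemigroup $S_W$ yields $g_1^m \preceq_{S_W} g_2^m$ for every $m \in \N$. The monotonicity of $\chi_{\rm Iwa}$ with respect to $\preceq_{S_W}$ (recalled just before the proposition) then gives $\chi_{\rm Iwa}(g_1^m) \leq \chi_{\rm Iwa}(g_2^m)$; dividing by $m$ and passing to the limit defining $f_{GW}$ yields $f_{GW}(g_1) \leq f_{GW}(g_2)$.

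\textbf{Part (b).} I first record the standard fact that a homogeneous quasimorphism is additive on commuting elements: for commuting $a,b$, the identity $(ab)^n = a^n b^n$ combined with homogeneity forces $|f_{GW}(ab)-f_{GW}(a)-f_{GW}(b)| \leq D_{f_{GW}}/n$ for every $n$, hence $=0$. Let $x = x_e + x_h + x_n$ be the real Jordan decomposition with commuting summands. Applying additivity together with Proposition~\ref{prop:fgw2}(a) (which kills the hyperbolic and nilpotent contributions) leaves $f_{GW}(\exp(tx)) = f_{GW}(\exp(tx_e))$. Conjugating $x_e$ into $\fk$ by some $g \in G$ and invoking Proposition~\ref{prop:fgw2}(b) together with homogeneity then gives $f_{GW}(\exp(tx)) = t\,p_{\fz(\fk)}(\Ad(g)x_e)$, so the claim reduces to showing that $C := p_{\fz(\fk)}(\Ad(g)x_e) > 0$.

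For positivity I use two structural facts about $W = W_\g^{\rm max}$ in the simple hermitian setting: (i) every $x \in W$ has $\ad(x)$-spectrum contained in $i\R$, so $x_h = 0$, forcing $x_e \neq 0$ since $x$ is not nilpotent; and (ii) the elliptic Jordan component $x_e$ of $x \in W$ itself lies in $W$. Granted these, further conjugation inside $K$ places $y := \Ad(g)x_e$ in $W_\ft^{\rm max}$, and I decompose $y = \lambda z + y'$ with $y' \in \ft \cap [\fk,\fk]$. Summing the defining inequalities $i\alpha(y) \geq 0$ over $\alpha \in \Delta_p^+$ and using that $\sum_{\alpha \in \Delta_p^+} \alpha$ is $\cW_\fk$-invariant, so it vanishes on the semisimple part $\ft \cap [\fk,\fk]$ while evaluating to $-i|\Delta_p^+|$ on $z$, I obtain $\lambda \geq 0$; strict positivity follows from $y \neq 0$ together with the fact that $\Delta_p$ spans $i\ft^*$ for simple hermitian~$\g$.

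\textbf{Main obstacle.} The delicate step is input (ii). My planned argument is a Jacobson--Morozov contraction: choosing an $\fsl_2$-triple $(H,E,F)$ inside the reductive centralizer $\fz_\g(x_e)$ with $E = x_n$, the orbit $\Ad(\exp(sH))(x_e + x_n) = x_e + e^{2s}x_n$ lies in the closed $\Ad(G)$-invariant cone $W$ and converges to $x_e$ as $s \to -\infty$, so $x_e \in W$ by closedness. Input (i) follows from the explicit description of $W_\ft^{\rm max}$ combined with the fact that $W^\circ$ consists of elliptic elements in the hermitian setting; both can alternatively be cited from \cite{HHL89, Ne99}.
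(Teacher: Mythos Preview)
Your proof is correct. Part (a) is identical to the paper's argument.

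For part (b), you take a genuinely different route. The paper uses the monotonicity from (a): since $x_n \in W$ (via \cite[Cor.~B.2]{NOe22}, which is exactly your Jacobson--Morozov contraction), one has $\exp(tx_e) \preceq_{S_W} \exp(tx_e)\exp(tx_n) = \exp(tx)$ for $t \geq 0$, whence $f_{GW}(\exp(tx)) \geq f_{GW}(\exp(tx_e)) = t\,p_{\fz(\fk)}(y_e)$. You instead invoke the additivity of homogeneous quasimorphisms on commuting elements together with Proposition~\ref{prop:fgw2}(a) to obtain the \emph{equality} $f_{GW}(\exp(tx)) = f_{GW}(\exp(tx_e))$ directly, without appealing to (a). This is cleaner, yields a statement valid for all $t \in \R$ rather than just $t \geq 0$, and decouples (b) from the tube-type hypothesis needed for (a). On the other hand, the paper's monotonicity route makes the order-theoretic content more visible and avoids the (admittedly standard) additivity lemma.

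Your Jacobson--Morozov argument for $x_e \in W$ is essentially the proof behind the citation \cite[Cor.~B.2]{NOe22}, and your positivity argument for $C$ (summing $i\alpha(y)$ over $\Delta_p^+$ and using that $\cW_\fk$ preserves $\Delta_p^+$ in the hermitian case) fills in a step the paper leaves implicit.
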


\begin{prf}
  (a) follows from the fact it is a limit of the monotone functions
  $g \mapsto \frac{\chi_{\rm Iwa}(g^m)}{m}$ whose monotonicity follows from
  Lemma~\ref{lem:powermonotone} and the monotonicity of $\chi_{\rm Iwa}$.

  \nin (b) By (a) the function $f_{GW}$
  is monotone on all curves $t \mapsto \exp(tx)$ for $x \in W$. 
For $x \in \partial W$, we have a Jordan decomposition
$x = x_e + x_n$, where $x_e$ is elliptic and $x_n$ nilpotent,
both contained in~$W$.
That $x_e$ and $x_n$ are both contained in $W$ follows from 
\[ x_e + \R_+ x_n  \subeq \Ad(G)x \subeq W\]
(\cite[Cor.~B.2]{NOe22}). 
With the monotonicity of $f_{GW}$ this implies that 
\[   f_{GW}(\exp(t x))
  = f_{GW}(\exp(t x_e) \exp(t x_n))
\geq f_{GW}(\exp(t x_e)).\] 
As $x_e$ is conjugate to some $y_e \in W \cap \fk$, 
\[ f_{GW}(\exp tx_e) = f_{GW}(\exp ty_e)
  = p_Z(\exp t y_e) = t p_{\fz(\fk)}(y_e), \]
and this proves (b).
\end{prf}

\begin{ex}
  Let $G=\widetilde\Sp_{2n}(\R)$ be the simply connected covering of
  $\Sp_{2n}(\R)$. In this case  $K\cong \widetilde\U_n(\C)$
  and the Guichardet--Wigner quasimorphism $f_{GW}$ coincides with the well-known Maslov quasimorphism, which was first considered in \cite{GL58} and plays a key role in the study of Hamiltonian systems and their stability, see e.g. \cite{SZ92}. It is the unique homogeneous quasimorphism on $\widetilde{\Sp}_{2n}(\R)$ that coincides on $\widetilde\U_n(\C)$
  with the lift of the complex determinant \cite{BS10}.
Under the natural embedding of $\widetilde\Sp_{2n}(\R)$
into the universal cover of the identity component of the group $\mathrm{Cont}_0(\R P^{2n-1})$ of diffeomorphisms of $\R P^{2n-1}$ preserving the standard contact structure, the Maslov quasimorphism extends
to a homogenous quasimorphism on $\widetilde\Cont_0(\R P^{2n-1})$,
known as Givental's asymptotic non-linear Maslov index.
\end{ex}

\subsection{The causal  flag manifold} 

Let $\fa \subeq \fp$ be maximal abelian and
pick $h \in \fa$, so that its eigenspaces
$\g_\lambda := \g_\lambda(h)$, $\lambda = -1,0,1$, define a $3$-grading of $\g$.
By \cite[Prop.~3.11(b)]{MN21}, such an element exists if and only if $\g$
is of tube type. 
We write
\[  G_{\pm 1} := \exp(\g_{\pm 1}) \quad \mbox{ and } \quad
  P^\pm := G^h G_{\pm 1}, \quad
  G^h = Z_G(h) =\{ g\in G \: \Ad(g)h = h\}.\]
Then
\[  M := G/P^- \]
is a compact homogeneous space
with a natural $G$-invariant causal structure specified by
the cone
\[ C_+ := W \cap \g_1 \subeq \g_1 \cong T_{m_0}(M), \quad \mbox{ where }\quad
  m_0 := eP^- \in M \]
is the base point. Identifying the tangent space in $m_0$ with
$\g/\fp^- \cong \g_1$, the cone $C_+ = W \cap \g_1 = p_{\g_1}(W)$ 
(\cite[Lemma~3.2]{NOO21}) is invariant under the $P^-$-action, hence defines a
$G$-invariant causal structure on~$M$.

As $M$ is compact, the causal structure on $M$ possesses
  closed causal curves, but this changes if we consider its simply
  connected covering $\tilde M := G/P^-_e$. The order on $\tilde M$
  is specified by the Lie subsemigroup
  \[ S_M := \oline{S_W P^-_e} = \oline{\la \exp(W + \fp^-)\ra}.\]

\begin{prop} $S_M \cap \exp(\g_1) = \exp(C_+)$.     
\end{prop}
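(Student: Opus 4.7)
The plan is to show both inclusions, with the bulk of the work in the direction $S_M\cap\exp(\g_1)\subseteq\exp(C_+)$. The inclusion $\exp(C_+)\subseteq S_M\cap\exp(\g_1)$ is immediate from $C_+=W\cap\g_1\subseteq W$, which yields $\exp(C_+)\subseteq\exp(W)\subseteq S_W\subseteq S_WP^-_e\subseteq S_M$.

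For the reverse inclusion, set $K:=\exp^{-1}(S_M\cap\exp(\g_1))\subseteq\g_1$, so the goal becomes $K=C_+$. First we show that $K$ is a closed convex cone. Since $\g_1$ is abelian (as the $+1$-eigenspace of $\ad h$ in the $3$-grading), $\exp\colon\g_1\to G_1:=\exp(\g_1)$ is a Lie group isomorphism; hence $K$ is closed and is a subsemigroup of $(\g_1,+)$. For conicity, observe that $\ad h$ acts as the identity on $\g_1$, so conjugation by $\exp(th)$ sends $\exp(y)\in G_1$ to $\exp(e^t y)$. This conjugation also preserves $S_M$: it preserves $S_W$ because $W$ is $\Ad(G)$-invariant, and it preserves $P^-_e$ because $\exp(th)\in G^h\subseteq P^-$ normalizes $P^-_e$; hence it preserves $S_WP^-_e$ and its closure $S_M$. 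Consequently $K$ is invariant under scaling by $e^t$ for every $t\in\R$, and a closed additive subsemigroup of $\g_1$ invariant under positive scaling is automatically a closed convex cone.

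The remaining step is $K\subseteq C_+$. Given $x\in K$, the conicity yields $\exp(tx)\in S_M$ for every $t\ge 0$, so the smooth curve $\gamma(t):=\exp(tx)\cdot m_0$ stays in the causal future $F:=S_M\cdot m_0\subseteq\tilde M$ of $m_0$ (using $S_MP^-_e=S_M$). Under the identification $T_{m_0}\tilde M\cong\g/\fp^-\cong\g_1$, one has $\gamma(0)=m_0$ and $\gamma'(0)=x$, so $x$ belongs to the tangent cone of the closed set $F$ at $m_0$. This tangent cone equals the causal cone $C_+$ at $m_0$, which yields $x\in C_+$ and completes the proof.

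The main obstacle is the identification of the tangent cone of $F$ at $m_0$ with $C_+$, which is equivalent to the Lie wedge computation $\L(S_M)=W+\fp^-$. The easy inclusion $W+\fp^-\subseteq\L(S_M)$ follows from $W=\L(S_W)\subseteq\L(S_M)$, $\fp^-\subseteq\L(S_M)$, and convexity of the Lie wedge; the reverse inclusion, that any $y\in\L(S_M)$ has $p_{\g_1}(y)\in C_+$, reflects the standard fact that a one-parameter curve remaining in the causal future of a point must leave that point in a future-directed infinitesimal direction. This is a general feature of smooth causal manifolds with continuous cone field and applies to the $G$-invariant causal structure on $\tilde M$; projecting $\L(S_M)=W+\fp^-$ along $\fp^-$ then recovers $C_+=p_{\g_1}(W)$, as required.
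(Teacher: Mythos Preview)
Your reduction to showing that $K=\exp^{-1}(S_M\cap\exp(\g_1))$ is a closed convex cone is correct, and the scaling argument via conjugation by $\exp(th)$ is clean. The gap is in the last step, where you assert that the tangent cone of $F=S_M\cdot m_0$ at $m_0$ equals $C_+$. Your attempted justification reduces this to $\L(S_M)=W+\fp^-$ and then appeals to a ``standard fact'' about causal manifolds; but that standard fact (the Bouligand tangent cone of $\overline{J^+(p)}$ at $p$ equals the causal cone $C_p$) applies to the causal future $J^+(m_0)$ defined via causal curves in the cone field, not to the semigroup orbit $S_M\cdot m_0$. You have not shown these coincide, and the statement $\L(S_M)\cap\g_1=C_+$ that you are invoking is literally the proposition you are proving. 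The argument can be repaired: one first checks that the $G$-action on $\tilde M$ is monotone (this is Lemma~\ref{lem:GM-mon} in the paper, whose proof is independent and elementary), which gives $\langle\exp W\rangle\cdot m_0\subseteq J^+(m_0)$ and hence $S_M\cdot m_0\subseteq\overline{J^+(m_0)}$; then the standard tangent-cone fact for smooth closed cone fields yields the inclusion. But as written, this link is missing and the reasoning is circular.

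The paper's proof takes a completely different route. Rather than passing to $\tilde M$, it works directly in the euclidean Jordan algebra $\g_1$: using the Spectral Theorem it writes any $x\in S_1\setminus C_+$ (up to $G^h_e$-conjugacy) as a signed sum of idempotents from a Jordan frame, and then produces, via an explicit $G^h_e$-conjugation, another element $x''\in S_1$ such that $x'+x''$ has strictly more negative spectral values---contradicting a maximality choice. This is a self-contained algebraic argument that uses only the $G^h_e$-invariance of $S_1$ (which is stronger than the dilation invariance you used) and avoids any globality or causal-manifold input. Your approach, once completed, is more geometric and would apply in principle to other ordered homogeneous spaces, but it trades the Jordan-algebra computation for a dependence on the local cone-field tangent result and the monotonicity lemma.
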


\begin{prf} The subset
  $S_1 := \{x \in \g_1 \:  \exp(x) \in S_M\}$ is a closed subsemigroup
  of the additive group  $\g_1$ containing the closed convex cone $C_+$
  and invariant under the group $G^h_e$.

  Let $x \in \g_1 \setminus C_+$.
  If $(c_1, \ldots, c_r)$ is a Jordan frame in $\g_1$,
    the Spectral Theorem for Euclidean Jordan Algebras
    \cite[Thm.~III.1.3]{FK94} implies that the
    orbit of $x$ contains an element of the form
  \[ x' := c_1 + \cdots + c_k  - c_{m+1} - \cdots - c_r, \quad
    1 \leq k \leq m < r\]
  (see also \cite[Cor.~IV.2.7]{FK94} for the transitivity if
  $K^h_e$ on the set of Jordan frames).
If $x \in - C_+$, then the
  subsemigroup generated by $x$ and $C_+$ contains the cone
  $C_+ + \N x = C_+ + \R x$, which
  is excluded. Therefore $x$ has at least one positive spectral value,
  which means that $k \geq 1$. Further, $m < r$ corresponds  to
  $x \not\in C_+$. 
  We assume that the number $r- m$ of negative spectral values is maximal.
  The left multiplication $L(x)y := xy$ on the Jordan algebra then satisfies
  $e^{L(x)} \in \Ad(G^h_e)$ and
  \[ e^{\log(2)(L(c_r) - L(c_1))}x'
    = \frac{1}{2} c_1 + \cdots + c_k  - c_{m+1} - \cdots - 2 c_r.\]     
Therefore  $x'$ is conjugate to     
 \[ x'' := -2 c_1 + c_2 + \cdots + c_k
   - c_{m_1} - \cdots - c_{r-1} + \frac{1}{2} c_r,\]
 so that
 \[ x' + x'' := - c_1 + 2c_2 + \cdots + 2c_k
   - 2c_{m_1} - \cdots - 2c_{r-1} - \frac{1}{2} c_r\]
 has $r - m + 1$ negative spectral values, contradicting the maximality
 of $r-m$. This contradiction implies that $S_1 = C_+$,
 so that $S_M \cap G_1 = \exp(C_+)$.
\end{prf}

\begin{cor} \mlabel{cor:ord-emb} The open embedding
  \[ (\g_1, \preceq_{C_+}) \into (\tilde M, \preceq), \quad x \mapsto
  \exp(x) P^- \]
 is an order embedding. 
\end{cor}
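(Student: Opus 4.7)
The plan is to derive the corollary directly from the preceding proposition. For the fact that $x \mapsto \exp(x) P^-_e$ is an open embedding I would appeal to the standard big-cell decomposition associated with the $3$-grading: since $\g = \g_{-1} \oplus \g_0 \oplus \g_1$ and $\fp^- = \g_0 + \g_{-1}$, the differential of this map at $0$ identifies $\g_1$ with $T_{eP^-_e}(\tilde M) \cong \g/\fp^-$, and global injectivity follows from $\exp(\g_1) \cap P^- = \{e\}$ (a consequence of the Bruhat-type decomposition, in which $\exp(\g_1) \cdot P^- \subeq G$ is a direct product onto an open subset). By invariance of domain, one obtains an open embedding into $\tilde M$.

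For the order-embedding property I would exploit that $\g_1$ is abelian, being the top piece of a $3$-grading, so $\exp \: \g_1 \to G_1$ is an isomorphism of Lie groups and
\[ \exp(x)^{-1}\exp(y) = \exp(y-x) \qquad \text{for all } x,y \in \g_1. \]
Since $\fp^- \subeq \L(S_M)$, the group $P^-_e$ is generated by $\exp(\fp^-) \subeq S_M$, so $P^-_e \subeq S_M$ and hence $S_M P^-_e = P^-_e S_M = S_M$. Consequently the relation $\preceq$ on $\tilde M$ is well defined on cosets, and for $x,y \in \g_1$ one has
\[ \exp(x)\, P^-_e \preceq \exp(y)\, P^-_e \iff \exp(x)^{-1}\exp(y) \in S_M \iff \exp(y-x) \in S_M. \]
By the preceding proposition, $\exp(y-x) \in \exp(\g_1)$ lies in $S_M$ if and only if it lies in $S_M \cap \exp(\g_1) = \exp(C_+)$, which is equivalent to $y-x \in C_+$ because $\exp \: \g_1 \to G_1$ is a bijection. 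This is precisely $x \preceq_{C_+} y$, proving the order-embedding property.

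There is no genuine obstacle, since the preceding proposition already contains the essential content; the only points to verify carefully are the containment $P^-_e \subeq S_M$ (which is what makes the $S_M$-order descend unambiguously to $\tilde M$) and the commutativity of $\g_1$ that upgrades $\exp\res_{\g_1}$ to a group homomorphism, both of which follow immediately from the structure of the $3$-grading.
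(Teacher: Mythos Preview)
Your proof is correct and follows exactly the route the paper intends: the corollary is stated without proof as an immediate consequence of the preceding proposition $S_M \cap \exp(\g_1) = \exp(C_+)$, and your argument supplies precisely the natural details (abelianness of $\g_1$, the inclusion $P^-_e \subeq S_M$ making the order well defined on cosets, and the reduction to $\exp(y-x) \in S_M$). There is nothing to add.
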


\begin{lem} \mlabel{lem:GM-mon} If $\gamma \: [a,b] \to G$ is a $W$-causal curve
  and $m \in \tilde M$, then the curve
  $\gamma_M(t) := \gamma(t).m$ is causal in $\tilde M$.
\end{lem}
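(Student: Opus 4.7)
The plan is to verify the causal condition infinitesimally: show that at each $t$, the derivative $\gamma_M'(t)$ lies in the cone at $\gamma_M(t)$ defining the $G$-invariant causal structure on $\tilde M$. Since $\tilde M \to M$ is a covering of causal manifolds and the lift of a causal curve is causal, it suffices to work on $M$ (or equivalently, to compare tangent vectors in $\tilde M$ through the local diffeomorphism to $M$).

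First I would unpack the definitions. The $W$-causality of $\gamma$ means that $u(t) := \gamma(t)^{-1}\gamma'(t) \in W$ for all $t$. The $G$-invariant causal structure on $\tilde M$ is specified at the basepoint $m_0 = eP^-_e$ by the cone $C_+ = p_{\g_1}(W) \subseteq \g_1 \cong T_{m_0}\tilde M$, so the cone at any point $m = h.m_0$ is the transported cone $h_*C_+$.

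Second, I would compute the derivative of $\gamma_M$ via the fundamental vector field $X_u(m) := \frac{d}{ds}\big|_{s=0}\exp(su).m$ of the $G$-action. The chain rule gives
\[ \gamma_M'(t) = \gamma(t)_*\, X_{u(t)}(m). \]
Writing $m = h.m_0$, the standard identity $\exp(su).h = h\exp(s\Ad(h^{-1})u)$ yields $X_u(h.m_0) = h_*\, X_{\Ad(h^{-1})u}(m_0)$, and under the identification $T_{m_0}\tilde M \cong \g_1$ one has $X_v(m_0) = p_{\g_1}(v)$. Combining these,
\[ \gamma_M'(t) = (\gamma(t)h)_*\, p_{\g_1}\bigl(\Ad(h^{-1})u(t)\bigr). \]

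Finally, I would invoke the $\Ad(G)$-invariance of $W$ together with the identity $p_{\g_1}(W) = C_+$ (as recalled above from \cite[Lemma~3.2]{NOO21}): since $u(t) \in W$, also $\Ad(h^{-1})u(t) \in W$, hence $p_{\g_1}(\Ad(h^{-1})u(t)) \in C_+$. Therefore $\gamma_M'(t) \in (\gamma(t)h)_*C_+$, which is precisely the cone at $\gamma_M(t) = (\gamma(t)h).m_0$. This proves causality of $\gamma_M$. The only mild subtlety is that $\gamma(t)$ is a priori in $G$ while the cone field lives on $\tilde M$, but since the $G$-action on $\tilde M$ lifts that on $M$ and preserves the causal structure by construction, the identification is routine; I do not expect any substantive obstacle.
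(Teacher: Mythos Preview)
Your proof is correct and follows essentially the same route as the paper: both compute the differential of the orbit map $g \mapsto g.m$ and show it sends the cone $g.W$ to the cone at $g.m$, using the $\Ad(G)$-invariance of $W$ together with $p_{\g_1}(W) = C_+$ at the base point. The paper compresses the computation into a single line, while you unpack the fundamental-vector-field identity $X_v(h.m_0) = h_*X_{\Ad(h^{-1})v}(m_0)$ explicitly, but the argument is the same.
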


\begin{prf} We write $m = g_0 P^-_e$, $m_0 = eP^-_e$ and consider the
  map $F \:  G \to \tilde M, g \mapsto g.m = gg_0 P^-_e$.
  We have to show that $F$ is monotone. This follows from 
  \[ \dd F(g)(g.W) 
    = (g.W).g_0.0_{m_0} 
    = (gg_0.W).0_{m_0}  = gg_0.p_{\g_1}(W) = gg_0.C_+.
    \qedhere\]   
\end{prf}

\subsection{Connecting $G^{se}(0)$ with compact order intervals}

For $t > 0$, we consider the elements $\exp(tz) \in S_W^\circ$.
Next we determine for which values of $t$ we have
$\exp(tz) \in \comp(S_W)$. By Lemma~\ref{lem:connected}(b),
there exists a $T > 0$ such that this is the case
for $t < T$ and never for $t \geq T$.
We now show that $T = 2\pi$.  We first take a closer look
at the case $\g = \fsl_2(\R)$. 

\begin{ex} \mlabel{ex:sl2}
  For $\g = \fsl_2(\R)$, we consider the natural $3$-grading defined by
  the element
  \[ h := \frac{1}{2} \pmat{1 & 0 \\ 0 & -1} \in
    \fa = \R h \quad \mbox{ and } \quad
    z := \frac{1}{2} \pmat{0 & 1 \\-1 & 0}.\]
  Then 
  \[ \fk = \fz(\fk) = \R z = \so_2(\R) \quad \mbox{ and } \quad
    \g_1 = \R \pmat{0 & 1 \\ 0 & 0}= \fn, \quad 
    \g_0 = \fa = \R h, \quad 
    \g_{-1} = \R \pmat{0 & 0 \\ 1 & 0}.\]
  The action of $\SL_2(\R)$ on $\R_\infty \cong \bS^1$ by M\"obius transformations
  \[  \pmat{ a & b \\ c & d}.x = \frac{a x + b}{c x + d}\]
  then implies that
  \[ M = G/P^- \cong \R_\infty \cong \bS^1 \quad \mbox{ and }\quad
    \tilde M \cong \tilde \R_\infty \cong \R.\]
  Here we use that $\fa + \g_{-1} = \fp^-$
  is the Lie algebra of the stabilizer of $0$. 
We consider the corresponding action of 
$G = \tilde\SL_2(\R)$ on $M = \R_\infty$.
Then
\[  \exp(tz).x = \pmat{ \cos(t/2) & \sin(t/2)  \\
    -\sin(t/2) & \cos(t/2)}.x = \frac{\cos(t/2) x + \sin(t/2)}
  {-\sin(t/2)x + \cos(t/2)}\]
with
\begin{equation}
  \label{eq:tan1}
  \exp(tz).0 = \tan(t/2).
\end{equation}
Hence $g \in \exp((-\pi,\pi)z) AN$ implies that $g.0 \in \R$,
and therefore
\[ \exp(tz).P^- \subeq \exp(\g_1)P^- \quad \mbox{ for } \quad |t| < \pi.\]
With this observation, we can also evaluate $\chi_{\rm Iwa}$ to 
\[ \chi_{\rm Iwa}(g) = 2\arctan(g.0).\]
For $g = \exp(sw), w = \pmat{0 & 1 \\ 0 & 0}$, this leads with
$g.0 = s$ to
\[ \chi_{\rm Iwa}(\exp(sw)) = 2 \arctan(s).\]
We therefore derive from $\exp(\R_+ w) \subeq [e,\exp(tz)]$ that
\[  2 \arctan(\alpha) \leq t - 2 \arctan(\beta) \quad
  \mbox{ for }\quad \alpha, \beta \geq 0,\]
and this implies that $t \geq 2 \pi$.

Alternatively, we may observe that, for all $s > 0$, we have 
\[ \exp(sw).\exp\big(-\frac{t}{2}z\big).0
  = s - \tan(t/4) \leq
  \exp\Big(\frac{t}{2}z\Big).0 = \tan(t/4),\]
which is absurd.
\end{ex}

\begin{prop} \mlabel{prop:4.14} If $\g$ is simple hermitian of tube type
  and $W = W_\g^{\rm max}$, then
    $\exp(tz) \in \comp(S_W)$ if and only if
  $0 < t < 2\pi$.   
\end{prop}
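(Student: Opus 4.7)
The plan is to reduce both implications to the explicit $\fsl_2(\R)$-calculation of Example~\ref{ex:sl2} via the $\fsl_2$-subalgebras associated to non-compact roots, and to use the Guichardet--Wigner quasimorphism to dispose of the non-nilpotent case.

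For the backward implication ($t \geq 2\pi \Rightarrow \exp(tz) \notin \comp(S_W)$), I fix a positive non-compact root $\alpha$ (in the tube-type setting, any long root arising from a strongly orthogonal system $\{\gamma_1,\ldots,\gamma_r\}$ will do) and form the real $\fsl_2(\R)$-subalgebra $\fh_\alpha \subeq \g$ generated by suitable root vectors $e_\alpha, f_\alpha$, together with its simply connected Lie group $\tilde H_\alpha$. A normalization arranges that the component of $z$ along $\fh_\alpha$ (modulo $\fz_\g(\fh_\alpha)$) equals the analogous $z$-element $z_\alpha \in \fh_\alpha$. The induced homomorphism $\phi \: \tilde H_\alpha \to G$ respects the invariant cones, so Example~\ref{ex:sl2} produces, for $t \geq 2\pi$, an unbounded subset $\phi(\exp(\R_+ e_\alpha)) \subeq [e,\exp(tz)]_{\preceq}$ inside $G$, hence $\exp(tz) \notin \comp(S_W)$.

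For the forward implication ($0 < t < 2\pi \Rightarrow \exp(tz) \in \comp(S_W)$), I argue by contradiction. If $\exp(tz) \notin \comp(S_W)$, Lemma~\ref{lem:connected}(e) produces $0 \neq w \in W$ with $\exp(sw) \preceq_{S_W} \exp(tz)$ for all $s \geq 0$. If $w$ is not nilpotent, the Guichardet--Wigner quasimorphism $f_{GW}$ from Subsection~\ref{subsec:GW} leads quickly to a contradiction: its monotonicity (Proposition~\ref{prop:tt-fgw}(a)) yields $f_{GW}(\exp(sw)) \leq f_{GW}(\exp(tz))$, and Proposition~\ref{prop:fgw2}(b) together with the normalization $\Spec(\ad z) = \{0,\pm i\}$ gives $f_{GW}(\exp(tz)) = t$; on the other hand Proposition~\ref{prop:tt-fgw}(b) gives $f_{GW}(\exp(sw)) \geq sC$ with some $C > 0$, and these two estimates are incompatible for large $s$. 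If $w$ is nilpotent, I use the structure theory of tube-type hermitian Lie algebras to $\Ad(G)$-conjugate $w$ into $\sum c_j e_{\gamma_j}$ with $c_j \geq 0$, and then pass to a single $\fsl_2(\gamma_j)$-subgroup with $c_j > 0$. Applying Example~\ref{ex:sl2} inside that subgroup, the containment $\exp(\R_+ e_{\gamma_j}) \preceq_{S_W} \exp(tz)$ forces $t \geq 2\pi$, contradicting $t < 2\pi$.

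The main obstacle is the careful bookkeeping in the $\fsl_2$-reductions, particularly checking that after all scaling, the $2\pi$ threshold of Example~\ref{ex:sl2} matches precisely the $2\pi$ threshold in $G$. Concretely this requires, for each strongly orthogonal $\gamma_j$, that the restriction of $z$ to the $\fsl_2(\gamma_j)$-factor equals the corresponding $z$-element up to the canonical scaling dictated by $i\gamma_j(z) = 1$. This is a standard structural feature of tube-type hermitian Lie algebras via Jordan frames, but demands attention to factors; an alternative that bypasses it is to carry out the nilpotent case directly on the Jordan algebra $V \cong \g_1$ by computing $\exp(tz).0 = \tan(t/2)\cdot \1 \in V$ on the big cell of $\tilde M$ (using Corollary~\ref{cor:ord-emb} and Lemma~\ref{lem:GM-mon}) and comparing in the symmetric cone order, as in the alternative argument of Example~\ref{ex:sl2}.
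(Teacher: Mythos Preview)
Your forward implication ($0<t<2\pi \Rightarrow \exp(tz)\in\comp(S_W)$) follows the paper's strategy: assume $\exp(tz)\notin\comp(S_W)$, invoke Lemma~\ref{lem:connected}(e) to produce $0\neq w\in W$ with $\exp(\R_+ w)\subeq[e,\exp(tz)]$, dispose of the non-nilpotent case via the monotone $f_{GW}$, and reduce the nilpotent case to $\fsl_2$. The paper carries out that last step precisely via your ``alternative'': it acts on the causal flag manifold $\tilde M=G/P^-_e$, uses the order embedding $\g_1\hookrightarrow\tilde M$ (Corollary~\ref{cor:ord-emb}) together with Lemma~\ref{lem:GM-mon}, and derives a contradiction with the pointedness of~$C_+$. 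Your primary formulation (``pass to a single $\fsl_2(\gamma_j)$-subgroup and apply Example~\ref{ex:sl2}'') has a gap: the relation $\exp(\R_+ e_{\gamma_j})\preceq_{S_W}\exp(tz)$ is an order relation in $G$, not in the subgroup, and Example~\ref{ex:sl2} only draws conclusions from order relations \emph{inside} $\tilde\SL_2(\R)$. You have not said how to transfer the $G$-relation to the subgroup; the flag-manifold argument is exactly the device that does this.

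Your backward implication ($t\geq 2\pi\Rightarrow\exp(tz)\notin\comp(S_W)$) is different from the paper's and, as stated, misreads Example~\ref{ex:sl2}. That example proves that \emph{if} $\exp(\R_+ w)\subeq[e,\exp(tz)]$ in $\tilde\SL_2(\R)$ \emph{then} $t\geq 2\pi$; it does not establish the converse inclusion, which is what you would need to exhibit an unbounded subset of $[e,\exp(tz)]$ for $t\geq 2\pi$. The paper instead argues without any $\fsl_2$-reduction: for $t>0$ one has $tz\in W^\circ$, so $\exp(tz)\in S_W^\circ$ by Lemma~\ref{lem:connected}(f); if in addition $\exp(tz)\in\comp(S_W)$, then Lemma~\ref{lem:connected}(c) gives $\exp(tz)\in\comp(S_W)^\circ$, whence $\exp(tz)\in G^{se}(0)$ by Lemma~\ref{lem:connected}(g), forcing $t<2\pi$. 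This is both shorter and free of the normalization bookkeeping you flag as the main obstacle.
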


\begin{prf} If $\exp(tz) \in \comp(S_W)$, then
  $\exp(sz) \in \comp(S_W)^\circ$ for $0 < s < t$
  by Lemma~\ref{lem:connected}(c).  Lemma~\ref{lem:connected}(g)
  now implies that $\exp(sz) \in G^{se}(0)$ for $0 < s < t$, and hence that
  $t \leq 2\pi$ (Theorem~\ref{thm:compemb}). 

  It remains to show that, if $\exp(tz) \not\in \comp(S_W)$,
  then $t \geq 2\pi$. With Lemma~\ref{lem:connected}(e), we
  find a non-zero $w \in W$ with $\exp(\R_+ w) \subeq [e, \exp(tz)]$.
  Let $f_{GW}$ be the Guichardet--Wigner quasimorphism  corresponding
  to the Cartan decomposition $\g = \fk \oplus \fp$.
  This function is $W$-monotone 
  by Proposition~\ref{prop:tt-fgw}(a), so that 
  \[ f_{GW}(\exp sw) \leq f_{GW}(\exp tz) = t
    \quad \mbox{ for } \quad s \in \R_+,\]
  and this implies that $w$ is nilpotent (Proposition~\ref{prop:tt-fgw}(b)).

Now \cite[Prop.~III.4]{HNO94} implies the existence
  of a subalgebra $\fs \cong \fsl_2(\R)$, $r = \rank_\R(\g)$,
  such that $w \in \fs$, the subalgebra $\fs$ is invariant
  under the Cartan involution, and $\fa \subeq \fs$.
  It follows in particular that $\fs$ is also adapted to the
  $3$-grading, which is defined by an element $h\in \fa$.
  In addition, \cite[Thm.~II.10]{HNO94} yields a
  subalgebra $\fs_0 \subeq \fs$, also containing $w$,
  which is also adapted to the Cartan involution and the $3$-grading.

Then $W_\fs := W \cap \fs$ is a generating invariant cone in $\fs$.
We write $\fs_1, \ldots, \fs_r$ for the simple ideals of $\fs$
which are also adapted to the $3$-grading. 
  As $\g$ is assumed to be of tube type, we even have
    $\R z = \fz(\fk) \subeq \fs$ (\cite[Prop.~5.2]{MNO23}). Then
  \[ z = (z_1, \ldots, z_r) \quad \mbox{ with } \quad
    \fz(\fk \cap \fs_j) = \R z_j \quad \mbox{ and } \quad
    z_j \in W_{\fs_j}.\]
  To verify that 
  $\exp(tz).P^- \in \exp(\g_1)$
for $|t| < \pi$,  it suffices to verify
  the corresponding assertion for $z_j \in \fs_j$,
  but in this case it follows from Example~\ref{ex:sl2}.

  Suppose that $t_0 := \frac{1}{2} t < \pi$.
  Then $\exp(\pm t_0 z).P^- = \exp(x_{\pm}).P^-$ for
  $x_\pm \in \g_1$ (Example~\ref{ex:sl2}). 
  Further, $\exp(\R_+ w) \exp(-t_0 z) \prec_S \exp(t_0 z)$ implies with
Corollary~\ref{cor:ord-emb} that
  \[ \exp(\R_+ w) \exp(-t_0 z).P^-_e
    = \exp(\R_+ w + x_-).P^-_e \subeq 
    \exp(x_+ - C_+).P^-_e.\]
  This leads to
  \[ \R_+ w + x_- \subeq x_+ - C_+,\]
  which contradicts the pointedness of the cone $C_+$. 
  We conclude that $t \geq 2\pi$.
\end{prf}

\begin{thm}
  \mlabel{thm:4.x}
  If $\g$ is simple hermitian of tube type and
  $W = W_\g^{\rm max}$, then 
  \[  \comp(S_W)^\circ  = G^{se}(0) = S_W^\circ \cap \exp(2\pi z)
    (S_W^\circ )^{-1}.\] 
\end{thm}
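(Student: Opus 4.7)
My plan is to close the chain of inclusions
\[ \comp(S_W)^\circ \subeq G^{se}(0) \subeq S_W^\circ \cap \exp(2\pi z)(S_W^\circ)^{-1}, \]
whose two parts are already supplied by Lemma~\ref{lem:connected}(g) and Lemma~\ref{lem:component-incl}. It therefore suffices to establish the remaining inclusion
\[ S_W^\circ \cap \exp(2\pi z)(S_W^\circ)^{-1} \subeq \comp(S_W)^\circ. \]
Pick $g$ in the left-hand set; since $g \in S_W^\circ$, Lemma~\ref{lem:connected}(c) reduces the claim to $g \in \comp(S_W)$, i.e.\ that the order interval $[e,g]_{\preceq} = S_W \cap g S_W^{-1}$ is compact.

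Arguing by contradiction, assume $[e,g]_{\preceq}$ is non-compact. Lemma~\ref{lem:connected}(e) then yields a non-zero $w \in W$ with $\exp(\R_+ w) \subeq g S_W^{-1}$, i.e.\ $\exp(sw) \preceq g$ for every $s \geq 0$. The strict condition $g \prec \exp(2\pi z)$ says $g^{-1}\exp(2\pi z) \in S_W^\circ$, and since $S_W^\circ$ is open and $t \mapsto g^{-1}\exp(tz)$ is continuous, there exists $t' < 2\pi$ with $g^{-1}\exp(t'z) \in S_W^\circ$, i.e.\ $g \preceq \exp(t'z)$. Concatenating gives $\exp(\R_+ w) \subeq [e,\exp(t'z)]_{\preceq}$, and by Proposition~\ref{prop:4.14} the latter interval is compact.

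Monotonicity of the Guichardet--Wigner quasimorphism (Proposition~\ref{prop:tt-fgw}(a)) together with $f_{GW}|_K = p_Z$ from~\eqref{eq:qh-hom} now yields $f_{GW}(\exp(sw)) \leq f_{GW}(\exp(t'z)) = t'$ for all $s \geq 0$. If $w$ were not nilpotent, Proposition~\ref{prop:tt-fgw}(b) would force $f_{GW}(\exp(sw)) \geq Cs$ for some $C > 0$, contradicting this bound; hence $w$ is nilpotent. Because $\g$ is simple we have $\fz(\g) = \{0\}$, so $\ad w \neq 0$, and $\Ad(\exp(sw)) = e^{s\,\ad w}$ is then a non-trivial unipotent polynomial one-parameter family in $\Aut(\g)$, unbounded as $s \to \infty$. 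The kernel of $\Ad \: G \to \Aut(\g)$ is discrete, so $\exp(\R_+ w)$ is also unbounded in $G$, contradicting its containment in the compact set $[e,\exp(t'z)]_{\preceq}$.

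The main obstacle is the intermediate step of producing $t' < 2\pi$ with $g \preceq \exp(t'z)$: this is precisely where the strictness built into $g \prec \exp(2\pi z)$ is used, and it is what converts the strict relation into one to which the already-established compactness of $[e,\exp(tz)]_{\preceq}$ for $t<2\pi$ (Proposition~\ref{prop:4.14}) can be applied. The remaining nilpotent-versus-unbounded contradiction is routine, relying only on standard properties of semisimple Lie algebras and of unipotent one-parameter subgroups.
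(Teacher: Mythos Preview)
Your proof is correct and follows the same overall strategy as the paper: both use Lemma~\ref{lem:connected}(g) and Lemma~\ref{lem:component-incl} for two of the three inclusions, and both exploit the openness of $(S_W^\circ)^{-1}$ around $\exp(2\pi z)$ to push $g$ below $\exp(t'z)$ for some $t'<2\pi$, then invoke Proposition~\ref{prop:4.14}.

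Where you diverge is in the final step, which you over-engineer. Once you have $g \preceq_{S_W} \exp(t'z)$, the order interval $[e,g]_{\preceq} = S_W \cap g S_W^{-1}$ is a \emph{closed} subset of the compact set $[e,\exp(t'z)]_{\preceq}$ (since $g S_W^{-1} \subeq \exp(t'z) S_W^{-1}$), hence is itself compact; this is exactly what the paper does. Your contradiction route via Lemma~\ref{lem:connected}(e), the nilpotency deduction through $f_{GW}$, and the unboundedness of unipotent one-parameter groups is valid but entirely unnecessary. (Incidentally, the remark that ``the kernel of $\Ad$ is discrete'' is a red herring for showing $\exp(\R_+ w)$ unbounded; what you actually use is just continuity of $\Ad$, so that a relatively compact set maps to a relatively compact set.)
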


\begin{prf} In view of
  Lemma~\ref{lem:connected}(g) and   Lemma~\ref{lem:component-incl},
  it suffices to show that
  any $s \in S_W^\circ \cap \exp(2\pi z) (S_W^\circ)^{-1}$
  is contained in $\comp(S_W)^\circ$. 
  Pick an $\eps > 0$ with
  $s \in \exp((2\pi - \eps)z)(S_W^\circ)^{-1}$ 
  and recall from Proposition~\ref{prop:4.14} that 
$\exp((2\pi - \eps)z) \in \comp(S_W)$. 
  Then the assertion follows from
  \[ s \in \comp(S_W) \cap S_W^\circ 
    = \comp(S_W)^\circ \]
  (Lemma~\ref{lem:connected}(c)).   
\end{prf}

\begin{cor} \mlabel{cor:globhyp1} If $\g$ is simple hermitian of tube type, then
  $G^{se}(0)$ is globally hyperbolic.
\end{cor}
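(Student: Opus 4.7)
The strategy is to exploit Theorem~\ref{thm:4.x}, which exhibits $G^{se}(0)$ as the open order interval $S_W^\circ \cap \exp(2\pi z)(S_W^\circ)^{-1}$ in the biinvariantly ordered group $(G,\preceq_{S_W})$, along with its identification as $\comp(S_W)^\circ$. By Proposition~\ref{prop:3.11} the map $\exp\:\g^{se}(0)\to G^{se}(0)$ is a diffeomorphism for any $G$ with Lie algebra $\g$, so I may assume $G$ is simply connected. In the tube-type case $W = W_\g^{\rm max}$ is then global in $G$, the closed subsemigroup $S_W$ is pointed with $S_W\cap S_W^{-1}=\{e\}$, and the closed biinvariant partial order $\preceq_{S_W}$ generates precisely the causal structure coming from the cone field $(g.W)_{g\in G}$.

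Global hyperbolicity then reduces to two standard items: antisymmetry of the induced causal relation on $G^{se}(0)$ (no closed causal curves), and compactness of all causal diamonds in $G^{se}(0)$. Antisymmetry is immediate from that of $\preceq_{S_W}$. For the diamond condition, I would take $p,q\in G^{se}(0)$ with $p\preceq_{S_W}q$ and observe that $G^{se}(0)$ is order-convex in $G$ as an open order interval; hence $J^+(p)\cap J^-(q)$ inside $G^{se}(0)$ coincides with $[p,q]_{\preceq} = pS_W\cap qS_W^{-1} = p\cdot[e,p^{-1}q]_{\preceq}$, reducing the task to showing $p^{-1}q\in\comp(S_W)$.

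The key estimate would proceed as follows. From $q\in G^{se}(0)\subeq \exp(2\pi z)(S_W^\circ)^{-1}$ one has $q\prec_{S_W}\exp(2\pi z)$; combined with $p\preceq q$ and the centrality of $\exp(2\pi z)$, this yields
\[ p^{-1}q \prec_{S_W} p^{-1}\exp(2\pi z) = \exp(2\pi z)p^{-1}. \]
The identity $G^{se}(0)^{-1}\exp(2\pi z) = G^{se}(0)$ recorded in Section~\ref{sec:4} places $\exp(2\pi z)p^{-1}$ inside $G^{se}(0) = \comp(S_W)^\circ$, so $[e,\exp(2\pi z)p^{-1}]_{\preceq}$ is compact. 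A short semigroup calculation shows that $a\preceq_{S_W}b$ implies $[e,a]_{\preceq}\subeq [e,b]_{\preceq}$ (since $a^{-1}b\in S_W$ forces $aS_W^{-1}\subeq bS_W^{-1}$); therefore $[e,p^{-1}q]_{\preceq}$ is closed in $G$ and contained in a compact interval, hence compact. Translating by $p$ finishes the argument.

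The main obstacle will not be this inclusion chain but rather the bookkeeping needed to match the abstract order-theoretic diamonds with the geometric causal diamonds: one needs order-convexity of $G^{se}(0)$ in $G$ (delivered by Theorem~\ref{thm:4.x}), so that diamonds computed in the ambient group restrict correctly to the submanifold, and one needs $W$ to be global in $G$ (delivered by the tube-type hypothesis together with simple connectedness), so that $\preceq_{S_W}$ genuinely encodes the biinvariant causal structure inherited by $G^{se}(0)$.
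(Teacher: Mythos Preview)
Your proposal is correct and follows essentially the same strategy as the paper: use Theorem~\ref{thm:4.x} to identify $G^{se}(0)$ with $\comp(S_W)^\circ$, establish order-convexity so that intrinsic causal diamonds agree with ambient $S_W$-order intervals, and then bound these by a compact interval in $\comp(S_W)$. The paper's compactness step is slightly more direct than yours: since $q\in G^{se}(0)=\comp(S_W)^\circ$ already gives $[e,q]_{\preceq}$ compact, and $e\prec_{S_W} p$ (as $p\in S_W^\circ$) yields $[p,q]_{\preceq}\subeq [e,q]_{\preceq}$, there is no need to pass through $\exp(2\pi z)p^{-1}$; your detour via the open-interval description and the identity $G^{se}(0)^{-1}\exp(2\pi z)=G^{se}(0)$ works but is unnecessary once $\comp(S_W)^\circ=G^{se}(0)$ is in hand.
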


\begin{prf} For $a, b \in G^{se}(0) = \comp(S_W)^\circ$ (Theorem~\ref{thm:4.x}),
  we have
  \[ [a,b] \subeq S_W^\circ \cap \comp(S_W)
    \ {\buildrel {\ref{lem:connected}(c)}\over =}\ \comp(S_W)^\circ = G^{se}(0).\] 
  This implies in particular that the order intervals for the causal
  order in $G^{se}(0)$ coincide with the order intervals specified
  by the $S_W$-order on $\tilde G$, and these subsets are compact
  by Theorem~\ref{thm:4.x}. 
\end{prf}

\subsection{The projection to $K$}

In this subsection we verify a result on the
  Guichardet--Wigner quasimorphism $f_{GW}$
  (Proposition~\ref{prop:timefun})
  that will be used in the following section. Here the key point
is that $f_{GW}$ behaves nicely on $G^{se}(0)$.

From Proposition~\ref{prop:2.11} we recall that the map
\[ \phi \: \fp\times K^{se} \to G^{se}, \quad
  (x,k) \mapsto \exp (x) k \exp(-x) \]
is a diffeomorphism. Accordingly, we obtain a smooth map
\[ \psi \: G^{se} \to K^{se}, \quad
  \phi(x,k) \mapsto k.\]

\begin{lem} \mlabel{lem:kproj} The map $\psi$ is monotone with respect to the
  biinvariant cone fields defined by $W$ on $G^{se}$
  and $W_\fk := W \cap \fk$ on $K^{se}$,   respectively.
\end{lem}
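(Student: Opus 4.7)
The plan is to verify monotonicity of $\psi$ infinitesimally: fix an arbitrary $g = \phi(x,k) = \exp(x)\,k\,\exp(-x) \in G^{se}$ and a causal tangent vector $X = g\cdot w$ with $w\in W$; it suffices to show that $d\psi(g)(X) \in k\cdot W_\fk$. Parametrizing tangents to $\fp\times K^{se}$ at $(x,k)$ as $(\dot x, kv)$ with $\dot x\in\fp$ and $v\in\fk$, the formula $d\exp_y(y') = \exp(y)\cdot\frac{1-e^{-\ad y}}{\ad y}y'$ together with the product rule and the identity $\Ad(e^x)\cdot\frac{1-e^{-\ad x}}{\ad x} = \frac{e^{\ad x}-1}{\ad x}$ yields after a short calculation
\[
 d\phi(x,k)(\dot x, kv) \;=\; g\cdot \Ad(\exp x)\bigl[\,v + (\Ad(k^{-1})-1)\xi\,\bigr], \qquad \xi := \tfrac{1-e^{-\ad x}}{\ad x}\dot x.
\]
By $\Ad(G)$-invariance of $W$, the causality of $X$ becomes $v + (\Ad(k^{-1})-1)\xi \in W$, while $d\psi(g)(X) = k\cdot v$, so the task reduces to showing $v \in W_\fk$.

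Next I would apply the ``$\theta$-trick.'' Since the Cartan involution $\theta$ fixes $\ft\subeq\fk$ pointwise it preserves $W_\ft^{\max}$, and since $\theta\Ad(G)\theta^{-1}=\Ad(G)$ one obtains $\theta W = \oline{\Ad(G)\,\theta W_\ft^{\max}} = W$; consequently $p_\fk(w') = \tfrac{1}{2}(w'+\theta w') \in W\cap\fk = W_\fk$ for every $w'\in W$. Splitting $\xi = \xi_\fp + \xi_\fk$ along $\g = \fk\oplus\fp$, the closed forms are $\xi_\fp = \frac{\sinh(\ad x)}{\ad x}\dot x \in \fp$ and $\xi_\fk = -\frac{\cosh(\ad x)-1}{\ad x}\dot x \in \fk$, and since $\Ad(k)$ preserves both $\fk$ and $\fp$, applying the $\theta$-trick to $v + (\Ad(k^{-1})-1)\xi \in W$ yields
\[
 v + (\Ad(k^{-1}) - 1)\xi_\fk \;\in\; W_\fk.
\]

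The last and most delicate step is to eliminate the correction $(\Ad(k^{-1})-1)\xi_\fk$ to deduce $v\in W_\fk$. Since $\Ad(k)$ fixes $\fz(\fk)$ pointwise, this correction lies in $[\fk,\fk]$, so the $\fz(\fk)$-component of $v$ already coincides with that of the displayed element and is therefore non-negative on $z$. To upgrade this to the full inclusion I would exploit the identity $\xi_\fk = -\tanh(\ad x/2)\,\xi_\fp$ (coming from $(\cosh X - 1)/\sinh X = \tanh(X/2)$) together with the $\fp$-component of the causality equation, $(\Ad(k^{-1})-1)\xi_\fp = p_\fp(\Ad(e^{-x})w)$, which, using that $\Ad(k^{-1})-1$ is invertible on $\fp$ whenever $k\in K^{se}$ (by Theorem~\ref{thm:compemb}), expresses the correction as an explicit linear functional of $w \in W$. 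Pairing this with the characterization of the hermitian cone $W_\fk$ as the $\Ad(K)$-invariant cone whose $\fz(\fk)$-component dominates the semisimple part allows the correction to be absorbed, yielding $v \in W_\fk$. The hard part will be precisely this final absorption: the $\theta$-trick cleanly delivers $p_\fk(W) \subseteq W_\fk$, but the mismatch between the Cartan splitting $\g = \fk\oplus\fp$ and the splitting of $T_gG^{se}$ induced by $d\phi$ at a non-identity $m = \exp x$ produces a residual correction in $[\fk,\fk]$, and taming it demands the fine structure of $W_\fk$ for simple hermitian $\g$ rather than a generic cone-theoretic bound.
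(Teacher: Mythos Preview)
Your setup is careful and the formula for $d\phi(x,k)$ is correct, as is the reduction to ``$v+(\Ad(k^{-1})-1)\xi\in W\Rightarrow v\in W_\fk$'' and the $\theta$-trick (which is precisely Lemma~\ref{lem:Wk} in the paper).  However, the final ``absorption'' step is a genuine gap: the argument you sketch---expressing the correction $(\Ad(k^{-1})-1)\xi_\fk$ through $\xi_\fp$ and then invoking a rough characterization of $W_\fk$ as ``$\fz(\fk)$-component dominates the semisimple part''---is not carried out, and it is not clear that it can be made to work.  The correction lies in $[\fk,\fk]$ and can point in any direction there, while membership in $W_\fk$ (for instance the cone of positive elements in $\fu_n$) is governed by \emph{all} eigenvalues, not just the trace; a generic bound of the type you suggest does not control this.

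The paper avoids this difficulty entirely by a conjugation trick you missed.  Writing $p=\exp x$ and $c_p(g)=pgp^{-1}$, one has $\psi\circ c_p=\psi$; combined with the $\Ad(G)$-invariance of $W$ this gives
\[
T_{c_p(k)}(\psi)(c_p(k).W)=T_k(\psi\circ c_p)(k.W)=T_k(\psi)(k.W),
\]
so it suffices to compute $T_k\psi$ at points $k\in K^{se}$, i.e.\ at $x=0$.  In your own notation this means $\xi=\dot x\in\fp$, hence $\xi_\fk=0$, and the correction term vanishes outright; your $\theta$-trick then gives $v=p_\fk\bigl(v+(\Ad(k^{-1})-1)\dot x\bigr)\in W_\fk$ immediately.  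Equivalently, the paper observes that the kernel of $T_k\psi$ is the tangent space to the conjugation orbit $\{pkp^{-1}:p\in\exp\fp\}$, which equals $k.(\Ad(k)^{-1}-1)\fp=k.\fp$ since $k\in K^{se}$, so $T_k\psi$ is literally the projection $k.\g\to k.\fk$ along $k.\fp$ and $T_k\psi(k.W)=k.p_\fk(W)=k.W_\fk$ by Lemma~\ref{lem:Wk}.  The moral is that the ``hard part'' you identified is an artefact of computing at a general basepoint; reduce to $x=0$ first and it disappears.
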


\begin{prf} For $p := \exp(x)$, $x \in \fp$, let
  $c_p(g) := pgp^{-1}$. Then
  $\psi \circ c_p = \psi$ and the cone field on $G^{se}$ is invariant
  under all these maps~$c_p$. We therefore
  have
  \begin{align*}
 T_{c_p(k)}(\psi)(c_p(k).W)
&= T_{c_p(k)}(\psi)(pk.W.p^{-1}) 
= T_{c_p(k)}(\psi) T_k(c_p)(k.W) \\
&= T_k(\psi \circ c_p)(k.W) = T_k(\psi)(k.W).
\end{align*}

The tangent map $T_k(\psi)$ maps $T_k(G) = k.\g = k.\fk + k.\fp$ to
$T_k(K) = k.\fk$ and $\psi\res_{K^{se}} = \id_{K^{se}}$, so that
$T_k(\psi)$ is a projection onto $T_k(K)$.
Its kernel is the tangent space of the submanifold
$\bigcup_{p \in \exp\fp} pkp^{-1}$.
As $pkp^{-1} = k(k^{-1}pkp^{-1})$
and the map 
\[x \mapsto  k^{-1} \exp(x) k \exp(-x) = \exp(\Ad(k)^{-1}x) \exp(-x) \]
produces in $0$ the tangent vectors
$\Ad(k)^{-1}x  - x$ for $x \in \fp$. This is the linear
subspace \break $(\Ad(k)^{-1}-\1)\fp = \fp$,
where we use $k \in K^{se}$ for this equality. This implies that
\[ T_k(\psi) \: k.\g \to k.\fk \]
is the linear projection along the subspace $k.\fp$.

We therefore obtain 
\[  T_{c_p(k)}(\psi)(c_p(k).W)
  =T_k(\psi)(k.W) = k.p_\fk(W)\ {\buildrel \ref{lem:Wk}\over =}\ k.W_\fk.\qedhere\]
\end{prf}

\begin{lem} \mlabel{lem:Wk} $p_\fk(W) = W_\fk = W \cap \fk$. 
\end{lem}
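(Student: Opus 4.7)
The plan is to realize the projection $p_\fk$ as averaging with respect to the Cartan involution $\theta$ of $\g$ (fixing $\fk$ pointwise and acting as $-\id$ on $\fp$), so that $p_\fk = \frac{1}{2}(\id + \theta)$. Once I know $\theta$ preserves $W$, convexity of $W$ will force $p_\fk(W) \subeq W$, and since $p_\fk$ lands in $\fk$, this gives $p_\fk(W) \subeq W \cap \fk = W_\fk$. The reverse inclusion $W_\fk \subeq p_\fk(W)$ is immediate because $p_\fk$ restricts to the identity on $\fk$.

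The main step, and the only nontrivial one, is to establish that $\theta(W) = W$. I would argue this as follows: since $\theta$ is an automorphism of $\g$, it normalizes $\Inn(\g)$ (conjugating $\ad x$ to $\ad \theta(x)$), so $\theta(W)$ is again a pointed generating $\Inn(\g)$-invariant closed convex cone in~$\g$. By the uniqueness of the maximal invariant cone in a simple hermitian Lie algebra up to sign, $\theta(W) \in \{W, -W\}$. To rule out the minus sign, I observe that the element $z \in \fz(\fk)$ introduced at the start of Section~\ref{sec:3} lies in $W_\ft^{\mathrm{max}} = W \cap \ft \subeq W$, while $\theta(z) = z$ because $z \in \fk$. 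Thus $z \in W \cap \theta(W)$, and since $W$ is pointed with $z \neq 0$, we cannot have $\theta(W) = -W$. Hence $\theta(W) = W$.

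With $\theta(W) = W$ established, the conclusion is routine: for any $w \in W$,
\[ p_\fk(w) = \tfrac{1}{2}\bigl(w + \theta(w)\bigr) \]
is a convex combination of $w \in W$ and $\theta(w) \in \theta(W) = W$, so $p_\fk(w) \in W$ by convexity. Combined with $p_\fk(w) \in \fk$, this yields $p_\fk(w) \in W \cap \fk = W_\fk$, completing the argument. The $\theta$-invariance of $W$ is the crux; everything else is a one-line convexity argument.
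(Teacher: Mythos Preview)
Your proof is correct and takes a genuinely different route from the paper's. You exploit the Cartan involution $\theta$: since $\theta \in \Aut(\g)$, the image $\theta(W)$ is again a maximal pointed generating invariant cone, hence equals $\pm W$, and the $\theta$-fixed element $z \in \fz(\fk) \cap W$ rules out the minus sign; then $p_\fk = \tfrac{1}{2}(\id + \theta)$ maps $W$ into itself by convexity. This is short and uses only the uniqueness of the maximal cone up to sign together with the basic fact that $\fp$ is the $(-1)$-eigenspace of~$\theta$.

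The paper instead works through the compactly embedded Cartan subalgebra~$\ft$: it uses the factorization $p_\ft = p_\ft \circ p_\fk$ and the convexity result $p_\ft(W) = W_\ft$ to obtain $p_\ft(\Ad(K)p_\fk(w)) \subeq W_\ft$, and then invokes the characterization of $\Ad(K)$-invariant cones in $\fk$ by their trace on $\ft$ (\cite[Prop.~VII.3.5]{Ne99}) to conclude $p_\fk(W) \subeq W_\fk$. That argument sits naturally inside the paper's structural framework (root data, the convexity machinery of \cite{Ne99}) and makes explicit the role of~$\ft$, whereas yours bypasses all of this with a single symmetry observation. Your approach has the advantage of being self-contained and of applying immediately to any $\theta$-invariant closed convex cone, though your particular verification of $\theta$-invariance relies on maximality.
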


\begin{prf} Clearly, $W_\fk \subeq p_\fk(W)$. For the converse,
  we observe that, for $w \in W$, we have $p_\ft(w) = p_\ft(p_\fk(w))$,
  so that 
  \[ p_\ft(\Ad(K)p_\fk(w)) \subeq p_\ft(\Ad(K)w)) \subeq
    p_\ft(W) = W_\ft := W \cap \ft.\]
  Hence
  \[  p_\fk(W) \subeq \bigcap_{k \in K} \Ad(k).p_\ft^{-1}(W_\ft)
    = W_\fk,\]
  where the last equality follows from the fact that
  both sides are invariant cones in $\fk$ with the same intersection
  with $\ft$ (\cite[Prop.~VII.3.5]{Ne99}).
\end{prf}

\begin{prop} \mlabel{prop:timefun} Suppose that $G$ is simply connected.
  Then $f_{GW}\res_{G^{se}(0)}$ is a bounded smooth $W$-monotone function on $G^{se}(0)$.
\end{prop}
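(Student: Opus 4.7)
The plan is to identify $f_{GW}$ with $p_Z \circ \psi$ on $G^{se}(0)$, where $\psi \: G^{se} \to K^{se}$ is the smooth retraction $\phi(x,k) \mapsto k$ attached to the polar diffeomorphism $\phi$ of Proposition \ref{prop:2.11}, and then read off each of the three claimed properties from this factorization. Once the identification is in hand the whole statement reduces to elementary facts about $p_Z$ and $\psi$.

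First I would check that $p_Z \circ \psi$ is a smooth, $G$-conjugation-invariant function on $G^{se}$ that restricts to $p_Z$ on $K^{se}$. Smoothness and the restriction property are immediate from the definition of $\psi$. For $G$-conjugation invariance the decisive input is Lemma \ref{lem:compact-central}: for $k \in K^{se}$, the relation $gkg^{-1} \in K$ forces $g \in K$, so each $G$-conjugacy class meets $K^{se}$ in a single $K$-conjugacy class; consequently $\psi(gxg^{-1})$ is $K$-conjugate to $\psi(x)$ for every $g \in G$ and $x \in G^{se}$, and since $p_Z \: K \to Z(K)_e \cong \R$ is a homomorphism into an abelian group it collapses $K$-conjugates. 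The uniqueness statement in Proposition \ref{prop:fgw2}(c) then gives
\[ f_{GW}(g) = p_Z(\psi(g)) \qquad \text{for all } g \in G^{se}(0). \]

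With this identity the three assertions split cleanly. Smoothness is clear. For $W$-monotonicity, Lemma \ref{lem:kproj} shows that $\psi$ sends the cone field $(g.W)_{g \in G^{se}}$ into $(k.W_\fk)_{k \in K^{se}}$, while $p_Z$ is monotone because its differential $p_{\fz(\fk)} \: \fk \to \fz(\fk) \cong \R$ satisfies $p_{\fz(\fk)}(W_\fk) \subeq \R_+ z$; the latter inclusion comes from averaging any $w \in W_\fk$ over the compact group $\Ad(K)$ and using $W_\fk \cap \fz(\fk) = \R_+ z$, which in turn holds since $z \in W$ lies in $\fz(\fk)$ and $\fz(\fk) = \R z$ in the simple hermitian case. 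Composing the two monotonicity facts gives the monotonicity claim. For boundedness, $\psi(G^{se}(0)) \subeq K^{se}(0) = \exp(\Ad(K)\ft^{se}(0))$, and the $\Ad(K)$-invariance of $p_{\fz(\fk)}$ reduces the image of $p_Z \circ \psi$ to $p_{\fz(\fk)}(\ft^{se}(0))$. Since $\Delta_p^+$ spans $i\ft^*$ for simple hermitian $\g$ (as used already in Lemma \ref{lem:l1}), the basic alcove $\ft^{se}(0) = \{x \in \ft \: 0 < i\alpha(x) < 2\pi \text{ for all } \alpha \in \Delta_p^+\}$ is a bounded open polytope, so its image in $\fz(\fk) \cong \R$ is bounded.

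The main obstacle is the identification $f_{GW} = p_Z \circ \psi$ on $G^{se}(0)$; the rest is bookkeeping. Within that step the delicate point is $G$-conjugation invariance of $p_Z \circ \psi$: by construction $\psi$ is only manifestly $K$-equivariant, and promoting this to the full group genuinely needs the stable ellipticity hypothesis via Lemma \ref{lem:compact-central}, without which distinct $K$-conjugacy classes in $K^{se}$ could a priori merge under outer conjugation and the uniqueness appeal to Proposition \ref{prop:fgw2}(c) would collapse.
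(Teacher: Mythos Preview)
Your proposal is correct and follows essentially the same approach as the paper: the factorization $f_{GW} = p_Z \circ \psi$ on $G^{se}(0)$, followed by Lemma~\ref{lem:kproj} for monotonicity of $\psi$ and the observation that $p_{\fz(\fk)}(W_\fk) \subeq W \cap \fz(\fk)$ for monotonicity of $p_Z$.

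One small remark on efficiency: you work rather hard to show that $p_Z \circ \psi$ is $G$-conjugation invariant in order to invoke the uniqueness clause of Proposition~\ref{prop:fgw2}(c), and you flag this as the ``delicate point.'' The paper sidesteps this entirely: since $f_{GW}$ is \emph{already known} to be conjugation invariant (as a homogeneous quasimorphism) and restricts to $p_Z$ on $K$, and since $\psi(g)$ is by construction $G$-conjugate to $g$, one has $f_{GW}(g) = f_{GW}(\psi(g)) = p_Z(\psi(g))$ immediately. So the identification is a one-line observation rather than the main obstacle. Your argument via Lemma~\ref{lem:compact-central} is correct, just unnecessary here. On the other hand, you supply an explicit boundedness argument (via the boundedness of the basic alcove $\ft^{se}(0)$), which the paper's proof leaves implicit.
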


\begin{prf}   As $f := f_{GW}\res_{G^{se}(0)}$ is conjugation invariant
  and $f\res_K = p_Z$,   we have $f = p_Z \circ \psi$. The function
\[ \psi \:  G^{se}(0) \to K^{se}(0) \] 
  is monotone by Lemma~\ref{lem:kproj}, and $p_Z \: K \to \fz(\fk)$
  is a group homomorphism whose differential
  $p_{\fz(\fk)}$ maps $W_\fk = p_{\fz}(W)$ to $W \cap \fz(\fk)$. 
  Therefore $p_Z$ is also monotone with respect to the causal structure
  defined by $W_\fk$ on~$K$. In view of Lemma~\ref{lem:Wk},
  this implies the monotonicity of~$f$.
\end{prf}

\section{The global hyperbolicity of $G^{se}(0)$
in the general case}
\mlabel{sec:6}

In this section we show that $G^{se}(0)$ is also globally hyperbolic
if the hermitian Lie algebra $\g$ is not of tube type.
Then the subsemigroup $S_W \subeq G$ coincides with the
whole group, so that we cannot refer to the global
order structure on $G$ corresponding to the biinvariant
cone field $(g.W)_{g \in G}$. 


Consider an invariant cone $W$ in a Lie-algebra $\mathfrak{g}$.
The invariant cone defines a bi-invariant causal structure on the Lie-group $G$ by $W_g:=g\cdot W$ for the canonical left action of $G$ on $T(G)$.

We call a smooth curve $\gamma \: I \to G$ \textit{timelike} 
if $\gamma'(t)\in W_{\gamma(t)}^\circ$ for $t \in I$ and
\textit{causal} if $\gamma'(t)\in W_{\gamma(t)}$
for every $t \in I$.

The \textit{chronological future and past} of $g\in G$ can be defined as
\begin{align*}
I^+(g):=\{h\in G|\text{ there exists a timelike curve in $G$ from $g$ to $h$}\}\\
I^-(g):=\{h\in G|\text{ there exists a timelike curve in $G$ from $h$ to $g$}\}
\end{align*}
and the \textit{causal future and past} as
\begin{align*}
J^+(g):=\{h\in G|\text{ there exists a causal curve in $G$ from $g$ to $h$}\}\\
J^-(g):=\{h\in G|\text{ there exists a causal curve in $G$ from $h$ to $g$}\}.
\end{align*}

Note that, due to the invariance of the causal structure,
the future and past sets are given by $I^{\pm}(g)=g I^{\pm}(e)=I^{\pm}(e)g$ and $J^{\pm}(g)=g J^{\pm}(e)=J^{\pm}(e)g$.
Moreover $I^-(e)=(I^+(e))^{-1}$ and $J^-(e)=(J^+(e))^{-1}$, i.e., the causal structure is entirely determined by the chronological/causal future of $e$.

In the following we consider the causal/chronological future and past of the causal structure restricted to $G^{se}(0)$, i.e.,  we restrict to causal curves that are contained in $G^{se}(0)$.
Recall that $G^{se}(0)$ is globally hyperbolic if it does not contain causal loops and the order intervals $J^+(g)\cap J^-(h)$ are compact for all $g,h\in G^{se}(0)$. 
To show global hyperbolicity, it makes no difference if we consider the order intervals induced by $J^{\pm}(g)$ or the ones induced by $\overline{J^{\pm}(g)}$, since the compactness of the two is known to be equivalent
(assuming that $W$ has non-empty interior,
see \cite[Proposition 2.20]{Mi19}.
The global hyperbolicity of a closed causal structure has strong consequences on the topology of the underlying manifold and the causal properties of the causal structure.
For instance it implies the existence of a smooth splitting $G^{se}(0)\cong \mathbb{R}\times \Sigma$, where $\Sigma$ a Cauchy hyperspace, i.e., a hyperspace in $G^{se}(0)$ that is intersected in a unique point by every 
inextendible  causal curve (contained in $G^{se}(0)$), see e.g. \cite{BS18, Mi19} for further discussion on the properties of globally hyperbolic causal structures.

\begin{thm}\label{thm1} 
  For the bi-invariant causal structure on $G$, defined by $W_g=g\cdot W$
  for the canonical left action of $G$ on $T(G)$,
  the open subset $G^{se}(0)\subset G$ is globally hyperbolic.
\end{thm}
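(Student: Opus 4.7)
The tube-type case is already settled by Corollary~\ref{cor:globhyp1} through the semigroup $S_W$, so the essential new content lies in the non-tube case, where $S_W = G$ and the semigroup methods of Section~\ref{sec:4} no longer apply. The plan is to verify the two defining properties of global hyperbolicity separately, using the Guichardet--Wigner quasimorphism as a strict time function and the Properness Theorem to control the polar decomposition.

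For the absence of causal loops, I would sharpen Proposition~\ref{prop:timefun} to show that $f_{GW}\res_{G^{se}(0)}$ is strictly increasing along every non-constant causal curve; this at once rules out causal loops, along which $f_{GW}$ would have to be constant. Since both $f_{GW}$ and the cone field $g \mapsto g \cdot W$ are conjugation invariant, the polar decomposition of Proposition~\ref{prop:2.11} reduces the verification to points $k \in K^{se}(0)$. There, the description of $T_k \psi$ in the proof of Lemma~\ref{lem:kproj} yields
\[ df_{GW}(k)(k \cdot x) = dp_Z(k)(k \cdot x_\fk) = P(x), \]
where $x = x_\fk + x_\fp$ is the Cartan decomposition of $x \in \g$ and $P := p_{\fz(\fk)} \circ p_\fk \: \g \to \fz(\fk) = \R z$. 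Because $\oline{\Ad(K)}$ is compact in $\Aut(\g)$, averaging identifies $P$ with the projection onto $\g^{\Ad(K)} = \fz(\fk)$; combined with the $\Ad(K)$-invariance of $W$ and a strictly $W$-positive functional from the interior of the dual cone $W^*$, this forces $P(w) = 0 \Rightarrow w = 0$ for every $w \in W$. Hence $f_{GW}$ is a smooth strict temporal function on $G^{se}(0)$.

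For the compactness of the causal diamonds $J^+(a) \cap J^-(b)$, the monotone projection $\psi \: G^{se} \to K^{se}$ of Lemma~\ref{lem:kproj} first confines $\psi(J^+(a) \cap J^-(b))$ to the $W_\fk$-order interval in $K^{se}(0)$ between $\psi(a)$ and $\psi(b)$. Passing to the simply connected cover (via Proposition~\ref{prop:3.11}), $K$ splits as $\R \times K_{\mathrm{ss}}$ with $K_{\mathrm{ss}}$ compact semisimple and $p_Z$ is the proper projection onto the $\R$-factor, so this interval is contained in a compact subset $C \subeq K^{se}(0)$. The Properness Theorem~\ref{properness:theorem} then shows that $\bigcup_{p \in \exp \fp} pCp^{-1}$ is closed in $G$, and the remaining task is to bound the $\fp$-coordinate of the polar decomposition of points of the diamond. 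Adapting the approach of \cite{He22}, I would use that any $g = \exp(y) k \exp(-y)$ in the diamond sits on a causal curve of total $f_{GW}$-variation at most $f_{GW}(b) - f_{GW}(a)$, and combine this with an $\Ad(K)$-averaging estimate along that curve to produce an a priori bound on $\|y\|$. This last step, extracting a nonlinear $\fp$-bound from $f_{GW}$-boundedness beyond the closedness furnished by the Properness Theorem, is the principal obstacle I anticipate, and it is where the strict temporal function from the second paragraph, together with the root-space geometry of $\ad h$ for $h$ in a maximal abelian $\fa \subeq \fp$, must carry the day.
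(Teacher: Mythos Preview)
Your strategy for ruling out causal loops via a strict time function is sound, and the paper does something similar---though with a different function $\tau$ built from $\sum_{\alpha\in\Delta_p^+}\bigl(\ln(i\alpha(x))-\ln(2\pi-i\alpha(x))\bigr)$ rather than $f_{GW}$. Your argument that $p_{\fz(\fk)}$ is strictly positive on $W\setminus\{0\}$ is correct (it is an interior functional of $W^*$), so $f_{GW}$ would also serve for causal simplicity.

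The genuine gap is in the compactness step. You propose to bound the $\fp$-coordinate $\|y\|$ of $g=\exp(y)k\exp(-y)$ using that the total $f_{GW}$-variation along a causal curve in the diamond is at most $f_{GW}(b)-f_{GW}(a)$. But Proposition~\ref{prop:timefun} already tells you that $f_{GW}$ is \emph{bounded} on all of $G^{se}(0)$, so bounded $f_{GW}$-variation carries no information preventing $\|y\|\to\infty$. The Properness Theorem gives closedness of $\bigcup_{p}pCp^{-1}$ in $G$, not compactness, and a sequence in the diamond could in principle run off to infinity in the $\fp$-direction while staying in this closed set with $f_{GW}$ trapped in a fixed interval. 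An ``$\Ad(K)$-averaging estimate'' will not rescue this: the conjugation $\exp(y)k\exp(-y)$ moves $k$ arbitrarily far in $G$ without changing any conjugation-invariant quantity.

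The paper circumvents this entirely by working with a time function $\tau$ that \emph{diverges} at $\partial G^{se}(0)$, and---crucially---by first proving a ``straightening'' lemma: if $h\in\overline{J^+(g)}\cap G^{se}(0)$ then $hg^{-1}\in G^{se}(0)$, so $h=\exp(x)g$ for some $x\in\g^{se}(0)\subeq W$, and the one-parameter segment $s\mapsto\exp(sx)g$ stays in $G^{se}(0)$. Once diamond elements are represented as $g_k=\exp(t_kx_k)g$ with $x_k$ on the unit sphere in $W$, the divergence of $\tau$ along $t\mapsto\exp(tx)g$ as the curve approaches $\partial G^{se}(0)$ forces $\limsup t_k$ strictly below the exit time, and compactness follows by extracting convergent subsequences of $(x_k)$ and $(t_k)$. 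The straightening step itself requires a delicate homotopy argument (and, for nilpotent directions in $W$, a reduction to $\fsl_2(\R)$); this is the content your proposal is missing, and it is not recoverable from the polar decomposition alone.
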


\subsection*{Proof of the theorem}

Let $\Delta_p^+$ be as before.
Define 
$$\tau\colon G^{se}(0)\rightarrow \mathbb{R}$$
as the unique conjugation invariant function that is defined on
$\exp(\mathfrak{t}^{se}(0))$ by 
$$ \tau(\exp x) := \sum\limits_{\alpha\in \Delta_p^+ }\ln \underbrace{(i\alpha(x))}_{> 0}-\ln(\underbrace{2\pi-i\alpha(x)}_{> 0}).$$

\begin{lem}\label{lem1}
The function $\tau$ is strictly increasing along smooth causal curves.
\end{lem}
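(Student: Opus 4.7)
The plan is to reduce, via the conjugation invariance of $\tau$, to an infinitesimal calculation on the compactly embedded Cartan subalgebra $\ft$ and to exploit the strict positivity of $\varphi'(s)=\frac{1}{s}+\frac{1}{2\pi-s}$ on $(0,2\pi)$, where $\varphi(s):=\ln s-\ln(2\pi-s)$. First one verifies that $\tau$ is well-defined and smooth on $G^{se}(0)$: the displayed formula on $\exp(\ft^{se}(0))$ is invariant under $\cW_\fk$, since $\cW_\fk$ fixes $z\in\fz(\fk)$ pointwise and hence permutes $\Delta_p^+=\{\alpha\in\Delta_p\mid i\alpha(z)>0\}$. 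Combined with Theorem~\ref{thm:conncomp}, which identifies every element of $G^{se}(0)$ with a unique $\cW_\fk$-orbit in $\exp(\ft^{se}(0))$, this yields a unique smooth conjugation-invariant extension.

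By conjugation invariance it then suffices to compute $d\tau_g$ at $g=\exp x$ with $x\in\ft^{se}(0)$. Writing a tangent vector as $g\cdot w$ (left trivialisation, $w\in\g$) and expanding $\Ad(g\exp(tw))=\Ad(g)e^{t\ad w}$ to first order in $t$, the simple eigenvalue $e^{\alpha(x)}$ of $\Ad(g)$ on $\g_\C^\alpha$ is perturbed by the diagonal part of $\ad w$ on $\g_\C^\alpha$. Since non-$\ft$ root components of $w$ map $\g_\C^\alpha$ into other root spaces, only the $\ft$-component $p_\ft(w)$ contributes, yielding
\[d\tau_g(g\cdot w)=\sum_{\alpha\in\Delta_p^+}\varphi'(i\alpha(x))\,i\alpha(p_\ft(w)),\]
where $p_\ft\colon\g\to\ft$ is the projection along the root-space decomposition. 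By the same cone-intersection argument that underlies Lemma~\ref{lem:Wk} one has $p_\ft(W)=W_\ft^{\rm max}$, so each summand is nonnegative and $d\tau_g(g\cdot w)\geq 0$ on causal directions.

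The main obstacle is upgrading this to \emph{strict} monotonicity along non-constant causal curves. Since $\Delta_p^+$ spans $i\ft^*$ for simple hermitian $\g$, the derivative vanishes on $w\in W$ iff $p_\ft(w)=0$, equivalently $p_\fk(w)\in W_\fk\cap(\fk\ominus\ft)$. An averaging argument rules this out: for $w$ in this intersection, $\bar{w}:=\int_K\Ad(k)w\,dk$ lies in $\fz(\fk)\cap[\fk,\fk]=\{0\}$, while linearity of $p_\ft$ gives $0=p_\ft(\bar{w})=\int_K p_\ft(\Ad(k)w)\,dk$, an average of elements of the pointed cone $W_\ft^{\rm max}$, forcing $p_\ft(\Ad(k)w)=0$ for every $k\in K$. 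Hence $\Ad(K)w\subseteq\fk\ominus\ft$, but $\fk\ominus\ft$ is not $\Ad(K)$-stable as soon as $\fk\neq\ft$ (the case $\fk=\ft$, i.e.\ $\g=\fsl_2(\R)$, is trivial since then $p_\ft=\id_\fk$), so $w=0$. Thus $d\tau_g(g\cdot w)>0$ for every $0\neq w\in W$, and integrating along any non-constant smooth causal curve yields the desired strict monotonicity.
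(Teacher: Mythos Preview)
Your overall strategy---reduce by conjugation invariance to $g=\exp x$ with $x\in\ft^{se}(0)$, derive $d\tau_g(g\cdot w)=\sum_{\alpha\in\Delta_p^+}\varphi'(i\alpha(x))\,i\alpha(p_\ft(w))$, and then argue that $p_\ft(w)\neq 0$ for $0\neq w\in W$---is sound and parallel to the paper's approach. The formula is correct (the $\Ad(T)$-invariance of the functional $w\mapsto d\tau_g(g\cdot w)$ kills all root-space components). The gap is in the strict-positivity step. Your averaging argument, once the notational conflation of $w\in W$ with $p_\fk(w)\in W_\fk$ is untangled, only shows $W_\fk\cap(\fk\ominus\ft)=\{0\}$, hence $p_\ft(w)=0\Rightarrow p_\fk(w)=0$, i.e.\ $w\in\fp$. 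You never treat this case: one still needs $W\cap\fp=\{0\}$, which is true because the Cartan involution $\theta=e^{\pi\ad z}$ is inner, so $\theta(W)=W$, while $\theta\res_\fp=-\id$ and $W$ is pointed. Without this, your ``so $w=0$'' is unjustified. A secondary issue: the inference ``$\fk\ominus\ft$ is not $\Ad(K)$-stable, so $v=0$'' is not valid as stated---a non-invariant subspace can still contain nontrivial orbits. The correct reason is that every element of $\fk$ is $\Ad(K)$-conjugate into $\ft$, so $\Ad(K)v\subseteq\fk\ominus\ft$ forces the $\ft$-conjugate of $v$ to lie in $\ft\cap(\fk\ominus\ft)=\{0\}$.

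The paper's proof avoids all of this by invoking the monotone projection $\psi\colon G^{se}(0)\to K^{se}(0)$ of Lemma~\ref{lem:kproj} (together with $p_\fk(W)=W_\fk$, Lemma~\ref{lem:Wk}); after that reduction one only has to check $d\phi_x(y)>0$ for $0\neq y\in W_\ft^{\max}$, which is immediate since $\Delta_p^+$ spans $i\ft^*$ and all coefficients $\varphi'(i\alpha(x))$ are positive. Your direct computation is more self-contained but then has to reprove by hand the two pointedness facts $W\cap\fp=\{0\}$ and $W_\fk\cap(\fk\ominus\ft)=\{0\}$ that the paper's machinery absorbs.
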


\begin{proof}
Using Lemma 4.20 it remains to show that the function
\begin{align*}
  \phi\colon \mathfrak{t}^{se}(0)&\rightarrow \mathbb{R}, \quad
\phi(x) := \sum\limits_{\alpha\in \Delta_p^+ }\ln (i\alpha(x))-\ln(2\pi-i\alpha(x))
\end{align*}
satisfies 
$d\phi(x)(W_{\mathfrak{t}}^{\max})\subset (0,\infty)$. 
One has 
$$d\phi(x)(y)=\sum\limits_{\alpha\in \Delta_p^+ }\left(\frac{1}{i\alpha(x)}+\frac{1}{2\pi-i\alpha(x)}\right)i\alpha(y).$$
The claim follows since for $0 \not=y\in W_{\mathfrak{t}}^{\max}$
there exists $\alpha\in \Delta_p^+$ with $\alpha(y)>0$.
\end{proof}

The following lemma shows that, for $g \in G^{se}(0)$,
any timelike curve of the form $\exp(tx)g$ exits $G^{se}(0)$ in finite time in the future and past.

\begin{lem}\label{lem2}
  For any non-nilpotent $x\in W$
  and $g\in G^{se}(0)$, there exist $c_1 < 0 < c_2$ 
  such that $\exp(tx)g\in G^{se}(0)$ for all $t\in (c_1,c_2)$ and $\exp(c_j x)g
  \notin G^{se}(0)$. In particular
\[   \lim\limits_{t\searrow c_1}\tau(\exp(tx)g)=-\infty
                               \quad \mbox{ and } \quad 
\lim\limits_{t\nearrow c_2}\tau(\exp(tx)g)=\infty.\] 
\end{lem}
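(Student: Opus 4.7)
The plan is to prove the lemma in two stages: first establish that $c_1, c_2$ are finite using the Guichardet--Wigner quasimorphism $f_{GW}$, and then extract the blow-up of $\tau$ by a symmetric splitting $\tau = \tau_- + \tau_+$ into lower- and upper-boundary contributions. Throughout I assume $G$ is simply connected (Proposition~\ref{prop:3.11}(b) reduces the general case).

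Let $U := \{t \in \R : \exp(tx)g \in G^{se}(0)\}$, which is open and contains $0$, and let $(c_1,c_2)$ be its connected component of $0$. The key claim for finiteness is $f_{GW}(\exp x) > 0$. Writing the Jordan decomposition $x = x_e + x_n$ of commuting elliptic and nilpotent parts, both lie in $W$ (as used in the proof of Proposition~\ref{prop:4.14}), and non-nilpotence of $x$ forces $x_e \neq 0$. Some conjugate $\Ad(g_0)x_e$ lies in $\fk$ and hence in $W_\fk := W \cap \fk$ by $\Ad(G)$-invariance of $W$, so Proposition~\ref{prop:fgw2}(b) gives $f_{GW}(\exp x_e) = p_{\fz(\fk)}(\Ad(g_0)x_e)$. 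This is a positive multiple of $z$ because $W_\fk \cap [\fk,\fk] = \{0\}$: the compact group $\Ad(K)$ has no non-zero fixed vectors in the semisimple algebra $[\fk,\fk]$ (since $\fz(\fk) \cap [\fk,\fk] = \{0\}$), and a standard averaging-plus-Carath\'eodory argument shows no non-trivial $\Ad(K)$-invariant pointed convex cone can exist there. Combining commutativity of $x_e$ and $x_n$, the vanishing $f_{GW}(\exp x_n)=0$ (Proposition~\ref{prop:fgw2}(a)), and homogeneity of $f_{GW}$ with the quasimorphism defect yields $f_{GW}(\exp x) = f_{GW}(\exp x_e) > 0$. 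Then $f_{GW}(\exp(nx)g) = n f_{GW}(\exp x) + f_{GW}(g) + O(1) \to \pm\infty$ as $n \to \pm\infty$ through integers, but $f_{GW}$ is bounded on $G^{se}(0)$ by Proposition~\ref{prop:timefun}; hence $\exp(tx)g$ leaves $G^{se}(0)$ for $|t|$ large, forcing $-\infty < c_1 < 0 < c_2 < \infty$.

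For the limit behavior, I introduce the functions
\[
\tau_-(\exp \xi) := \sum_{\alpha \in \Delta_p^+} \ln(i\alpha(\xi)),
\qquad
\tau_+(\exp \xi) := -\sum_{\alpha \in \Delta_p^+} \ln(2\pi - i\alpha(\xi))
\]
on $\exp(\ft^{se}(0))$, extended to $G^{se}(0)$ by conjugation invariance (well-defined since $\cW_\fk$ permutes $\Delta_p^+$). The computation in Lemma~\ref{lem1} applies verbatim to each summand to show both $\tau_\pm$ are strictly monotone along causal curves, and one has the one-sided bounds $\tau_- \leq |\Delta_p^+| \ln(2\pi)$ and $\tau_+ \geq -|\Delta_p^+| \ln(2\pi)$ on $G^{se}(0)$. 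Hence as $t \nearrow c_2$, the upper bound and monotonicity give $\tau_-(\gamma(t)) \to L_- \in \R$, while $\tau_+(\gamma(t))$ has a limit $L_+ \in (-\infty,+\infty]$. Supposing $L_+ < \infty$, I write $\gamma(t) = m(t) k(t) m(t)^{-1}$ via Proposition~\ref{prop:2.11}(b) (with $m(t) \in \exp(\fp)$ in the simple hermitian semisimple case) and $k(t) = \exp(\Ad(u(t))\eta(t))$ with $\eta(t) \in \ft^{se}(0)$ a Weyl-group representative; boundedness of $\tau_\pm(\gamma(t)) = \tau_\pm(\exp \eta(t))$ then forces $i\alpha(\eta(t))$ uniformly away from both $0$ and $2\pi$ for each $\alpha \in \Delta_p^+$, so $\eta(t)$ stays in a compact $C \subseteq \ft^{se}(0)$ and consequently $k(t) \in \exp(\Ad(K) \cdot C)$ lies in a compact subset of $K^{se}(0)$. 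The Properness Theorem~\ref{properness:theorem} then makes $\{p \exp(\Ad(K)\cdot C) p^{-1} : p \in \exp(\fp)\}$ closed in $G$ and contained in $G^{se}(0)$, so the limit $\gamma(c_2)$ would lie in $G^{se}(0)$, contradicting the maximality of $c_2$. Hence $L_+ = +\infty$ and $\tau(\gamma(t)) \to +\infty$. The case $t \searrow c_1$ is symmetric: $\tau_+$ now has the finite limit, and the Properness argument forces $\tau_-(\gamma(t)) \to -\infty$, so $\tau(\gamma(t)) \to -\infty$.

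The main obstacle is the positivity claim $f_{GW}(\exp x) > 0$, which hinges on the Jordan decomposition respecting $W$ together with the Lie-algebraic triviality of $\Ad(K)$-invariant pointed cones in $[\fk,\fk]$. Once this is in hand, the monotone-convergence and Properness-based compactness argument for the blow-up of $\tau$ is technically routine.
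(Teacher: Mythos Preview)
Your argument is correct and follows essentially the same two-step strategy as the paper: first use the Guichardet--Wigner quasimorphism $f_{GW}$ (bounded on $G^{se}(0)$ by Proposition~\ref{prop:timefun} but unbounded along $t\mapsto\exp(tx)g$ via the Jordan decomposition $x=x_e+x_n$) to force $c_1,c_2$ finite, and then use the Properness Theorem~\ref{properness:theorem} to show that $\psi(\exp(tx)g)$ must escape every compact subset of $K^{se}(0)$, forcing $\tau$ to blow up.

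There are two minor organizational differences worth noting. For the positivity $f_{GW}(\exp x_e)>0$, the paper invokes the explicit formula from \cite{Ha10} for $f_{GW}$ on $\exp(\Ad(G)\ft)$, whereas you deduce it from the structural fact $W_\fk\cap[\fk,\fk]=\{0\}$; your route avoids the external reference but needs the additional observation that $p_{\fz(\fk)}(W_\fk)\subeq\R_{\ge 0}z$ (which follows from $\cW_\fk$-invariance of $W_\ft^{\max}$ by averaging). For the divergence of $\tau$, the paper argues more tersely that $\psi(\gamma(t))$ escapes compacts in $K^{se}(0)$ and that $K^{se}(0)$ has compact closure in $K$, implicitly using the monotonicity of $\tau$ from Lemma~\ref{lem1} to fix the sign of the divergence. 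Your explicit splitting $\tau=\tau_-+\tau_+$ into separately monotone pieces with one-sided bounds makes this mechanism transparent and yields the correct sign directly. Both variants rest on the same Properness-based contradiction, so the approaches are equivalent.
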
 

\begin{proof} 
  By \cite[Cor.~B.2]{NOe22} each $x\in W$ has a
  Jordan decomposition $x=x_e+x_n$, where $x_e\in W$ is elliptic and $x_n\in W$ nilpotent.
Since $x \not= x_n$ by assumption, we have $x_e\neq 0$.
  
It follows that $x_e \in \mathrm{Ad}(G)\mathfrak{t}\cap W$.
Pick $g\in G$ with $\Ad(g)x_e \in \ft$.
Then there exists $\alpha\in \Delta_p^+$ with $\alpha(\mathrm{Ad}(g)x_e)>0$.
It follows from Proposition~\ref{prop:3.11} that the universal covering map $q_G \: \tilde G \to G$ maps $\tilde G^{se}(0)$ diffeomorphically to   $G^{se}(0)$. 
Recall the quasimorphism $f_{GW} \: \tilde G \to Z(\tilde K)_e \cong \R$.
By \cite[Prop.~3.3.4]{Ha10} (cf.\ also Subsection~\ref{subsec:GW}), on $\exp \left(\mathrm{Ad}(G)\mathfrak{t}\right)$, this quasimorphism is given by 
\[ f_{GW}\left(\exp\left(\mathrm{Ad}(g)y\right)\right)
  =\frac{1}{|\Delta_p^+|}\sum\limits_{\alpha\in\Delta_p^+}i\alpha(y)
  \quad \mbox{ for } \quad y \in \ft.\]

Note that $f_{GW}$ is bounded on $G^{se}(0)$ since there it coincides with the function $f$ from Proposition~\ref{prop:timefun}.
On the other hand $f_{GW}(\exp(t \Ad(g)x_e))$ is by the above formula unbounded from above and below and increasing in $t$.
The quasimorphism property and the fact that $f_{GW}(\exp(tx_n))=0$ imply that $f_{GW}(\exp(tx)g)=f_{GW}(\exp(tx_e)\exp(tx_n)g)$ is unbounded from above and from below in~$t$.
Therefore $\exp(tx)g$ leaves $G^{se}(0)$ for finite negative and finite positive $t$.
It follows from Theorem~\ref{properness:theorem} that
$\psi(\exp(tx)g)$ leaves every
compact subset in $K^{se}(0)$ for finite positive and negative $t$.
As~$K^{se}(0)$ has compact closure in $K$, Theorem \ref{thm:conncomp} implies that
$\tau(\psi(\exp(tx)g))$ and hence $\tau(\exp(tx)g)$ diverge.
\end{proof}

\begin{lem}\label{lem3}
Let $g,h\in G^{se}(0)$ with $h\in I^+(g)$.
Then there exists $x\in W$ with $\exp(x)g=h$ and $\exp(sx)g
\in G^{se}(0)$ for all $s\in [0,1]$.
\end{lem}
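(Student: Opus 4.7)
The plan is to use a connectedness argument along a timelike curve joining $g$ to $h$. The hypothesis $h\in I^+(g)$ provides a smooth timelike curve $\gamma\colon [0,1]\to G^{se}(0)$ with $\gamma(0)=g$ and $\gamma(1)=h$. I would consider the set
\[ A:=\bigl\{t\in [0,1]\: \exists\, x_t\in W \text{ with } \exp(x_t)g=\gamma(t) \text{ and } \exp(sx_t)g\in G^{se}(0)\ \forall s\in[0,1]\bigr\}, \]
which trivially contains $t=0$ (take $x_0=0$) and, near $t=0$, contains a small half-interval: the tangent vector $\gamma'(0)g^{-1}\in W^\circ$, so the ordinary inverse function theorem at $0\in\g$ (where $d\exp(0)=\id$) produces for each sufficiently small $\epsilon>0$ an $x_\epsilon\in W^\circ$ with $\exp(x_\epsilon)g=\gamma(\epsilon)$, and the segment $\{\exp(sx_\epsilon)g\colon s\in[0,1]\}$ lies in $G^{se}(0)$ by openness. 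The goal is then to show that $A$ is both closed and open in $[0,1]$, hence all of $[0,1]$, and in particular contains $1$.

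For the closedness of $A$, let $t_n\in A$ with witnesses $x_n\in W$ and $t_n\to t^*\in[0,1]$. Since $x_n\in W$, Lemma~\ref{lem1} gives that $\tau$ is increasing along $s\mapsto \exp(sx_n)g$, so $\tau(g)\le \tau(\exp(sx_n)g)\le \tau(\gamma(t_n))$ for all $s\in[0,1]$; as $\gamma(t_n)\to\gamma(t^*)\in G^{se}(0)$ and $\tau$ is continuous, this is uniformly bounded in $n$. I would then argue boundedness of $(x_n)$ by contradiction: writing the Jordan decomposition $x_n=x_n^e+x_n^n$ with both parts in $W$ (\cite[Cor.~B.2]{NOe22}), if $\|x_n\|\to\infty$, normalize and pass to a subsequence with $x_n/\|x_n\|\to y\in W\setminus\{0\}$; then Lemma~\ref{lem2} applied along the ray $t\mapsto \exp(ty)g$ forces $\tau(\exp(x_n)g)$ to diverge, contradicting the uniform bound. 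After extracting a convergent subsequence $x_n\to x^*\in W$, we obtain $\exp(x^*)g=\gamma(t^*)$; using Lemma~\ref{lem2} once more, together with the uniform $\tau$-bound transferred to the limit path, one excludes the exponential curve $\exp(sx^*)g$ from leaving $G^{se}(0)$, since any such exit would force $\tau$ to diverge. Hence $t^*\in A$.

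The main obstacle is the openness of $A$. Fix $t_0\in A$ with witness $x_0\in W$. By compactness of $[0,1]$ and openness of $G^{se}(0)$, the condition $\exp(sx)g\in G^{se}(0)$ for all $s\in[0,1]$ is stable under small perturbations of $x$ near $x_0$. What remains is to solve $\exp(x)g=\gamma(t)$ for $x$ near $x_0$ as $t$ varies near $t_0$, and this is where the technical difficulty lies: it requires the map $x\mapsto \exp(x)g$ to be a local submersion at $x_0$, equivalently $\mathrm{Spec}(\ad x_0)\cap 2\pi i\Z\subseteq\{0\}$. To overcome this, I would track $x_t$ forward from the small-$t$ regime where it lies in $\g^{se}(0)$ (so $\exp$-regular by Lemma~\ref{lem:expregx}, using $\Delta_k\subseteq \Delta_p^+-\Delta_p^+$ for simple hermitian $\g$) via the ODE
\[ \frac{dx_t}{dt}=\frac{\ad x_t}{1-e^{-\ad x_t}}\,\Ad(g^{-1})\Ad(\exp(-x_t))\bigl(\gamma'(t)\gamma(t)^{-1}\bigr), \]
obtained by differentiating $\exp(x_t)g=\gamma(t)$. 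This ODE has a smooth solution as long as $x_t\in\g^{se}(0)$; combined with the closedness step, which prevents $x_t$ from escaping $\g^{se}(0)$ as long as $\gamma(t)$ stays in $G^{se}(0)$, the solution extends to $[0,1]$, and $x:=x_1$ is the desired element. The hard part is the bookkeeping ensuring $x_t\in\g^{se}(0)$ throughout, which I would establish using the $\tau$-monotonicity and Lemma~\ref{lem2} to preclude $x_t$ from approaching a locus where $d\exp$ degenerates.
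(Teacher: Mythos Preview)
Your connectedness strategy has two genuine gaps.

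First, the closedness argument invokes Lemma~\ref{lem2} for the normalized limit $y=\lim x_n/\|x_n\|\in W\setminus\{0\}$, but Lemma~\ref{lem2} requires $y$ to be \emph{non-nilpotent}, and nothing prevents $y$ from lying in the nilpotent part of $\partial W$. The extension to nilpotent directions (Lemma~\ref{lem2b} in the paper) is established only \emph{after} Lemma~\ref{lem3}, using it as input, so appealing to it here is circular. The same obstruction recurs when you argue that $\exp(sx^*)g$ cannot exit $G^{se}(0)$: $x^*$ itself may be nilpotent.

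Second, the openness/ODE step is not justified at the crucial point. For the ODE to be well-posed you need $x_t\in\regexp(\g)$, and you propose to guarantee this by keeping $x_t\in\g^{se}(0)$; but your justification (``combined with the closedness step'') conflates sequential closedness of $A\subseteq[0,1]$ with the statement that the continuous path $t\mapsto x_t$ does not leave $\g^{se}(0)$. The latter is equivalent to $\gamma(t)g^{-1}\in G^{se}(0)$ for all $t$, which is exactly the content of the lemma; you have not given an independent mechanism preventing the ODE solution from reaching $\partial\g^{se}(0)$ before any $\tau$-bound is violated.

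The paper avoids both issues by not tracking a family $x_t$ at all. It first proves directly that $hg^{-1}\in G^{se}(0)$, using the two-parameter family $H(s,t)=\gamma(s)\exp(-ty)$ with a \emph{fixed} $y\in\g^{se}(0)$ satisfying $\exp(y)=g$; since such $y$ is elliptic and nonzero, Lemma~\ref{lem2} applies without restriction. A first-exit argument in the $(s,t)$-square, combined with the time function~$\tau$, produces a contradiction if $H(1,1)=hg^{-1}\notin G^{se}(0)$. Once $hg^{-1}\in G^{se}(0)$ is known, the diffeomorphism $\exp\colon\g^{se}(0)\to G^{se}(0)$ (Proposition~\ref{prop:3.11}) yields $x\in\g^{se}(0)$, and the intermediate points follow by rewriting $\exp(sx)g=\exp((s-1)x)h$ and reapplying the first step to the pair $(\exp((1-s)x),h)$.
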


\begin{proof} (a) We first show that $hg^{-1}\in G^{se}(0)$. 
Suppose $hg^{-1}\notin G^{se}(0)$. 
This will lead to a contradiction to the properties of the time function $\tau$.

Let $\gamma(s)$ be a timelike curve in $G^{se}(0)$ with $\gamma(0)=g$ and $\gamma(1)=h$.
Choose $y\in \mathfrak{g}^{se}(0)$ with $\exp(y)=g$.
Then $\exp(ty)\in G^{se}(0)$ for all $t\in (0,1]$. 

We consider the continuous map 
\[ H\colon [0,1]\times [0,1]\rightarrow G, \quad 
H(s,t) := H^t(s) := H_s(t) := \gamma(s) \exp(-ty).\] 
Then
\[ H(s,0)=\gamma(s) \quad \mbox{ and } \quad H(1,1)=hg^{-1} \not\in G^{se}(0).\] 
Since $G^{se}(0)$ is open, the set
\[  I := \{ t \in [0,1] \: H^t([0,1]) \subeq G^{se}(0) \} \]
contains $0$ and is open, so that 
\[ t_0:=\min([0,1] \setminus I) \in (0,1].\]
From $H(0,t_0)=\exp((1-t_0)y)\in G^{se}(0)\cup\{e\}$, we derive that
\[ s_0 := \min \{s \in (0,1] \: H(s,t_0)\not\in G^{se}(0) \} > 0.\]
In the case that $t_0=1$, we use the fact
that $H^{t_0}(s)$ is timelike and any timelike curve starting at $e$ immediately enters $G^{se}(0)$. 

By Lemma \ref{lem2} the function $\tau$ goes to $-\infty$
 for $t \to t_0-$ 
along $H_{s_0}(t)$ and is increasing along $H^{t_0}(s)$.
Thus we can pick $\epsilon> 0$ small enough such that $\tau(H(s_0,t_0-\epsilon))< \tau (H(s_0-\epsilon,t_0))$.
On the other hand $H(s_0-\epsilon,t_0-\epsilon)\in I^+(H(s_0-\epsilon,t_0))$ and $H(s_0,t_0-\epsilon)\in I^+(H(s_0-\epsilon,t_0-\epsilon))$.
In particular $H(s_0,t_0-\epsilon)\in I^+(H(s_0-\epsilon,t_0))$.
Here we use the minimality of $t_0,s_0$ and the causality properties of $H^t,H_s$.
 
This yields the desired contradiction to $\tau$ being a time function
and we conclude that $hg^{-1} \in G^{se}(0)$. 

\nin (b) Choose $x\in\mathfrak{g}^{se}(0)$ with $\exp(x)=hg^{-1}$.
Then $\exp(x)g=h$.
For $s\in [0,1)$ we have that 
$$\exp(sx)g=\exp((s-1)x)\exp(x)g=\exp((s-1)x)h.$$
Since $h\in I^+(\exp((1-s)x))$ for all $s\in [0,1)$,
part (a) yields  $\exp(sx)g\in G^{se}(0)$ for $s \in [0,1]$.
\end{proof}

We are now ready to prove Lemma \ref{lem2} also for nilpotent elements
  $x\in W$. 

\begin{lem}\label{lem2b}
  For any $x\in W$ and $g\in G^{se}(0)$ there exist $c_1 < 0 < c_2$ 
  such that $\exp(tx)g\in G^{se}(0)$ for all $t\in (c_1,c_2)$
  and $\exp(c_j x)g
  \notin G^{se}(0)$. In particular
\[   \lim\limits_{t\searrow c_1}\tau(\exp(tx)g)=-\infty
                               \quad \mbox{ and } \quad 
\lim\limits_{t\nearrow c_2}\tau(\exp(tx)g)=\infty.\] 
\end{lem}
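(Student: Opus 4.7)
The strategy is to bootstrap the non-nilpotent case (Lemma~\ref{lem2}) to the nilpotent case by an approximation argument, and then recover the divergence of $\tau$ exactly as in Lemma~\ref{lem2}. If $x \in W$ is non-nilpotent, apply Lemma~\ref{lem2} directly. Otherwise, assume $x \in W$ is nilpotent with $x \neq 0$ (for $x = 0$ the conclusion is either vacuous or trivial by taking the curve to be constant).

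Let $z \in \fz(\fk) \cap W^\circ$ be the distinguished central elliptic element, and set $u_\epsilon := x + \epsilon z \in W^\circ$ for $\epsilon > 0$. Since $W^\circ = \Ad(G)(W_\ft^{\max})^\circ$ is contained in $\Ad(G)\ft$, it consists entirely of elliptic elements (with regular centralizer), so each $u_\epsilon$ is non-nilpotent. Lemma~\ref{lem2} applied to $u_\epsilon$ then furnishes finite exit times $c_1(\epsilon) < 0 < c_2(\epsilon)$ such that $\exp(tu_\epsilon)g \in G^{se}(0)$ for $t \in (c_1(\epsilon), c_2(\epsilon))$ and $\exp(c_j(\epsilon)u_\epsilon)g \notin G^{se}(0)$. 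I would aim to establish the uniform bounds $\limsup_{\epsilon \to 0^+} c_2(\epsilon) < \infty$ and $\liminf_{\epsilon \to 0^+} c_1(\epsilon) > -\infty$; once these are in hand, the existence of $c_1 < 0 < c_2$ for the original curve follows by passing to the limit in $\exp(c_j(\epsilon)u_\epsilon)g \notin G^{se}(0)$ and using the closedness of $G \setminus G^{se}(0)$ together with the continuous dependence $\exp(tu_\epsilon)g \to \exp(tx)g$ uniformly on compact $t$-intervals.

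The main obstacle is the uniform boundedness of the exit times $c_j(\epsilon)$ as $\epsilon \to 0^+$. I would attempt this by coupling Lemma~\ref{lem3} with a push-up argument: at each $t$ for which $\exp(tu_\epsilon)g \in G^{se}(0)$, and for $\delta > 0$ sufficiently small, the element $h := \exp(\delta z)\exp(tu_\epsilon)g$ lies in $I^+(g) \cap G^{se}(0)$ (timelike, by concatenating a causal curve with a timelike one), and Lemma~\ref{lem3} supplies $v \in \g^{se}(0) \subseteq W^\circ$ with $\exp(v) = \exp(\delta z)\exp(tu_\epsilon)$. Since $\exp(v) \in G^e$, the operator
\[
\Ad(\exp v) \;=\; e^{\delta \ad z}\, e^{t \ad u_\epsilon}
\]
must be elliptic in $\Aut(\g)$, so all its eigenvalues lie on the unit circle and hence $|\tr(\Ad(\exp v)^m)| \leq \dim \g$ for every $m \geq 1$. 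As $e^{t \ad u_\epsilon}$ involves $e^{t \ad x}$ which is polynomial in $t$ of degree equal to the nilpotency index $N \geq 1$ of $\ad x$, the trace invariants are polynomials in $t$ whose leading coefficients can be read off explicitly. The hard analytic point is to verify that, for our specific $g \in G^{se}(0)$ and for $y = z$, at least one such leading coefficient is non-zero and stays bounded away from zero as $\epsilon, \delta \to 0^+$; this would force $t$ to be bounded, uniformly in $\epsilon$, and yield the contradiction. Checking this non-vanishing in general hermitian type (as opposed to the model $\fsl_2(\R)$ computation) is the technically delicate part, and is where the proof must exploit both $\ad x \neq 0$ and the non-centrality of $g$ inherent in $g \in G^{se}(0)$.

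Finally, the divergence $\tau(\exp(tx)g) \to \pm \infty$ as $t \to c_j^\pm$ follows verbatim from the last paragraph of the proof of Lemma~\ref{lem2}: the finite exit from $G^{se}(0)$ forces $\psi(\exp(tx)g)$ to leave every compact subset of $K^{se}(0)$ (by Theorem~\ref{properness:theorem}), and since $\overline{K^{se}(0)}$ is compact in $K$ while $\tau_K$ is proper on $K^{se}(0)$ (going to $\pm\infty$ as any non-compact root approaches $0$ or $2\pi$), Theorem~\ref{thm:conncomp} gives $\tau(\psi(\exp(tx)g))$, and hence $\tau(\exp(tx)g)$, diverging to $\pm\infty$ at the endpoints $c_1$ and $c_2$.
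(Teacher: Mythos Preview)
Your approach diverges from the paper's, and the central step is left unproven. The paper does not approximate the nilpotent $x$ by interior elements $u_\epsilon = x + \epsilon z$; instead, after using Lemma~\ref{lem3} to reduce to the special base point $g = \exp(\epsilon z)$, it invokes the classification of nilpotent elements of $W$ from \cite{HNO94} to find a subalgebra $\fl \cong \fz(\fl)\oplus\fsl_2(\R)^{\oplus r}$ containing $z$ and (after a $K$-conjugation) $x$. This reduces the question to an explicit matrix computation in $\SL_2(\R)$, where one checks directly that $\tr(\exp(tx)\exp(\epsilon z))$ is affine in $t$ with nonzero slope, hence leaves $[-2,2]$ in finite time.

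Your trace-polynomial idea is in the right spirit, but as written it does not close. First, for $\epsilon>0$ the element $u_\epsilon$ is elliptic, so $e^{t\ad u_\epsilon}$ is quasi-periodic in $t$, not polynomial; the polynomial growth you want only appears in the limit $\epsilon\to 0$, and you give no uniform control linking the two regimes. Second, even if one works directly with the nilpotent $x$ (dropping $u_\epsilon$), the leading coefficient $\tr\big(e^{\delta\ad z}(\ad x)^N\big)$ tends to $\tr((\ad x)^N)=0$ as $\delta\to 0$, while the $\delta$ you are allowed to use (to keep $\exp(\delta z)\exp(tx)g$ inside $G^{se}(0)$) may have to shrink as $t$ grows; you provide no argument decoupling these. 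This is precisely the ``hard analytic point'' you flag, and it is not a detail: it is the entire content of the lemma in the nilpotent case. The structural reduction to $\fsl_2(\R)$ in the paper's proof is what circumvents this difficulty. Your final paragraph on the divergence of $\tau$ is correct and matches the paper.
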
 

\begin{proof} 
As in the proof of Lemma \ref{lem2} we look at the Jordan decomposition $x=x_n+x_e$.
It remains to deal with the case where $x=x_n$ is nilpotent.
We show the existence of $0<c_2<\infty$, the existence of $c_1$ can be shown analogously.

Let $\epsilon>0$ be small enough such that $g\in I^+(\exp(\epsilon z))$.
It follows from Lemma \ref{lem3} that the curve $\exp(tx)\exp(\epsilon z)$ is contained in $G^{se}(0)$ as long as $\exp(tx)g$, i.e., we can assume that $g=\exp(\epsilon z)$.

The classification of nilpotent elements in $W$
(see Theorem~III.9 and its proof in \cite{HNO94}) implies the existence of a
subalgebra $\fl\subset\fg$ with $z\in \fl$
and $\fl=\fz(\fl)\oplus[\fl,\fl]\cong \fz(\fl)\oplus \fsl_2(\R)^{\oplus r}$
and some $k \in K$ with $\Ad(k)x \in \fl$.
  As $\Ad(k)$  fixes $\exp(\epsilon z)$, we
can assume that $x\in [\fl,\fl]\cong (\fsl_2(\R))^r$.
Writing $z=z_1+z_2$, where $z_1\in \fz(\fl)$ and $z_2\in [\fl,\fl]$, the curve $\exp(tx)\exp(\epsilon z)=\exp(tx)\exp(\epsilon z_2)\exp(\epsilon z_1)$ is stably elliptic in $L := \la \exp\fl \ra$ 
if and only if $\exp(tx)\exp(\epsilon z_2)$ is stably elliptic.
As $G^{se}(0)$ is the same in locally isomorphic groups (Proposition~\ref{prop:3.11})
and element in $L$ that are not stably elliptic in $L$ are
  not stably elliptic in $G$,  it suffices to prove the lemma for the 
group $\SL_2(\R)^r$. As the set of stably elliptic elements adapts
  to the product structure, we may further assume $r = 1$.
We can therefore assume that $\exp(tx)\exp(\epsilon z)\subset \SL_2(\R)$.
As $z$ is elliptic, up to conjugation and scaling, we may assume that 
\[ z = \pmat{0 & -1 \\ 1 & 0},\quad
  x = \pmat{ 0 & -1 \\ 0 & 0},\]
so that
\[  \exp(tx)\exp(\eps z)
  = \pmat{ 1 & -t \\ 0 & 1} \pmat{ \cos \eps & -\sin \eps \\ \sin \eps & \cos \eps}
  =  \pmat{ \cos \eps-t\sin \eps &  -\sin \eps -t\cos \eps \\
    \sin \eps &  \cos \eps}.\]
The trace of an elliptic element $g \in \SL_2(\R)$
with eigenvalues $\cos(\theta) \pm i \sin (\theta)$ is
$2 \cos(\theta) \in [-2,2]$, but
\[ \tr(\exp(tx)\exp(\epsilon z))  = 2 \cos \eps - t \sin \eps \]
tends to $-\infty$ for $t \to \infty$.
Hence the assertion holds in $\SL_2(\R)$.
The divergence of $\tau$ along $\exp(tx)\exp(\eps z)$
follows analogously to Lemma~\ref{lem2}. 
\end{proof}

For a different proof of Lemma~\ref{lem2b} for the group
$\SL_2(\R)\cong \Sp_2(\R)$ we refer to \cite{He22}.

Lemma \ref{lem2b} allows to prove Lemma \ref{lem3} for every $h\in J^+(g)$.

\begin{lem}\label{lem3b}
Let $g,h\in G^{se}(0)$ with $h\in \overline{J^+(g)}$.
Then there exists $x\in W$ with $\exp(x)g=h$ and $\exp(sx)g\in G^{se}(0)$ for all $s\in [0,1]$.
In particular $J^+(g)$ is closed in $G^{se}(0)$
for all $g$.
\end{lem}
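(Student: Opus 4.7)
The plan is to approximate $h\in\overline{J^+(g)}$ by timelike-future points in $G^{se}(0)$, apply Lemma \ref{lem3} to each approximant, and extract a limit controlled by the time function $\tau$. Pick $h_n\to h$ with $h_n\in J^+(g)$, and set $\tilde h_n:=h_n\exp(\varepsilon_n z)$ with $\varepsilon_n\searrow 0$ chosen small enough that $\tilde h_n\in G^{se}(0)$ (possible because $G^{se}(0)$ is open and contains $h$). I claim $\tilde h_n\in I^+(g)$. Given a causal curve $\gamma_n$ from $g$ to $h_n$, the modified curve $t\mapsto\gamma_n(t)\exp(t\varepsilon_n z)$ has left-translated velocity
\[
\Ad(\exp(-t\varepsilon_n z))\bigl(\gamma_n(t)^{-1}\gamma_n'(t)\bigr)+\varepsilon_n z,
\]
which lies in $W+W^\circ=W^\circ$ by $\Ad(G)$-invariance of $W$ and $z\in W^\circ$; hence this is a timelike curve from $g$ to $\tilde h_n$.

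By Lemma \ref{lem3} there exist $x_n\in W$ with $\exp(x_n)g=\tilde h_n$ and $\exp(sx_n)g\in G^{se}(0)$ for every $s\in[0,1]$. The key step is bounding $(x_n)$. Suppose $\|x_n\|\to\infty$ along a subsequence; passing to a further subsequence, $u_n:=x_n/\|x_n\|\to u\in W$ with $\|u\|=1$. Lemma \ref{lem2b} applied to the direction $u$ yields $c_2(u)>0$ with $\tau(\exp(tu)g)\nearrow+\infty$ as $t\nearrow c_2(u)$, so I fix $t_0<c_2(u)$ with $\tau(\exp(t_0 u)g)>\tau(h)+1$. For $n$ large, $s_n:=t_0/\|x_n\|\in[0,1]$ and monotonicity of $\tau$ along the causal curve $s\mapsto\exp(sx_n)g$ (Lemma \ref{lem1}) gives
\[
\tau(\exp(t_0 u_n)g)=\tau(\exp(s_n x_n)g)\le\tau(\exp(x_n)g)=\tau(\tilde h_n).
\]
Letting $n\to\infty$, the left-hand side tends to $\tau(\exp(t_0 u)g)>\tau(h)+1$ by continuity of $\tau$ on $G^{se}(0)$, while the right-hand side tends to $\tau(h)$: contradiction. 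Hence $(x_n)$ is bounded; a convergent subsequence yields $x\in W$ with $\exp(x)g=h$.

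To verify $\exp(sx)g\in G^{se}(0)$ for every $s\in[0,1]$, apply Lemma \ref{lem2b} to $(g,x)$: there is an interval $(c_1(x),c_2(x))\ni 0$ on which $\exp(\cdot\,x)g$ stays in $G^{se}(0)$, with $\tau\to\pm\infty$ at the endpoints. Suppose $c_2(x)\le 1$; choose $t\in(0,c_2(x))$ close enough to $c_2(x)$ that $\tau(\exp(tx)g)>\tau(h)+1$. But $\exp(tx_n)g\to\exp(tx)g$ in $G^{se}(0)$, and monotonicity gives $\tau(\exp(tx_n)g)\le\tau(\tilde h_n)\to\tau(h)$; passing to the limit yields $\tau(\exp(tx)g)\le\tau(h)$, a contradiction. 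Thus $c_2(x)>1$, and since $c_1(x)<0$ by Lemma \ref{lem2b}, $[0,1]\subset(c_1(x),c_2(x))$. The final assertion is immediate: $s\mapsto\exp(sx)g$ is a smooth causal curve in $G^{se}(0)$ from $g$ to $h$, so $h\in J^+(g)$, proving $\overline{J^+(g)}\cap G^{se}(0)\subseteq J^+(g)$.

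The main obstacle is that $hg^{-1}$ need not lie in $G^{se}(0)$ (in contrast to Lemma \ref{lem3}(a)), so one cannot simply invert the exponential to read off $x$. The time function $\tau$ supplies the missing compactness: its divergence at the boundary of $G^{se}(0)$ along one-parameter causal curves, combined with its boundedness along the causal curves $s\mapsto\exp(sx_n)g$, both forces the sequence $(x_n)$ to be bounded and prevents the limit curve $s\mapsto\exp(sx)g$ from exiting $G^{se}(0)$ at any intermediate $s$.
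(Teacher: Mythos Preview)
Your proof is correct and takes a genuinely different route from the paper's. Both begin by approximating $h$ by points of $I^+(g)$ and invoking Lemma~\ref{lem3}, but the paper then appeals to general causal-geometry results from \cite{Mi19} (the identity $\overline{J^+(g)}=\overline{I^+(g)}$ for locally Lipschitz cone structures, and the limit curve theorem \cite[Thm.~2.14]{Mi19}) to conclude that the limit curve $t\mapsto\exp(tx)g$ lies in $\overline{J^-(h)}$, whence $\tau$ stays below $\tau(h)$. You instead extract everything from the time function~$\tau$ directly: if $\|x_n\|\to\infty$ along a subsequence, then on the limit direction $u$ a fixed parameter $t_0<c_2(u)$ already forces $\tau(\exp(t_0 u)g)>\tau(h)+1$, while monotonicity along $\exp(s x_n)g$ and continuity of $\tau$ give the opposite inequality in the limit; the same mechanism rules out $c_2(x)\le 1$. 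Your argument is more self-contained (it avoids the limit curve theorem entirely), at the price of slightly more bookkeeping; the paper's argument is shorter once that machinery is available.

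One point to tighten: when you perturb $\gamma_n$ to $t\mapsto\gamma_n(t)\exp(t\varepsilon_n z)$, you check that the velocity lies in $W^\circ$ but not that the curve remains in $G^{se}(0)$, which is what $I^+(g)$ requires. This is easily repaired: $\gamma_n([0,1])$ is compact in the open set $G^{se}(0)$, so by shrinking $\varepsilon_n$ the perturbed curve stays inside. (Alternatively you could sidestep the perturbation by quoting $\overline{J^+(g)}=\overline{I^+(g)}$ and choosing $h_n\in I^+(g)$ directly, as the paper does.)
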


\begin{proof}

In view of Lemma~\ref{lem3}, it remains to consider the case $h\in \overline{J^+(g)}\setminus I^+(g)$.

Since the causal structure on $G$ induced by the invariant cone $W$ is locally Lipschitz in the sense of \cite{FS11, Mi19}, \cite[Thm.~2.7]{Mi19} implies
that $\overline{J^+(g)}= \overline{I^+(g)}$.

Thus we can pick a sequence $h_n\in I^+(g)$ with $h_n\rightarrow h$.
Let $D$ be the intersection of the cone $W\subset \mathfrak{g}$
with the unit sphere of some norm on $\g$. 
Using Lemma \ref{lem3} we can find a sequence $x_n$ in $D$ and a sequence $t_n$ such that $h_n=\exp(t_nx_n)g$.

By the compactness of $D$, passing to a subsequence, we may
  assume that $x_n \rightarrow x \in D$. Consider the curve $\eta(t):=\exp(tx)g$.
Now suppose that there is no $t$ that satisfies the assumptions of the
lemma, i.e., there is no $t$ such that $\exp(tx)g=h$ and such that $\exp(stx)g\in G^{se}(0)$ for all $s\in [0,1]$.
Lemma \ref{lem2b} implies that there exists some $t_0$ such that $\eta(t)\in G^{se}(0)$ for $t\in [0,t_0)$ and $\eta(t_0)\notin G^{se}(0)$.
Moreover we have $\lim\limits_{t\rightarrow t_0-}\tau(\eta(t))=\infty$.
In particular there exists
$t_c\in (0,t_0)$ such that $\tau(\eta(t_c))=c>\tau(h)$.

On the other hand the limit curve theorem for causal curves in locally Lipschitz causal structures \cite[Thm.~2.14]{Mi19} implies that $\eta(t)\subset \overline{J^-(h)}$ for all $t<t_0$, so that 
$\tau(\eta(t))\leq \tau(h)$ for all $t\in [0,t_0)$.

This contradicts $\tau$ being a time function on $G^{se}(0)$ as proved in Lemma \ref{lem1}.
Hence there exists a $t$ such that $tx$ satisfies the assumptions of the
lemma. 
\end{proof}

\begin{proof}[Proof of Theorem~\ref{thm1}]
  We follow the strategy in \cite{He22}, which treats the special case
  $G=\Sp_{2n}(\R)$. We have
  to show that for all $g,h\in G^{se}(0)$, $J^+(g)\cap J^-(h)$ is compact.

Let $g_k$ be a sequence in $J^+(g)\cap J^-(h)$.
As before, let $D$ be the intersection of $W$ with the unit sphere of some
norm on $\g$.
Applying Lemma \ref{lem3b} we can pick a sequence $x_k$ in $D$ and a sequence $t_k$ such that $g_k=\exp(t_kx_k)g$.
Since $D$ is compact, $x_k$ converges up to a subsequence to some $x\in D$.
By Lemma \ref{lem2b} we can pick a $c$ such that $\exp(tx)g\in G^{se}(0)$ for all $t\in[0,c)$ and $\exp(cx)g\notin G^{se}(0)$.
Note that $\tau(g_k)\leq \tau(h)$ for all $k$.

We claim that $\lim\sup t_k < c$.
  Otherwise, passing to a subsequence, may assume
  that $t_k \to c$, and this would imply that
  $\tau(\exp(tx)g)$ is bounded for $t \in [0,c)$; a contradiction.
Hence a subsequence of $(t_k)_{k \in \N}$
converges to some $t_{{\rm max}}\in [0,c)$ and in particular a
subsequence of $(g_k)_{k \in \N}$
converges to $\exp(t_{{\rm max}}x)g\in J^+(g)$.
Without loss of generality, we
assume that the sequence $(g_k)_{k \in \N}$ converges to $\exp(t_{{\rm max}}x)g$.

Using Lemma \ref{lem3b}, there exist sequences $y_k$ in $D$ and $s_k$ such that
$g_k=\exp(-s_ky_k)h$.
Repeating the above argument, we obtain that $g_k$ converges to
$\exp(t_{{\rm max}}x)g=\exp(-s_{{\rm max}}x)h
\in J^+(g)\cap J^-(h)$, which implies the desired compactness.
\end{proof}

\section{Perspectives}
\mlabel{sec:7}

\begin{prob}
It would be interesting to explore causal properties of the whole group $G$ in the case when there are no closed causal curves.
For example in \cite{ABP22} the authors construct a natural invariant
Lorentz--Finales metric on $\Sp_{2n}(\R)$ and its universal cover, as well as a family of time functions on $\tilde\Sp_{2n}(\R)$
using the invariant quasimorphism (might be possible on further groups using the GW quasimorphism?).
\end{prob}

\begin{prob} For which groups $G$ and which cones $W$, is
the   corresponding left invariant causal structure $(g.W)_{g \in G}$
  {\it causally simple} in the sense that $J^+(e)$ is closed,
  hence equal to $\overline{\langle \mathrm{exp}(W)\rangle}$? 
\end{prob}

\begin{prob} Suppose that $\g$ is of tube type. Is it true
  that
  \[ \partial S_W = \exp(\partial W)?\]
  According to \cite{HHL89}, this is true for $G = \tilde \SL_2(\R)$.

  Are the order interval
  \[ [e,s] = S_W \cap  s S_W^{-1} \]
  compact for every $s \in \partial S_W$? 
\end{prob}

\end{document}